\def\ovxkpoint{\ov{X}_\point{\hspace{-0.7ex}^k}}
\def\wtxkpoint{\wt{X}_\point{\hspace{-0.7ex}^k}}
\def\Xdotwtb{X_{\cdot \wedge T_{\wt{B}}}}
\def\IP{{\mathbb P}}
\def\IR{{\mathbb R}}
\def\IN{{\mathbb N}}
\def\IT{{\mathbb T}}
\def\IZ{{\mathbb Z}}
\def\n{\noindent}
\def\dsl{\textstyle\sum\limits}
\def\dis{\displaystyle}
\def\o{\omega}
\def\fr{\mbox{\footnotesize $\dis\frac{1}{2}$}}
\def\frvier{\mbox{\footnotesize $\dis\frac{1}{4}$}}
\def\ov{\overline}
\def\ve{\varepsilon}
\def\f{\footnotesize}
\def\r{\rightarrow}
\def\point{{\mbox{\large $.$}}}
\def\wh{\widehat}
\def\wt{\widetilde}
\def\cA{{\cal A}}
\def\cB{{\cal B}}
\def\cT{{\cal T}}
\def\cI{{\cal I}}
\def\cP{{\cal P}}
\def\cF{{\cal F}}
\def\cG{{\cal G}}
\def\cV{{\cal V}}
\def\cW{{\cal W}}
\newtheorem{theorem}{Theorem}[section]
\newtheorem{lemma}[theorem]{Lemma}
\newtheorem{proposition}[theorem]{Proposition}
\newtheorem{remark}[theorem]{Remark}
\begin{document}

\noindent

~

\bigskip
\begin{center}
{\bf ON THE DOMINATION OF RANDOM WALK ON A DISCRETE CYLINDER BY RANDOM INTERLACEMENTS}
\end{center}

%\vspace{1cm}
\begin{center}
Alain-Sol Sznitman
\end{center}

%\bigskip\bigskip

\bigskip
\begin{abstract}
We consider simple random walk on a discrete cylinder with base a large $d$-dimensional torus of side-length $N$, when $d \ge 2$. We develop a stochastic domination control on the local picture left by the random walk in boxes of side-length of order $N^{1- \ve}$, with $0 < \ve < 1$, at certain random times comparable to $N^{2d}$, in terms of the trace left in a similar box of $\IZ^{d+1}$ by random interlacements at a suitably adjusted level. As an application we derive a lower bound on the disconnection time $T_N$ of the discrete cylinder, which as a by-product shows the tightness of the laws of $N^{2d}/T_N$, for all $d \ge 2$. This fact had previously only been established when $d \ge 17$, in \cite{DembSzni08}.
\end{abstract}

\vfill 

\n
Departement Mathematik  \\
ETH Z\"urich\\
CH-8092 Z\"urich\\
Switzerland

\vfill
~
\newpage
\thispagestyle{empty}
~

\newpage
\setcounter{page}{1}

 \setcounter{section}{-1}
 
 \section{Introduction}
 \setcounter{equation}{0}
 
 The present article relates random walk on a discrete cylinder with base a $d$-dimensional torus, $d \ge 2$, of large side-length $N$ to the model of random interlacements recently introduced in \cite{Szni07a}. It develops a stochastic domination control on the trace left by the random walk in boxes of side-length of order $N^{1- \ve}$ in the cylinder at times which are comparable to $N^{2d}$, in terms of the trace left by random interlacements at a suitably adjusted level in a box of $\IZ^{d+1}$ with same side-length. As an application of this stochastic domination control and of estimates from \cite{SidoSzni09a} on the percolative character of the vacant set left by random interlacements at a small level $u$, we derive a lower bound on the disconnection time $T_N$ of the discrete cylinder by simple random walk. In particular our bounds imply that the laws of the variables $N^{2d}/T_N$ are tight, for all $d \ge 2$. This result was previously only known to hold when $d \ge 17$, cf.~\cite{DembSzni08}. Combined with the upper bounds of \cite{Szni08b}, this shows that for all $d \ge 2$, ``$T_N$ lives in scale $N^{2d}$''.
 
 \medskip
 We will now present the objects of study more precisely. For $d \ge 2$ and $N \ge 1$, we consider the discrete cylinder
 \begin{equation}\label{0.1}
 E = \IT \times \IZ, \;\;\mbox{where} \;\; \IT = (\IZ / N \IZ)^d\,.
 \end{equation}

\n
For $x$ in $E$ we denote with $P_x$, resp.~$P$, the canonical law on the space $\cT$ of nearest-neighbor $E$-valued trajectories, of the simple random walk on $E$ starting at $x$, resp.~with the uniform distribution on $\IT \times \{0\}$. We write $X_\point$ for the canonical process and $Y_\point$ and $Z_\point$ for its respective $\IT$ and $\IZ$ components.

\medskip
Another important ingredient are the so-called random interlacements at level $u \ge 0$ introduced in \cite{Szni07a}. They describe the trace on $\IZ^{d+1}$ (where $d + 1$ in the present article plays the role of $d$ in \cite{Szni07a}) left by a cloud of paths constituting a Poisson point process on the space of doubly infinite trajectories on $\IZ^{d+1}$ modulo time-shift, tending to infinity at positive and negative infinite times. We refer to Section 1 for precise definitions. The non-negative parameter $u$ essentially corresponds to a multiplicative factor of the intensity measure of this point process. In a standard fashion one constructs on the same space $(\Omega, \cA, \IP)$, see (\ref{1.14}), (\ref{1.20}), the family $\cI^u, u \ge 0$, of random interlacements at level $u$. They are the traces on $\IZ^{d+1}$ of the trajectories modulo time-shift in the cloud, which have labels at most $u$. The random subsets $\cI^u$ increase with $u$, and for $u > 0$ constitute infinite random connected subsets of $\IZ^{d+1}$, ergodic under space translations, cf.~Theorem 2.1 and Corollary 2.3 of \cite{Szni07a}. The complement $\cV^u$ of $\cI^u$ in $\IZ^{d+1}$ is the so-called vacant set at level $u$.

\medskip
Our main result establishes a stochastic domination control on scales of order $N^{1-\ve}$, $0 < \ve < 1$, of the local picture left by simple random walk on the cylinder $E$ at certain random times, in terms of the corresponding trace of a random interlacement $\cI^v$, at a suitably adjusted level $v$. More precisely, given a height $z \in \IZ$ in the cylinder, we consider the sequence $R^z_k, D^z_k, k \ge 1$, of successive return times of the vertical component of the walk to an interval of length of order $N$ centered at $z$ and departures from a concentric interval of length of order $N(\log N)^2$, cf.~(\ref{1.10}). We show in the main Theorem \ref{theo1.1}, that for $0 < \ve < 1$, $\alpha > 0$, $v > (d+1) \,\alpha$, for large $N$, given any $x = (y,z)$ in $E$, we can construct a probability $Q$ on some auxiliary space coupling the simple random walk on $E$ under $P$, with the random interlacements on $\IZ^{d+1}$ under $\IP$, so that, cf.~(\ref{1.24}),
 \begin{equation}\label{0.2}
Q [(X_{[0,D^z_K]} - x) \cap A \subseteq \cI^v \cap A] \ge 1 - c\,N^{-3d}\,,
 \end{equation}
 
 \n
where $K$ has order $\alpha N^{d-1} (\log N)^{-2}$, $A$ is a box centered at the origin with side-length of order $N^{1-\ve}$, (viewed both as subset of $E$ and $\IZ^{d+1}$), and $c$ a dimension dependent constant.

\medskip
When $z$ has size of order at most $N^d$, the random times $D^z_K$, which appear in (\ref{0.2}) have typical order of magnitude $N^{2d}$, cf.~Proposition \ref{prop7.1} and Remark \ref{rem7.2}. As an application of the main Theorem \ref{theo1.1} we derive a lower bound on the disconnection time $T_N$ of the discrete cylinder by simple random walk, cf.~(\ref{7.1}). Namely we show in Theorem \ref{theo7.3} that
\begin{equation}\label{0.3}
\underset{N}{\underline{\lim}}\; P[T_N > \gamma N^{2d}] \ge W\Big[\zeta \Big(\mbox{\f $\dis\frac{v}{\sqrt{d+1}}$}\Big) > \gamma\Big], \;\mbox{for all $\gamma > 0$}\,,
\end{equation}
where $v$ is a suitably small number, $W$ stands for the Wiener measure, and
\begin{equation}\label{0.4}
\zeta(u) = \inf\big\{t \ge 0; \;\sup\limits_{a \in \IR} \;L(a,t) \ge u\big\}, \;\mbox{for $u \ge 0$}\,,
\end{equation}

\n
with $L(a,t)$ a jointly continuous version of the local time of the canonical Brownian motion. In particular this implies that for $d \ge 2$, 
\begin{equation}\label{0.5}
\mbox{the laws of $N^{2d}/T_N$ under $P$, $N \ge 2$, are tight},
\end{equation}

\n
a property previously only established when $d \ge 17$, cf.~\cite{DembSzni08}.

\medskip
It is an open problem, cf.~Remark 4.7 2) of \cite{Szni08b}, whether in fact
\begin{equation}\label{0.6}
\mbox{$T_N/N^{2d}$ converges in law towards $\zeta \Big(\mbox{\f $\dis\frac{u_*}{\sqrt{d+1}}\Big)$}$},
\end{equation}

\n
where $u_* \in (0,\infty)$ is the critical value for the percolation of $\cV^u$, cf.~\cite{Szni07a}, \cite{SidoSzni09a}, see also in the present work below (\ref{1.22}). A companion upper bound to (\ref{0.3}) already appears in Corollary 4.6 of \cite{Szni08b} and states that
\begin{equation}\label{0.7}
\overline{\lim\limits_N} \;P[T_N \ge \gamma N^{2d}] \le W \Big[ \zeta \Big(\mbox{\f $\dis\frac{u_{**}}{\sqrt{d+1}}$}\Big) \ge \gamma\Big], \;\mbox{for all} \;\gamma > 0,
\end{equation}

\n
where $u_{**} \in [u_*,\infty)$ is another critical value, cf.~(0.6) of \cite{Szni08b}, and Remark 7.5 2) below. The claim (\ref{0.6}) would follow from proving (\ref{0.3}) with $v = u_*$, and showing that $u_* = u_{**}$. In this respect an additional interest of Theorem \ref{theo1.1} stems from the fact that it enables to improve the value $v$ in (\ref{0.3}) once quantitative controls on the percolative properties of the vacant set $\cV^u$, with $u < u_*$, are derived. We refer to Remark 7.5 2) for further discussion of this matter. As a direct consequence of (\ref{0.5}) and of the upper bound (0.7) of \cite{Szni08b}, one thus finds that for all $d \ge 2$,
\begin{equation}\label{0.8}
\begin{array}{l}
\mbox{the laws on $(0,\infty)$ of $T_N/N^{2d}$ under $P$, with $N \ge 2$, are tight,}
\\
\mbox{i.e. ``$T_N$ lives in scale $N^{2d}$''}.
\end{array}
\end{equation}

\medskip\n
We will now give some comments on the proofs of the main results. The derivation of Theorem \ref{theo1.1} involves a sequence of steps which combine some of the techniques which have been developed in \cite{Szni07a}, \cite{Szni09a} and \cite{Szni08b}. A more detailed outline of these steps appears in Section 1 after the statement of Theorem \ref{theo1.1}. For the time being we only discuss the rough strategy of the proof, and for simplicity assume that $x = 0$ in (\ref{0.2}). We also write $R_k,D_k$ in place of $R_k^{z=0}, D_k^{z=0}$, for $k \ge 1$, see above (\ref{0.2}). A key identity proved in Lemma 1.1 of \cite{Szni08b} and recalled in (\ref{1.13}) below, makes it advantageous to replace the true excursions $X_{(R_k + \cdot) \wedge D_k}$, $k \ge 1$, which contain all the information about $X_{[0,D_K]} \cap A$, with an iid collection $X_\point^{\prime k}$, $1 \le k \le K^\prime$, of ``special excursions'', which have same distribution as the walk starting uniformly on the collection of points in $E$ with height equal to $\pm \;N$, stopped when exiting $\wt{B} = \IT \times (-h_N,h_N)$, where $h_N$ is of order $N(\log N)^2$, cf.~(\ref{1.8}), and $K^\prime$ is slightly bigger than $K$, cf.~(\ref{1.26}). Indeed one then uses a Poissonization procedure and only retains excursions entering $A$, from the time they enter $A$ until they exist $\wt{B}$. In this fashion one obtains a Poisson point measure $\mu^\prime$ on the set of paths starting on the ``surface of $A$'', and stopped at the boundary of $\wt{B}$, cf.~(\ref{4.1}). The intensity measure of this Poisson point measure has a structure similar to the intensity measure of the Poisson point process attached to trajectories of a random interlacement entering $A$, cf.~Proposition \ref{prop4.1} and (\ref{1.18}), (\ref{1.19}). This eventually leads to the desired comparison.

\medskip
To replace the true excursions with the ``special excursions'', we proceed as follows. The coupling technique of Proposition \ref{prop2.2}, see also Section 3 of \cite{Szni08b}, takes advantage of the fact that between each departure of $\wt{B}$ and return to the much smaller $B = \IT \times [-N,N] \subseteq \wt{B}$, the $\IT$-component of the walk has sufficient time to homogenize. This enables us to replace the true excursions $X_{(R_k + \cdot) \wedge D_k}$, $1 \le \cdot \le K$, with a collection of excursions $\wt{X}_\point^k$, $1 \le k \le K$, which however are not iid. These excursions are only independent conditionally on the sequences $Z_{R_k},Z_{D_k}, k \ge 1$, with respective laws given by that of a ``special excursion'', (see above), conditioned to start at height $Z_{R_k}$ and exit $\wt{B}$ at height $Z_{D_k}$. One then needs to dominate the ranges of $\wt{X}_\point^k$, $1 \le k \le K$ with the ranges of a collection of iid ``special excursions'' $X_\point^{\prime k}$, $1 \le k \le K^\prime$, (with $K^\prime$ slightly bigger than $K$, as mentioned above). This step is achieved by constructing a suitable coupling in Section 3, and using large deviation estimates under $P$ for the pair empirical distribution $\frac{1}{K} \;\sum_{k \le K} \;\delta_{(Z_{R_k},Z_{D_k})}$, and for a similar object attached to the iid ``special excursions'', with $K^\prime$ in place of $K$. The above pair empirical distribution attached to $Z_{R_k}$, $Z_{D_k}$, $k \ge 1$, under $P$, can be controlled with the pair empirical distribution recording consecutive values of a Markov chain on $\{1, -1\}$, with $N$-dependent transition probability, governing the evolution of ${\rm sign}(Z_{D_k}) = {\rm sign}(Z_{R_{k+1}})$, $P$-a.s.. The crucial domination estimate appears in Proposition \ref{prop3.1}.

\medskip
As already pointed out, once true excursions are replaced with ``special excursions'', one is quickly reduced to the consideration of the trace on $A$ of the paths in the support of a Poisson point measure $\mu^\prime$ with state space the set of excursions starting on the surface of $A$ and stopped at the boundary of $\wt{B}$. However these excursions live on a slice of the cylinder $E$ and not on $\IZ^{d+1}$. To correct this feature and enable a comparison with random interlacements, we employ truncation as well as the ``sprinkling technique'' of \cite{Szni07a}. Namely we only retain the part of the excursions going from their starting point on the surface of $A$ up to their first exit from a box $\wt{C}$ of side-length of order $\frac{N}{2}$ centered at the origin, cf.~(\ref{1.27}). This is the truncation. We also slightly increase the intensity of the Poisson measure. This slight increase of the intensity, the ``sprinkling'', is meant to compensate for the truncation of the original excursions as they exit $\wt{C}$, and ensure that the trace on $A$ of the trajectories in the support of this new Poisson point measure $\mu$ typically dominates the corresponding trace on $A$ of paths in the support of $\mu^\prime$. The key control appears in Proposition \ref{prop5.1}. This result is similar up to some modifications to Theorem 3.1 of \cite{Szni08b}, where truncation and sprinkling is carried out on $\IZ^{d+1}$-valued trajectories instead of $E$-valued trajectories here. The interest of the step we just described is that paths in the support of the Poisson point measure $\mu$ live in $\wt{C} \cup \partial \wt{C}$, which can both be viewed as a subset of $E$ and $\IZ^{d+1}$. The intensity measure of $\mu$, cf.~(\ref{5.4}), can easily be compared to the intensity of the Poisson point measure $\mu_{A,v}$, cf.~(\ref{1.18}), which contains the information of the trace on $A$ left by random interlacements at level $v$. This is the essence of the comparison which appears in Proposition \ref{prop6.1} and leads to the conclusion of the proof of Theorem \ref{theo1.1}.

\medskip
The lower bound on the disconnection time $T_N$, cf.~(\ref{0.3}) or Theorem \ref{theo7.3}, now follows rather straightforwardly. It relies on the one hand on estimates for the random times $D^z_K$ which relate them to the random variable $\zeta$ of (\ref{0.4}), see Proposition \ref{prop7.1} and Remark \ref{rem7.2}, and on the other hand on the fact, see (\ref{7.15}), that
\begin{equation}\label{0.9}
\lim\limits_N \;P[T_N \le \gamma N^{2d} < \inf\limits_{|z| \le N^{2d+1}} D^z_K] = 0\,,
\end{equation}

\n
when the parameter $\alpha$ entering the definition of $K$, cf.~(\ref{1.24}) is chosen small enough. To prove (\ref{0.9}) one uses Theorem \ref{theo1.1} as well as controls from \cite{SidoSzni09a} on the rarity of long planar $*$-paths in $\cI^v$, when $v$ is small, see (\ref{1.23}) below. The point is that the occurrence of the disconnection before time $\gamma N^{2d}$ forces the presence somewhere in the cylinder at height in absolute value at most $N^{2d+1}$, of a long planar $*$-path in $X_{[0,T_N]}$, cf.~Lemma \ref{lem7.4}. Let us mention that being able to prove (\ref{0.9}) for all $\alpha < \frac{u_*}{d+1}$ would yield (\ref{0.3}) with $v = u_*$, and thus bring one closer to a proof of (\ref{0.6}), see also Remark 7.5 2).

\medskip
We will now describe the organization of this article.

\medskip
Section 1 introduces further notation and recalls various useful facts concerning random walks and random interlacements. The main Theorem \ref{theo1.1} is stated and an outline of the main steps of its proof is provided.

\medskip
In Section 2 we construct the excursions $\wt{X}_\point^k$, $k \ge 1$, mentioned in the above discussion. The main result appears in Proposition \ref{prop2.2}.

\medskip
Section 3 shows how one can dominate the ranges $X_{[R_k,D_k]}$, $1 \le k \le K$, in terms of the ranges of an iid collection of ``special excursions'' $X_\point^{\prime k}$, $1 \le k \le K^\prime$, where $K^\prime$ is slightly bigger than $K$. The key control appears in Proposition \ref{prop3.1}. 

\medskip
Section 4 contains a Poissonization step where the Poisson point measure $\mu^\prime$ is introduced.

\medskip
In Section 5 truncation and sprinkling enable to dominate the trace on $A$ of the paths in the support of $\mu^\prime$ in terms of the corresponding trace of the truncated paths in the support of the Poisson point measure $\mu$. The main step is Proposition \ref{prop5.1}. The Proposition \ref{prop5.4} comes as a direct consequence and encapsulates what is needed for the next section. 

\medskip
Section 6 develops the final comparison between random walk on $E$ and random interlacements on $\IZ^{d+1}$, so as to complete the proof of Theorem \ref{theo1.1}.

\medskip
In Section 7 we give an application to the derivation of a lower bound on the disconnection time in Theorem \ref{theo7.3}. Some open problems are mentioned in Remark \ref{rem7.5}.

\medskip
Let us comment on the convention we use for constants. Throughout the text $c$ or $c^\prime$ denote positive constants solely depending on $d$, with values changing from place to place. The numbered constants $c_0,c_1,\dots$ are fixed and refer to the value at their first appearance in the text. Dependence of constants on additional parameters appear in the notation. For instance $c(\alpha)$ stands for a positive constant depending on $d$ and $\alpha$.

Finally some pointers to the literature on random interlacements might be useful to the reader. Random interlacements on $\IZ^{d+1}$ have been introduced in \cite{Szni07a}, where the investigation of the percolative properties of the vacant set was initiated. The uniqueness of the infinite cluster of the vacant set has been shown in \cite{Teix09a}, and the positivity of $u_*$ in full generality in \cite{SidoSzni09a}, (in \cite{Szni07a} this had only been shown when $d  \ge 6$). The stretched exponential decay of the connectivity function for $u > u_{**}$, is proved in \cite{SidoSzni09b}, and quantitative controls on the rarity of large finite clusters in the vacant set, when $d  \ge 4$ and $u$ sufficiently small, are developed in \cite{Teix08c}. Random interlacements on transient weighted graphs are discussed in \cite{Teix08b}. The fact that random interlacements describe the microscopic structure left by random walks on discrete cylinders at times comparable to the square of the number of points of the base is the object of \cite{Szni09a}. Similar results for the random walk on the torus, and generalizations to cylinders with more general bases can respectively be found in \cite{Wind08}, and \cite{Wind09}. Applications of random interlacements to the control of the disconnection time of discrete cylinders are the main theme of \cite{Szni08b}, where an upper bound on the disconnection time is derived, and of the present article, where a lower bound on the disconnection time is obtained.

\section{Some notation and the main result}
\setcounter{equation}{0}

In this section we introduce additional notation and recall some useful results concerning random walks and random interlacements. In particular a key identity from Lemma 1.1 of \cite{Szni08b} for the hitting distribution of the walk on the cylinder lies at the heart of the comparison with random interlacements. We recall it below in (\ref{1.13}). We then state the main Theorem \ref{theo1.1} and outline the key steps of its proof.

\medskip
We write $\IN = \{0,1,2,\dots \}$ for the set of natural numbers. Given a non-negative real number $a$, we write $[a]$ for the integer part of $a$, and for real numbers $b,c$ we write $b \wedge c$ and $b \vee c$ for the respective minimum and maximum of $b$ and $c$. We write $e_i, 1 \le i \le d+1$, for the canonical basis of $\IR^{d+1}$. We let $|\cdot |$ and $|\cdot |_\infty$ respectively stand for the Euclidean and $\ell^\infty$-distances on $\IZ^{d+1}$ or for the corresponding distances induced on $E$. Throughout the article we assume $d \ge 2$. We say that two points on $\IZ^{d+1}$ or $E$ are neighbors, respectively $*$-neighbors, if their $|\cdot |$-distance, respectively $| \cdot |_\infty$-distance equals $1$. By finite path, respectively finite $*$-path, we mean a finite sequence $x_0,x_1,\dots,x_n$ on $\IZ^{d+1}$ or $E$, $n \ge 0$, such that for each $0 \le i < n$, $x_i$ and $x_{i+1}$ are neighbors, respectively $*$-neighbors. Sometimes, when this causes no confusion, we simply write path or $*$-path, in place of finite path or finite $*$-path. We denote the closed $|\cdot|_\infty$-ball and the $|\cdot |_\infty$-sphere with radius $r \ge 0$ and center $x$ in $\IZ^{d+1}$ or $E$ with $B(x,r)$ and $S(x,r)$. For $A,B$ subsets of $\IZ^{d+1}$ or $E$ we write $A + B$ for the set of elements $x + y$ with $x$ in $A$ and $y$ in $B$. We also write $U \subset\subset \IZ^{d+1}$ or $U \subset \subset E$ to indicate that $U$ is a finite subset of $\IZ^{d+1}$ or $E$. Given $U$ subset of $\IZ^{d+1}$ or $E$, we denote with $|U|$ the cardinality of $U$, with $\partial U$ the boundary of $U$ and $\partial_{\rm int} U$ the interior boundary of $U$:
\begin{equation}\label{1.1}
\partial U = \{ x \in U^c; \,\exists x^\prime \in U, |x- x^\prime | = 1\}, \;\;\partial_{\rm int} U = \{x \in U; \,\exists x^\prime \in U^c, \,|x - x^\prime| = 1\}\,.
\end{equation}

\n
We write $\pi_\IT$ and $\pi_\IZ$ for the respective canonical projections from $E = \IT \times \IZ$ onto $\IT$ \linebreak and $\IZ$.

\medskip
We let $\cT$ stand for the set of nearest neighbor $E$-valued trajectories with time indexed by $\IN$, see below (\ref{0.1}). When $F$ is a subset of $E$, or of $\IZ^{d+1}$, we denote with $\cT_F$ the countable set of nearest neighbor $(F \cup \partial F)$-valued trajectories which remain constant after a finite time. The canonical shift on $\cT$ is denoted with $(\theta_n)_{n \ge 0}$ and the canonical filtration with $(\cF_n)_{n \ge 0}$. Further notation concerning the canonical process on $\cT$ appears below (\ref{0.1}). Given a subset $U$ of $E$ we denote with $H_U$, $\wt{H}_U$ and $T_U$, the respective entrance time of $U$, hitting time of $U$, and exit time from $U$:
\begin{equation}\label{1.2}
\begin{split}
H_U & = \inf\{n \ge 0; X_n \in U\}, \;\wt{H}_U = \inf\{n \ge 1, X_n \in U\}\,,
\\[1ex]
T_U & = \inf\{n \ge 0; \,X_n \notin U\}\,.
\end{split}
\end{equation}

\n
In the case of a singleton $U = \{x\}$, we simply write $H_x$ or $\wt{H}_x$.

\medskip
We denote with $P_x^{\IZ^{d+1}}$ the canonical law of simple random walk on $\IZ^{d+1}$ starting at $x$ and with $E_x^{\IZ^{d+1}}$ the corresponding expectation. We otherwise keep the same notation as for the walk on $E$ concerning the canonical process, the canonical shift and natural objects such as in (\ref{1.2}). Given $K \subset\subset \IZ^{d+1}$ and $U \supseteq K$, a subset of $\IZ^{d+1}$, the equilibrium measure and the capacity of $K$ relative to $U$ are defined by:
\begin{align}
e_{K,U}(x) & =P_x^{\IZ^{d+1}} \,[\wt{H}_K > T_U], \;\;\mbox{for $x \in K$}\,, \label{1.3}
\\[1ex]
& = 0, \;\;\mbox{for $x \notin K$, and} \nonumber
\\[1ex]
{\rm cap}_U(K) & = \dsl_{x  \in K} \,e_{K,U}(x) (\le |K|)\,. \label{1.4}
\end{align}

\n
The Green function of the walk killed outside $U$ is defined as
\begin{equation}\label{1.5}  
g_U(x,x^\prime) = E_x^{\IZ^{d+1}} \Big[\dsl_{n \ge 0} 1\{X_n = x^\prime, n < T_U\}\Big], \;\mbox{for $x,x^\prime$ in $\IZ^{d+1}$}\,.
\end{equation}

\medskip\n
When $U = \IZ^{d+1}$, we drop $U$ from the notation in (\ref{1.3}) - (\ref{1.5}). The Green function is symmetric in its two variables and the probability to enter $K$ before exiting $U$ can be expressed as:
\begin{equation}\label{1.6}
P_x^{\IZ^{d+1}} [H_K < T_U] = \dsl_{x^\prime \in \IZ^{d+1}} g_U (x,x^\prime) \,e_{K,U}(x^\prime), \;\mbox{for $x \in \IZ^{d+1}$}\,.
\end{equation}

\n
One also has the bounds, (see for instance (\ref{1.7}) of \cite{Szni08b}):
\begin{equation}\label{1.7}
\begin{array}{l}
\dsl_{x^\prime \in K} \,g_U (x,x^\prime) / \sup\limits_{y \in K } \;\dsl_{x^\prime \in K} \;g_U (y,x^\prime)   \le 
 P_x^{\IZ^{d+1}} [H_K < T_U] \le 
 \\[4ex]
 \dsl_{x^\prime \in K} \;g_U (x,x^\prime) / \inf\limits_{y \in K} \;\dsl_{x^\prime \in K} g_U(y,x)\,.
\end{array}
\end{equation}

\medskip\n
In the case of the discrete cylinder $E$, when $U \subsetneq E$ is a strict subset of $E$, we define the corresponding objects just as in (\ref{1.3}) - (\ref{1.5}), with $P_x$ and $E_x$ in place of $P_x^{\IZ^{d+1}}$ and $E_x^{\IZ^{d+1}}$. We then have similar identities and bounds as in (\ref{1.6}), (\ref{1.7}). When $\rho$ is a measure on $E$ or $\IZ^{d+1}$, we write $P_\rho$ or $P_\rho^{\IZ^{d+1}}$ in place of $\sum_{x \in E} \rho(x) \,P_x$ or $\sum_{x \in \IZ^{d+1}} \,\rho(x) \,P_x^{\IZ^{d+1}}$.

\medskip
As mentioned above (\ref{0.2}) the main Theorem \ref{theo1.1} involves measuring time in terms of excursions of the random walk in and out of certain concentric boxes in the cylinder $E$. More specifically we introduce the vertical scales
\begin{equation}\label{1.8}
r_N = N < h_N = [N(2 + (\log N)^2)]\,,
\end{equation}

\n
and the boxes in $E$ centered at level $z \in \IZ$:
\begin{equation}\label{1.9}
\begin{array}{l}
B(z) = \IT \times (z + I) \subseteq \wt{B}(z) = \IT \times (z + \wt{I}), \;\;\mbox{where}
\\
I = [-r_N,r_N] \;\;\mbox{and} \;\; \wt{I} = (-h_N,h_N)\,.
\end{array}
\end{equation}

\n
When $z = 0$, we simply write $B$ and $\wt{B}$. The sequence of successive returns of $X_\point$ to $B(z)$ and departure from $\wt{B}(z)$, $R^z_k$, $D^z_k$, $k \ge 1$, is then defined via:
\begin{equation}\label{1.10}
\begin{array}{l}
R^z_1 = H_{B(z)}, \;D^z_1 = T_{\wt{B}(z)} \circ \theta_{R^z_1} + R^z_1, \;\mbox{and for $k \ge 1$},
\\
R^z_{k+1} = R^z_1 \circ \theta_{D^z_k} + D^z_k, \;\mbox{and} \;D^z_{k+1} = D_1^z \circ \theta_{D^z_k} + D^z_k\,,
\end{array}
\end{equation}

\n
so that $0 \le R^z_1 \le D_1^z \le \dots \le R^z_k \le D^z_k \le \dots \le \infty$, and these inequalities except maybe for the first one are $P$-a.s. strict. When $z = 0$, we simply write $R_k,D_k$ in place of $R^0_k$, $D^0_k$, for $k \ge 1$.

\medskip
Certain initial distributions of the walk on $E$ will be useful in what follows. Namely, we will consider for $z \in \IZ$:
\begin{equation}\label{1.11}
q_z = \mbox{\f $\dis\frac{1}{N^d}$}  \dsl_{x \in \IT \times \{z\}} \delta_x, \;\;\mbox{as well as} \;\;q = \fr \;(q_{r_N} + q_{-r_N})\,.
\end{equation}

\n
As a result of Lemma 1.1 of \cite{Szni08b}, the initial distribution $q$ plays a central role in linking random walk on $E$ and random interlacements, see also Remark 1.2 of \cite{Szni08b}. Indeed for $K \subseteq \IT \times (-r_N,r_N)$, one has:
\begin{align}
&P_q[H_K < T_{\wt{B}}, \,X_{H_K} = x] = (d+1) \;\dis\frac{(h_N - r_N)}{N^d} \;e_{K,\wt{B}}(x), \;\mbox{for $x \in K$}, \label{1.12}
\intertext{\mbox{and with the application of the strong Markov property,}}
&P_q[H_K < T_{\wt{B}}, (X_{H_K + \,\cdot}) \in dw] = (d+1) \;\dis\frac{(h_N - r_N)}{N^d} \,P_{e_{K,\wt{B}}} (dw) \,.\label{1.13}
\end{align}

\n
We will now recall some notation and results from \cite{Szni07a} concerning random interlacements. We denote with $W$ the space of doubly infinite nearest neighbor $\IZ^{d+1}$-valued trajectories which tend to infinity at positive and negative infinite times, and with $W^*$ the space of equivalence classes of trajectories in $W$ modulo time-shift. The canonical projection from $W$ onto $W^*$ is denoted by $\pi^*$. We endow $W$ with its canonical $\sigma$-algebra $\cW$, and denote by $X_n$, $n \in \IZ$, the canonical coordinates. 

\medskip
We endow $W^*$ with $\cW^* = \{A \subseteq W^*; (\pi^*)^{-1} (A) \in \cW\}$, the largest $\sigma$-algebra on $W^*$ for which $\pi^*: (W, \cW) \rightarrow (W^*, \cW^*)$ is measurable. We also consider $W_+$ the space of nearest neighbor $\IZ^{d+1}$-valued trajectories defined for non-negative times and tending to infinity. We write $\cW_+$ and $X_n$, $n \ge 0$, for the canonical $\sigma$-algebra and the canonical process on $W_+$. Since $d \ge 2$, the simple random walk on $\IZ^{d+1}$ is transient and $W_+$ has full measure for any $P_x^{\IZ^{d+1}}$, $x \in \IZ^{d+1}$, see above (\ref{1.3}), and we view whenever convenient the law of simple random walk on $\IZ^{d+1}$ starting from $x$, as a probability on $(W_+, \cW_+)$. We consider the space of point measures on $W^* \times \IR_+$:
\begin{equation}\label{1.14}
\Omega =  \left\{ \begin{array}{l} 
\omega = \dsl_{i \ge 0} \, \delta_{(w^*_i,u_i)}, \; \mbox{with $(w_i^*,u_i) \in W^* \times \IR_+, i \ge 0$, and}  
\\[0.5ex]
\mbox{$\o (W^*_K \times [0,u] ) < \infty$, for any $K \subset \subset \IZ^{d+1}, u \ge 0$}, 
\end{array}\right\}
\end{equation}

\n
where for $K \subset \subset \IZ^{d+1}$, $W^*_K \subseteq W^*$ is the subset of trajectories modulo time-shift, which enter $K$:
\begin{equation}\label{1.15}
\mbox{$W^*_K = \pi^*(W_K)$ and $W_K = \{w \in W$; for some $n \in \IZ$, $X_n(\omega) \in K\}$}.
\end{equation}

\n
We endow $\Omega$ with the $\sigma$-algebra $\cA$ generated by the evaluation maps $\o \r \o(D)$, where $D$ runs over the product $\sigma$-algebra $\cW^* \times \cB(\IR_+)$. We denote with $\IP$ the probability on $(\Omega, \cA)$ under which $\o$ becomes a Poisson point measure on $W^* \times \IR_+$ with intensity $\nu(dw^*)du$, giving finite mass to the sets $W^*_K \times [0,u]$, for $K \subset \subset \IZ^{d+1}$, $u \ge 0$. Here $\nu$ stands for the unique $\sigma$-finite measure on $(W^*,\cW^*)$ such that for every $K \subset \subset \IZ^{d+1}$, cf.~Theorem 1.1 of \cite{Szni07a}:
\begin{equation}\label{1.16}
1_{W^*_K} \,\nu = \pi^* \circ Q_K\,,
\end{equation}

\medskip\n
with $Q_K$ the finite measure on $W^0_K$, the subset of $W_K$ of trajectories which enter $K$ for the first time at time $0$, such that for $A,B$ in $\cW_+$, $x \in \IZ^{d+1}$:
\begin{equation}\label{1.17}
Q_K [(X_{-n})_{n \ge 0} \in A, \;X_0 = x, \;(X_n)_{n \ge 0} \in B] = P_x^{\IZ^{d+1}} [A \,|\,\wt{H}_K = \infty] \,e_K(x) \,P_x^{\IZ^{d+1}} [B]\,,
\end{equation}

\n
where $e_K$, cf.~(\ref{1.3}) and below (\ref{1.5}), stands for the equilibrium measure of $K$, and is concentrated on the points of $\partial_{\rm int} K$ for which $P_x [\wt{H}_K = \infty] > 0$.

\medskip
Given $K \subset \subset \IZ^{d+1}$, $u \ge 0$, one further defines on $(\Omega, \cA)$ the random point process with state space the set of finite point measures on $(W_+, \cW_+)$:
\begin{equation}\label{1.18}
\mu_{K,u}(\o) = \dsl_{i \ge 0} \,\delta_{(w_i^*)^{K,+}} \,1\{w^*_i \in W^*_K, u_i \le u\}, \;\mbox{for} \;\omega = \dsl_{i \ge 0} \,\delta_{(w_i^*,u_i)}\,,
\end{equation}

\n
where $(w^*)^{K,+}$ stands for the trajectory in $W_+$ which follows step by step $w^* \in W_K^*$ from the first time it enters $K$. One then has, cf.~Proposition 1.3 of \cite{Szni07a}, for $K \subset \subset \IZ^{d+1}$, $u \ge 0$:
\begin{equation}\label{1.19}
\begin{array}{l}
\mbox{$\mu_{K,u}$ is a Poisson point process on $(W_+, \cW_+)$ with intensity measure $u \,P_{e_K}^{\IZ^{d+1}}$}, 
\\
\mbox{where we used the notation introduced below (\ref{1.7})}\,.
\end{array}
\end{equation}

\medskip\n
Given $\o \in \Omega$, the interlacement at level $u \ge 0$, is the subset of $\IZ^{d+1}$:
\begin{equation}\label{1.20}
\begin{split}
\cI^u(\o) & = \bigcup\limits_{u_i \le u} \,{\rm range} (w_i^*), \;\;\mbox{if} \;\;\o = \dsl_{i \ge 0} \,\delta_{(w_i^*, u_i)}
\\[1ex]
= & \bigcup\limits_{K \subset \subset \IZ^{d+1}} \;\; \bigcup\limits_{w \in {\rm Supp} \,\mu_{K,u}(\o)} w(\IN)\,,
\end{split}
\end{equation}

\n
where for $w^* \in W^*$, range $(w^*) = w(\IZ)$ for any $w \in W$ with $\pi^*(w) = w^*$. The vacant set at level $u$ is then defined as:
\begin{equation}\label{1.21}
\cV^u(\omega) = \IZ^{d+1} \backslash \cI^u(\omega), \;\mbox{for} \;\omega \in \Omega, u \ge 0\,.
\end{equation}
One has the identity
\begin{equation}\label{1.22}
\IP[\cV^u \supseteq K] = {\rm exp}\{ - u \;{\rm cap}(K)\}, \;\mbox{for all $K \subset \subset \IZ^{d+1}$}\,,
\end{equation}

\medskip\n
and this property leads to a characterization of the law $Q_u$ on $\{0,1\}^{\IZ^{d+1}}$ of the random subset $\cV^u$, cf.~Remark 2.2 2) of \cite{Szni07a}.

\medskip
As a result of Theorem 3.5 of \cite{Szni07a} and 3.4 of \cite{SidoSzni09a}, it follows that there exists a non-degenerate critical value $u_* \in (0,\infty)$, such that for $u > u_*$, $\IP$-a.s., $\cV^u$ has only finite connected components, whereas for $u < u_*$, $\IP$-a.s., $\cV^u$ has an infinite connected component. It is also known, cf.~\cite{Teix09a}, that for each $u \ge 0$, there is $\IP$-a.s. at most one infinite connected component in $\cV^u$. The existence or absence of such a component when $u = u_*$ is presently an open problem. In Section 7, when applying Theorem 1.1 to the study of disconnection time we will also need the following estimate, cf.~(3.28) of \cite{SidoSzni09a}:
\begin{equation}\label{1.23}
\begin{array}{l}
\mbox{for any $\rho > 0$, there exists $u(\rho) > 0$ such that for $u \le u(\rho)$} 
\\
\mbox{$\lim\limits_{L \r \infty} \; L^\rho \,\IP[$ there is a $*$-path from $0$ to $S(0,L)$ in $\cI^u \cap (\IZ e_1 + \IZ e_{d+1})] = 0$}\,,
\end{array}
\end{equation}

\n
where we use the notation from the beginning of this section, and any $e_i, e_j, i \not= j$, could of course replace $e_1$ and $e_{d+1}$ in (\ref{1.23}).

\medskip
We can now state the main result of this article. It deals with the trace left in a neighborhood of size $N^{1-\varepsilon}$ of some point $x$ of the cylinder by the random walk at time $D^z_K$, where $z = \pi_\IZ(x)$ and $K$ has order $N^d/h_N$, cf.~(\ref{1.8}). Theorem \ref{theo1.1} shows that with high probability this trace is dominated by the corresponding trace of a random interlacement at a suitably adjusted level. When $|z|$ remains of order at most $N^d$, $D^z_K$ typically corresponds to time scales of order $N^{2d}$, cf.~Remark 7.2.

\begin{theorem}\label{theo1.1} $(d \ge 2$, $\alpha > 0$, $v > (d+1) \alpha$, $0 < \ve < 1)$

\medskip
For $N \ge c(\alpha, v, \ve)$ and $x = (y,z) \in E$ one can construct a coupling $Q$ on some auxiliary space of the simple random walk $X_\point$ on $E$ under $P$ and of the Poisson point measure $\o$ under $\IP$ so that
\begin{equation}\label{1.24}
Q[(X_{[0,D^z_K]} - x) \cap A \subseteq \cI^v(\o) \cap A] \ge 1 - cN^{-3d},
\end{equation}

\n
where $K = [\alpha N^d/h_N]$ and $A = B(0,N^{1-\ve})$ is viewed both as a subset of $E$ and $\IZ^{d+1}$.
\end{theorem}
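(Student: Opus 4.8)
\medskip\noindent
\textit{Strategy and reduction.} By the $\IT$-translation invariance of $P$ we may assume the $\IT$-component of $x$ vanishes, and, since $X_\point$ cannot meet $x+A$ before its first return $R^z_1$ to $B(z)\supseteq x+A$, the set $(X_{[0,D^z_K]}-x)\cap A$ is carried by the excursions $X_{(R^z_k+\point)\wedge D^z_k}$, $1\le k\le K$; after translating by $x$ we may as well take $z=0$ and write $R_k,D_k$ for $R^0_k,D^0_k$. The plan is to pass, through a chain of stochastic-domination couplings each failing with probability $O(N^{-3d})$, from these true excursions to an i.i.d.\ family of ``special excursions'', then to the paths of a Poisson point measure which, after a truncation/sprinkling adjustment, lives in a box embeddable in $\IZ^{d+1}$ and whose intensity is dominated, as a measure, by that of the point measure $\mu_{A,v}$ of (\ref{1.18}) recording the trace of $\cI^v$ on $A$. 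The organising input is the identity (\ref{1.13}): conditionally on its endpoints, an excursion across $\wt{B}$ issued from $q$ is driven by the equilibrium measure $e_{\point,\wt{B}}$, which is precisely the shape of the interlacement intensity restricted to a set.

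\medskip\noindent
\textit{From true to i.i.d.\ excursions (Sections 2--3).} Between each departure $D_k$ from $\wt{B}$ and the next return $R_{k+1}$ to the much smaller $B=\IT\times[-N,N]$ the $\IT$-component has time $\gtrsim h_N^2\gg N^2$ to homogenise, so the entrance law into $B$ differs from $q_{\pm r_N}$ by negligible total variation; this yields the coupling of Proposition \ref{prop2.2}, replacing the true excursions by excursions $\wt{X}^k_\point$, $1\le k\le K$, conditionally independent given $(Z_{R_k},Z_{D_k})_{k\ge 1}$, each $\wt{X}^k_\point$ distributed as a special excursion --- a walk with initial law $q=\tfrac12\,(q_{r_N}+q_{-r_N})$ run until it exits $\wt{B}$ --- conditioned on starting height $Z_{R_k}$ and exit height $Z_{D_k}$. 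Next one dominates $\bigcup_{k\le K}\mathrm{range}(\wt{X}^k_\point)$ by $\bigcup_{k\le K'}\mathrm{range}(X^{\prime k}_\point)$ for an i.i.d.\ family of special excursions with $K'$ slightly above $K$ (Proposition \ref{prop3.1}): the sign process $\sigma_k=\mathrm{sign}(Z_{D_k})=\mathrm{sign}(Z_{R_{k+1}})$ is, $P$-a.s., a two-state Markov chain with $N$-dependent transition probabilities $\tfrac12\pm O((\log N)^{-2})$ (gambler's ruin across the scales $r_N$ and $h_N$), the conditional laws of $(Z_{R_k},Z_{D_k})$ given the signs coincide with those of the special excursions, and a large-deviation estimate for the pair empirical measure $\tfrac1K\sum_{k\le K}\delta_{(\sigma_k,\sigma_{k+1})}$, together with its i.i.d.\ analogue on $K'$ samples, drives the coupling; since $K\sim\alpha N^{d-1}(\log N)^{-2}$, the relevant deviation probabilities are of order $\exp(-cN^{d-1}(\log N)^{-2})\ll N^{-3d}$.

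\medskip\noindent
\textit{Poissonisation, truncation/sprinkling, interlacement comparison (Sections 4--6).} Turning $K'$ into an independent Poisson count of slightly larger mean and keeping, from each special excursion that meets $A$, only the portion from its first entrance into $A$ until it exits $\wt{B}$, one obtains a Poisson point measure $\mu^\prime$ on paths issued on $\partial A$ and stopped on $\partial\wt{B}$, with intensity $a_N\,P_{e_{A,\wt{B}}}[\,\point\,]$ where $a_N=(d+1)(h_N-r_N)N^{-d}K'\to(d+1)\alpha$ --- the same form as (\ref{1.18}), (\ref{1.19}) but living on the slice $\wt{B}$ of $E$ rather than on $\IZ^{d+1}$ (Proposition \ref{prop4.1}). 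To repair this, truncate each path of $\mu^\prime$ at its first exit from the box $\wt{C}$ of side-length of order $N/2$ about the origin, whose closure embeds in both $E$ and $\IZ^{d+1}$, and compensate the range lost to re-entries into $A$ after exiting $\wt{C}$ by ``sprinkling'', i.e.\ slightly inflating the intensity, so that the trace on $A$ of the resulting Poisson measure $\mu$ dominates that of $\mu^\prime$ with probability $\ge 1-cN^{-3d}$ (Propositions \ref{prop5.1}, \ref{prop5.4}); here the slack $v>(d+1)\alpha$ is spent, the finitely many $(1+o(1))$ factors accumulated in Sections 2--5 multiplying so as to keep the effective level below $v$ for $N\ge c(\alpha,v,\ve)$. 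Finally, the scales of $A$ and $\wt{B}$ being well separated one has $e_{A,\wt{B}}\to e_A$, so the intensity of $\mu$, viewed via the embedding $\wt{C}\cup\partial\wt{C}\subset\IZ^{d+1}$, is dominated by $v\,P^{\IZ^{d+1}}_{e_A}$, the intensity of $\mu_{A,v}$; a concluding coupling then yields $\bigcup_{w\in\mathrm{Supp}\,\mu}w(\IN)\cap A\subseteq\cI^v(\o)\cap A$ (Proposition \ref{prop6.1}), and composing all the intervening couplings produces the $Q$ of (\ref{1.24}).

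\medskip\noindent
\textit{Main obstacle.} I expect the crux to be the step of Section 3: one must prove a large-deviation estimate for the height pair-empirical measure that is at once quantitatively strong enough to beat $N^{-3d}$ uniformly and structurally robust enough to be converted into a genuine coupling dominating the \emph{ranges} of the dependent excursions $\wt{X}^k_\point$ by ranges of the i.i.d.\ $X^{\prime k}_\point$, not merely their marginal laws. The near-balanced, $N$-dependent sign chain, with bias only of order $(\log N)^{-2}$, makes the relevant rate functions degenerate and the accounting delicate. The truncation/sprinkling of Section 5 is a close second, chiefly for the care needed to verify that the sprinkled intensity overcomes the truncation loss while remaining within the budget fixed by $v>(d+1)\alpha$.
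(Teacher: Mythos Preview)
Your proposal is correct and follows essentially the same route as the paper's proof: the reduction to $x=0$ with an initial law $q_{z_0}$ (Proposition \ref{prop2.1}), the homogenisation coupling to conditionally independent excursions (Proposition \ref{prop2.2}), the domination by i.i.d.\ special excursions via large deviations for the sign chain (Proposition \ref{prop3.1}), the Poissonisation (Proposition \ref{prop4.1}), the truncation--sprinkling step (Propositions \ref{prop5.1}, \ref{prop5.4}), and the final equilibrium-measure comparison (Proposition \ref{prop6.1}) are all as you describe. Your identification of Section 3 as the crux is apt; one small correction to your worry there is that the rate function does not in fact degenerate---the paper handles the $N$-dependence by comparing $H_{2,N}$ with the non-degenerate limit $H_{2,\infty}$ (the symmetric chain on $\{1,-1\}$), the difference being $O(r_N/h_N)=O((\log N)^{-2})$, so the required uniformity follows from sub-additivity plus this perturbation bound.
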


The proof of Theorem \ref{theo1.1} involves several steps, which we now outline.

\begin{itemize}
\item[a)] This first step reduces the proof to the case where $x = 0$ and the initial distribution of the walk is $q_{z_0}$, cf.~(\ref{1.11}), with $z_0 \in I$, cf.~(\ref{1.9}).

\item[b)] This step constructs a coupling $Q_1$ of $X_\point$ under $P_{q_{z_0}}$ with a sequence $\wt{X}_\point^k$, $k \ge 1$, of $E$-valued processes, which are conditionally independent given $Z_{R_k}$, $Z_{D_k}$, $k \ge 1$, cf.~below (\ref{1.10}), with respective laws which coincide with that of $X_{\cdot \wedge T_{\wt{B}}}$ under $P_{Z_{R_k}, Z_{D_k}}$, where we use the notation:
\end{itemize}

\vspace{-5ex}
\begin{equation}\label{1.25}
P_{z_1,z_2} = P_{q_{z_1}} [ \cdot \,|\,Z_{T_{\wt{B}}} = z_2], \;\;\mbox{for $z_1 \in I, z_2 \in \partial \wt{I}$} \,,
\end{equation}

\begin{itemize}
\item[~~~]
and are such that, cf.~Proposition \ref{prop2.2},
\begin{equation*}
Q_1[\wt{X}_\point^k \not= X_{(R_k + \cdot) \wedge D_k} ] \le c\,N^{-4d}\,.
\end{equation*}

\item[c)] This steps constructs a coupling $Q_2$ of the above processes with a sequence of iid $E$-valued processes $X_\point^{\prime k}$, $k \ge 1$, with same law as $X_{\cdot \wedge T_{\wt{B}}}$ under $P_q$, cf.~(\ref{1.11}), in such a fashion that, cf.~Proposition \ref{prop3.1}, 
\begin{equation*}
Q_2\Big[\bigcup_{1 \le k \le K} X_{[R_k,D_k]} \subseteq \bigcup_{1 \le k \le K^\prime} \;{\rm range} \;X_\point^{\prime k}\Big] \ge 1 - c\,N^{-3d}\,,
\end{equation*}
where
\end{itemize}

\vspace{-2ex}
\begin{equation}\label{1.26}
K^\prime = \Big[\Big(1 + \mbox{\f $\dis\frac{2}{5}$}\;\delta\Big) \,\alpha \,N^d/h_N\Big] \;\; \mbox{and} \;\;1 + \delta = \Big(\mbox{\f $\dis\frac{v}{(d+1) \alpha}$} \Big) \wedge 2\,.
\end{equation}

\medskip
\begin{itemize}
\item[d)] This is a Poissonization step taking advantage of the special property of the distribution $q$, cf.~(\ref{1.12}), (\ref{1.13}). With $Q_3$ one couples the above processes with an independent Poisson variable $J^\prime$ of intensity $(1 + \frac{3}{5}\,\delta) \,\alpha \frac{N^d}{h_N}$, and defines the Poisson point measure on $\cT_{\wt{B}}$, cf.~below (\ref{1.1}),
\begin{equation*}
\mu^\prime = \dsl_{1 \le k \le J^\prime} \;\delta_{X_\point^{\prime k}} \,1 \{{\rm range} \;X_\point^{\prime k} \cap A \not= \emptyset\}\,,
\end{equation*}

\medskip\n
with intensity measure $(1 + \frac{3}{5}\,\delta) (d + 1) \,\alpha (1 - \frac{r_N}{h_N}) \,P_{e_{A, \wt{B}}} [X_{\cdot \wedge T_{\wt{B}}}  \in dw]$, as well as the random subset of $A$
\begin{equation*}
\cI^\prime = \bigcup\limits_{w \in {\rm Supp}\,\mu^\prime} \; {\rm range} \;w \cap A\,,
\end{equation*}

\medskip\n
where ${\rm Supp}\,\mu^\prime$ denotes the support (in $\cT_{\wt{B}})$ of the point measure $\mu^\prime$, so that, cf.~Proposition \ref{prop4.1}:
\begin{equation*}
Q_3 [X_{[0,D_K]} \cap A \subseteq \cI^\prime] \ge 1 - cN^{-3d}\,.
\end{equation*}

\item[e)] In this step one constructs using truncation and sprinkling a coupling $Q_4$ of $X_\point$ and $\cI^\prime$ under $Q_3$ with a Poisson point measure $\mu$ on $\cT_{\wt{C}}$, with intensity measure $(1 + \frac{4}{5}\,\delta) (d + 1) \,\alpha (1 -\frac{r_N}{h_N}) \,P_{e_{A,\wt{B}}} [X_{\cdot \wedge T_{\wt{C}}} \in dw]$, where
\end{itemize}

\vspace{-3ex}
\begin{equation}\label{1.27}
\wt{C} = B\Big(0, \mbox{\f $\dis\frac{N}{4}$}\Big)\,.
\end{equation}

\begin{itemize}
\item[~~~] Defining the random subset of $\wt{C} \cup \partial \wt{C}$
\begin{equation*}
\cI = \bigcup\limits_{w \in {\rm Supp} \,\mu} \;{\rm range} \,w\,,
\end{equation*}

\medskip\n
this coupling is such that, cf.~(\ref{5.44}) in the proof of Proposition \ref{prop5.4},
\begin{equation*}
Q_4 [\cI^\prime \subseteq \cI \cap A] \ge 1 - c\, N^{-3d}\,.
\end{equation*}

\item[f)] In this last step one constructs a coupling $Q^\prime$ of $X_\point$, $\cI^\prime, \cI$ under $Q_4$ with $\o$ under $\IP$ so that cf.~(\ref{6.5}),
\begin{equation*}
Q^\prime [X_{[0,D_K]} \cap A \subseteq \cI^v(\o) \cap A] \ge 1 - c N^{-3d}\,,
\end{equation*}

\n
and this enables to complete the proof of Theorem \ref{theo1.1}.
\end{itemize}

\begin{remark}\label{rem1.2} \rm
As it will be clear from the proof of Theorem \ref{theo1.1}, the exponent $-3d$ in the right-hand side of (\ref{1.24}) can be replaced by an arbitrary negative exponent by simply adjusting constants in Theorem \ref{theo1.1}. The specific choice of the exponent in (\ref{1.24}) will be sufficient for the application to the lower bound on the disconnection time we give in Section 7. \hfill $\square$
\end{remark}

\section{Reduction to the case $\pmb{x = 0}$ and a first coupling}
\setcounter{equation}{0}

This section takes care of steps a) an b)) in the above outline following the statement of Theorem \ref{theo1.1}. We first show in Proposition \ref{prop2.1} that it suffices to prove Theorem \ref{theo1.1} when $x = 0$ in (\ref{1.24}), and the initial distribution of the walk is $q_{z_0}$, with $z_0$ an arbitrary point on $I$, see (\ref{1.11}) and (\ref{1.9}). This is step a). Then we turn to step b) and construct, very much in the spirit of Proposition 3.3 of \cite{Szni09a}, a coupling of $X_\point$ with a sequence of $E$-valued processes $\wt{X}_\point^k$, $k \ge 1$, which conditionally on $Z_{R_k}, Z_{D_k}, k \ge 1$, are independent and respectively distributed as $P_{Z_{R_k}, Z_{D_k}}$, cf.~(\ref{1.25}), in such a fashion that each $\wt{X}^k_\point$ is close to $X_{(R_{k^+} \cdot) \wedge D_k}$. This construction is carried out in Proposition \ref{prop2.2}. It uses the fact that $h_N$ in (\ref{1.8}) is sufficiently large to provide ample time to the $\IT$-component of the walk to ``homogenize'' before reaching $B$, when the starting point of the walk lies outside $\wt{B}$.

\medskip
We keep the notation of Theorem \ref{theo1.1} and begin with the reduction to the case $x = 0$.

\begin{proposition}\label{prop2.1}
If for $N \ge c_0 (\ve, \alpha, v)$ and any $z_0 \in I$ one can construct a coupling $Q^\prime$ of $X_\point$ under $P_{q_{z_0}}$ with $\o$ under $\IP$ so that
\begin{equation}\label{2.1}
Q^\prime [X_{[0,D_K]} \cap A \subseteq \cI^v(\o)] \ge 1 - c N^{-3d}\,,
\end{equation}
then Theorem {\rm \ref{theo1.1}} follows.
\end{proposition}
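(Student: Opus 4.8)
The plan is to reduce the general statement of Theorem~\ref{theo1.1}, involving an arbitrary point $x = (y,z) \in E$ and the return/departure times $R^z_k, D^z_k$, to the special situation $x = 0$, $z = 0$, with the walk started from the deterministic-height distribution $q_{z_0}$ for an arbitrary $z_0 \in I$. Two features of the problem make this reduction essentially a matter of translation invariance together with the strong Markov property. First, the cylinder $E = \IT \times \IZ$ has a transitive group of automorphisms: horizontal shifts of $\IT$ and vertical shifts of $\IZ$ both preserve the law of the simple random walk and carry the boxes $B(z), \wt{B}(z)$ of (\ref{1.9}) to $B(z'), \wt{B}(z')$ accordingly. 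Second, the set $A = B(0, N^{1-\ve})$ is centered at the origin, and the event in (\ref{1.24}) involves $X_{[0,D^z_K]}$ only through its intersection with the translate $x + A$; similarly $\cI^v(\o)$ is translation invariant in law on $\IZ^{d+1}$, so shifting $A$ inside $\IZ^{d+1}$ costs nothing.

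First I would apply the vertical shift by $-z$ and the horizontal shift by $-y$. Under the shift the law $P$ (started uniformly on $\IT \times \{0\}$) becomes the law of the walk started uniformly on $\IT \times \{-z\}$, but this does not matter: the event in (\ref{1.24}) only concerns the walk from the first return time $R^z_1 = H_{B(z)}$ onward, since before $R^z_1$ the trajectory has not yet entered $B(z)$ and a fortiori — because $x + A \subseteq B(z)$ once $N$ is large, as $N^{1-\ve} < r_N = N$ — has empty intersection with $x + A$. Hence by the strong Markov property at $R^z_1$, and using that the walk at time $R^z_1$ lies on $B(z)$, it suffices to control the walk started from an arbitrary point of $B(z)$, i.e.\ after the shift, from an arbitrary point $x_0 \in B$ with $B = \IT \times I$. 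One further reduces from an arbitrary starting point $x_0 = (y_0, z_0) \in B$ to the averaged distribution $q_{z_0}$ of (\ref{1.11}): by horizontal translation invariance the law of $(X_{\cdot} - (y_0,0))$ under $P_{x_0}$ equals that of $X_\cdot$ under $P_{(0,z_0)}$, and then, since $A$ is horizontally centered and $\IT$ acts transitively with the uniform measure $q_{z_0}$ being the $\IT$-average of the point masses, the worst case over $y_0$ is dominated by — in fact equals, after averaging — the statement under $P_{q_{z_0}}$; more precisely one notes that $q_{z_0} = \frac{1}{N^d}\sum_{y_0} \delta_{(y_0,z_0)}$ and the bound (\ref{2.1}) holding uniformly in $z_0$ would, upon shifting back, give (\ref{1.24}) for each fixed starting point after taking the appropriate horizontal translate of $A$.

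Concretely, given the hypothesized couplings $Q'$ for each $z_0 \in I$, I would build the coupling $Q$ of Theorem~\ref{theo1.1} as follows: run the walk under $P_x$ (or under $P$, it is irrelevant) until time $R^z_1$; record the height $Z_{R^z_1} = z + z_0$ with $z_0 \in I$ and the horizontal position; then splice in, via the strong Markov property, the coupling $Q'$ associated to that value of $z_0$, after undoing the horizontal shift and translating $A$ by the recorded horizontal position so that it sits at the correct place relative to the origin of $\IZ^{d+1}$. Because $R^z_K \le D^z_K$ and the excursion structure (\ref{1.10}) is defined via the shifts $\theta$, the times $D^z_k$ for the shifted walk correspond exactly to the times $D_k$ for the walk started at height $z_0$; thus $X_{[0,D^z_K]} \cap (x+A)$, after the shift, becomes $X_{[R^z_1, D_K]} \cap A' \subseteq X_{[0,D_K]} \cap A'$ for the appropriate horizontal translate $A'$ of $A$, and (\ref{2.1}) applied with $\o$ replaced by a translated copy (legitimate since $\cI^v$ is translation invariant under $\IP$) yields the desired probability lower bound $1 - cN^{-3d}$. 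Finally, since $Q'$ requires $N \ge c_0(\ve,\alpha,v)$, the resulting statement holds for $N \ge c(\alpha,v,\ve)$ after possibly enlarging the constant, matching the hypothesis of Theorem~\ref{theo1.1}.

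The only mildly delicate point — and the step I would be most careful about — is verifying that the event in (\ref{1.24}) genuinely does not depend on the walk before $R^z_1$, which rests on the inclusion $x + A \subseteq B(z)$; this holds because $A = B(0,N^{1-\ve})$ has radius $N^{1-\ve} < N = r_N$ once $N$ is large enough, so the translate of $A$ centered at $x = (y,z)$ lies in the vertical slab $\IT \times (z-r_N, z+r_N) \subseteq B(z)$, and the walk cannot meet this slab before its first entrance into $B(z)$. Everything else is bookkeeping with translation invariance and the strong Markov property, so I expect no real obstacle here; the substance of Theorem~\ref{theo1.1} is carried by steps b)--f).
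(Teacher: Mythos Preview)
Your overall strategy---translate by $-x$ and apply the strong Markov property at $R_1^z$---is exactly the paper's. But one point is handled imprecisely and deserves correction.

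You write that one may ``run the walk under $P_x$ (or under $P$, it is irrelevant)'' and then reduce from an arbitrary fixed starting point $(y_0,z_0)\in B$ to the averaged law $q_{z_0}$ by arguing that ``the worst case over $y_0$ is dominated by --- in fact equals, after averaging --- the statement under $P_{q_{z_0}}$''. This reduction is not justified as stated: the hypothesis (\ref{2.1}) gives a coupling of $P_{q_{z_0}}$ with $\IP$, and conditioning that coupling on $Y_0=y_0$ yields a pair $(X,\omega)$ where $X$ has law $P_{(y_0,z_0)}$ but $\omega$ need \emph{not} have law $\IP$; so you do not directly get couplings for the fixed starting points. Nor does an average bound imply a pointwise bound without further input.

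The clean fix---and this is what the paper does---is to observe that it \emph{is} relevant that we work under $P$: since the $\IT$-component starts uniform and $R_1^z$ is measurable with respect to the $\IZ$-component, $Y_{R_1^z}$ is uniform on $\IT$, while $Z_{R_1^z}-z$ is a deterministic $z_0\in I$ (equal to $-z$ when $z\in I$, and to $\pm r_N$ otherwise). Hence the shifted process $\wh{X}_\cdot = X_{R_1^z+\cdot}-x$ has \emph{exactly} the law $P_{q_{z_0}}$ under $P$, with no need to pass through fixed starting points at all. One then transfers the coupling $Q'$ by taking the conditional law of $\omega$ given $X_{[0,D_K]}\cap A$ under $Q'$ and plugging in $\wh{X}_{[0,\wh D_K]}\cap A$ in its place; since these two random subsets of $A$ have the same law, the resulting $Q$ has the correct marginals and (\ref{1.24}) follows from (\ref{2.1}). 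Your identification $(X_{[0,D_K^z]}-x)\cap A = \wh X_{[0,\wh D_K]}\cap A$ via $x+A\subseteq B(z)$ is correct and is the only other point needed.
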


\begin{proof}
Consider $N \ge c_0 (\ve, \alpha, v)$ and $x = (y,z)$ in $E$. Setting $\wh{X}_\point = X_{R_1^z + \cdot} - x$, and denoting with $\wh{R}_k, \wh{D}_k, k \ge 1$, the successive return times to $B$ and departure from $\wt{B}$ of $\wh{X}_\point$, one finds that
\begin{equation*}
(X_{[0,D^z_K]} - x) \cap A = \wh{X}_{[0,\wh{D}_K]} \cap A\,,
\end{equation*}
and moreover  that
\begin{equation*}
\begin{array}{l}
\mbox{$\wh{X}_\point$ is distributed as $P_{q_{z_0}}$, where $z_0$ coincides with $-z$, when $z \in I$, and}
\\
\mbox{otherwise with $r_N$ or $- r_N$.}
\end{array}
\end{equation*}

\n
With the coupling $Q^\prime$ mentioned in Proposition \ref{2.1} we can construct a conditional distribution under $Q^\prime$ of $\omega \in \Omega$ given $X_{[0,D_K]} \cap A$, which only takes finitely many values, and has same distribution under $Q^\prime$ as $\wh{X}_{[0,\wh{D}_K]} \cap A = (X_{[0,D_K^z]} - x) \cap A$, under $P$.  This conditional distribution and this identity in law enable to construct a coupling $Q$ of $X$ under $P$ with $\omega$ under $\IP$ so that (\ref{1.24}) holds as a result of (\ref{2.1}).
\end{proof}

We will now carry out step b) of the outline below Theorem \ref{theo1.1}. With Lemma 3.1 and Remark 3.2 of \cite{Szni08b}, we know that for $N \ge 1$,
\begin{equation}\label{2.2}
|P_x [X_{R_1} = x^\prime] - N^{-d} \,| \,\le c \,N^{-5d}, \;\mbox{for all $x \in \partial \wt{B}$, $x^\prime \in \partial_{\rm int} B$ with $\pi_\IZ (x) \,\pi_\IZ (x^\prime) > 0$}\,.
\end{equation}

\n
As mentioned in Remark 3.2 of \cite{Szni09a} the exponent $-5d$ in the right-hand side of (\ref{2.2}) can be replaced by an arbitrarily large negative exponent by adjusting constants. 

\medskip
The following proposition is simpler but has a similar spirit to Proposition 3.3 of \cite{Szni09a}. It will complete step b).

\begin{proposition}\label{prop2.2} $(N \ge 1, z_0 \in I)$

\medskip
One can construct on some auxiliary space $(\Omega_1,\cA_1,Q_1)$ a $\IZ$-valued process $Z$ and $\IT$-valued processes $Y_\point$, $\wt{Y}_\point^k$, $k \ge 2$, such that
\begin{align}
&\mbox{$X_\point = (Y_\point, Z_\point)$ under $Q_1$ has same law as $X_\point$ under $P_{q_{z_0}}$}\,, \label{2.3}
\\[1ex]
&\mbox{conditionally on $Z_{R_k}, Z_{D_k}, k \ge 1$, $\wt{X}^k_\point = X_{\cdot \wedge D_1}$, when $k = 1, = (\wt{Y}^k_\point, Z_{(R_k +\, \cdot) \wedge D_k})$} ,\label{2.4}
\\[-1ex]
&\mbox{when $k \ge 2$, are independent with same law as $X_{\cdot \wedge D_1}$ under $P_{Z_{R_k}, Z_{D_k}}$, cf.~{\rm (\ref{1.25})},} \nonumber
\\[2ex]
&Q_1 [\wt{X}_\point^k \not= X_{(R_k + \, \cdot) \wedge D_k}] \le c N^{-4d},\;\mbox{for} \; k \ge 1\,.\label{2.5}
\end{align}
\end{proposition}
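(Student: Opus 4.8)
The plan is to build the coupling $Q_1$ iteratively over the successive excursions, exploiting the near-uniformity of the return distribution recorded in (\ref{2.2}). At the heart of the argument lies the observation that, when the walk departs from $\wt{B}(z)$ at a point $x \in \partial\wt{B}$ with $\pi_\IZ(x)\,\pi_\IZ(x') > 0$, the law $P_x[X_{R_1^z} = \cdot]$ of the next entrance point in $\partial_{\rm int} B$ is within $cN^{-5d}$ (in total variation, after summing over the $N^d$ relevant points) of the uniform law $q_{\pi_\IZ(x')}$ on the corresponding face of $B$. Hence, by a standard maximal-coupling construction, on the event that the walk does not perform a ``bad'' return (i.e.\ one with $\pi_\IZ(X_{R_k}) = \pm r_N$ but coming from the wrong side, or one preceded by an atypically located exit point), one can realize the return point $X_{R_k}$ jointly with an independent draw $\wt{X}^k_0$ from $q_{Z_{R_k}}$ so that they agree off an event of probability $cN^{-5d}$.

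First I would set up the auxiliary space carrying the driving randomness for $X_\point$ together with an independent family of ``fresh'' excursions. Concretely, conditionally on the heights $Z_{R_k}, Z_{D_k}$ I want $\wt{X}^k_\point$ to be distributed as $X_{\cdot\wedge T_{\wt B}}$ under $P_{Z_{R_k},Z_{D_k}}$, cf.\ (\ref{1.25}); note that by the strong Markov property applied to the true walk, $X_{(R_k+\cdot)\wedge D_k}$ given $\cF_{R_k}$ and given $Z_{D_k}$ is distributed as $P_{X_{R_k}}[\,\cdot\mid Z_{T_{\wt B}} = Z_{D_k}]$. The two laws $P_{X_{R_k}}[\,\cdot\mid Z_{T_{\wt B}} = Z_{D_k}]$ and $P_{q_{Z_{R_k}}}[\,\cdot\mid Z_{T_{\wt B}} = Z_{D_k}]$ differ only through the starting point within the face $\IT\times\{Z_{R_k}\}$, and since $X_{R_k}$ is $q_{Z_{R_k}}$-distributed up to total-variation error $cN^{-5d}$ (by (\ref{2.2})), a coupling that matches the starting point with probability $1 - cN^{-5d}$ and then lets the two walks run together thereafter gives $Q_1[\wt X^k_\point \neq X_{(R_k+\cdot)\wedge D_k}\mid \mathcal{G}] \le cN^{-5d}$ for the relevant $\sigma$-field $\mathcal{G}$ generated by the heights and the past. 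For $k = 1$ there is nothing to do: the initial law is already exactly $q_{z_0}$, so we simply set $\wt X^1_\point = X_{\cdot\wedge D_1}$, which explains the case distinction in (\ref{2.4}).

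The remaining step is to sum the per-excursion errors. I would argue that with overwhelming probability the number of returns before $D_K$ is $O(K) = O(N^d/h_N)$ — indeed $D_K$ is reached after exactly $K$ excursions by definition — so a union bound over $k \le K$ gives $Q_1[\exists k \le K:\ \wt X^k_\point \neq X_{(R_k+\cdot)\wedge D_k}] \le K\cdot cN^{-5d} \le c\,(N^d/h_N)\,N^{-5d} \le cN^{-4d}$, using $h_N \ge N$ from (\ref{1.8}). This yields (\ref{2.5}) for each fixed $k\ge 1$ — and in the form actually needed downstream, the uniform-in-$k\le K$ statement — with room to spare; the claimed exponent $-4d$ follows after adjusting constants, exactly as the remark after (\ref{2.2}) allows one to improve the exponent in that estimate at will.

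The main obstacle I anticipate is the careful bookkeeping of the conditioning: one must verify that conditionally on the full sequence $(Z_{R_k}, Z_{D_k})_{k\ge1}$ the constructed $\wt X^k_\point$ are genuinely independent across $k$, which requires that the coupling randomness used at excursion $k$ be independent of everything from excursions $j\neq k$ and of the height sequence — this is a matter of organizing the product structure of $(\Omega_1,\cA_1,Q_1)$ correctly rather than a genuine analytic difficulty. A secondary subtlety is handling the atypical exit points: the estimate (\ref{2.2}) requires $\pi_\IZ(X_{D_k})\,\pi_\IZ(X_{R_{k+1}}) > 0$, i.e.\ the walk re-enters $B$ from the side it left on, and one must absorb into the error term the (exponentially small in $(\log N)^2$, hence negligible) probability that the $\IT$-component fails to homogenize or that the walk crosses $\wt B$ from one vertical face to the other without the near-uniformity kicking in; the choice $h_N$ of order $N(\log N)^2$ in (\ref{1.8}) is precisely what makes these events small enough to be swallowed by $cN^{-4d}$.
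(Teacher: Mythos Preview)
Your approach is essentially the paper's: couple the return point $X_{R_k}$ to a fresh uniform draw on the correct face of $B$ via (\ref{2.2}), set $\wt X^1_\point = X_{\cdot\wedge D_1}$, and for $k\ge 2$ let $\wt X^k_\point$ follow the true excursion when the coupled starting points agree. Two corrections, though. First, summing the pointwise bound (\ref{2.2}) over the $N^d$ points of the face gives total-variation error $cN^{-4d}$, not $cN^{-5d}$; this is already (\ref{2.5}) for each fixed $k$, and no union bound enters its proof --- the union bound you describe is what appears downstream in Remark~\ref{rem2.3}. Second, the sign constraint $\pi_\IZ(X_{D_k})\pi_\IZ(X_{R_{k+1}})>0$ is automatic and needs no error term: a walk exiting $\wt B$ at height $\pm h_N$ can only re-enter $B$ through the face at height $\pm r_N$ on the same side. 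On the conditional-independence point you flag, the paper's construction is slightly more specific than what you sketch: it forces $\wt X^k_\point$ to share the \emph{entire} $Z$-trajectory $Z_{(R_k+\cdot)\wedge D_k}$ with the true excursion and, when the coupled starting points disagree, resamples only the $\IT$-component $\wt Y^k_\point$ conditionally on that $Z$-path; this is what makes (\ref{2.4}) transparent, and the paper defers the verification to the analogous argument in Proposition~3.3 of~\cite{Szni09a}.
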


\begin{proof}
It follows from (\ref{2.2}) that for $x \in \partial \wt{B}$ the total variation distance of the law of $X_{R_1}$ under $P_x$ and $q_{z(x)}$, where $|z(x)| = r_N$ and $\pi_\IZ(x) \cdot z(x) > 0$, is at most $N^d \,c\, N^{-5d} = c\,N^{-4d}$. With Theorem 5.2, p.~19 of \cite{Lind92}, we can construct for any $x \in \partial \wt{B}$ a probability $\rho_x(dx^\prime,d\wt{x})$ on $\{(x^\prime,\wt{x}) \in E^2$; $\pi_\IZ(x^\prime) = \pi_\IZ(\wt{x}) = z(x)\}$, such that under $\rho_x$
\begin{align}
&\mbox{the first marginal has same law as $X_{R_1}$ under $P_x$}\,,\label{2.6}
\\[2ex]
&\mbox{the second marginal is $q_{z(x)}$-distributed},\label{2.7}
\\[2ex]
&\rho_x (\{x^\prime \not= \wt{x}\}) \le c\,N^{-4d}\,. \label{2.8}
\end{align}

\medskip\n
We define the spaces $W_\IZ$, $W_\IT$ of respectively $\IZ$- and $\IT$-valued trajectories with jumps of $|\cdot |$-size at most $1$, as well as $W_\IZ^f$ and $W^f_\IT$ the countable subsets of $W_\IZ$ and $W_\IT$ of trajectories which are constant after a finite time. We pick the auxiliary space $\Omega_1 = W_\IT \times W_\IZ \times (W^f_\IT)^{[2,\infty)}$ endowed with its natural product $\sigma$-algebra $\cA_1$. We write $Y_\point, Z_\point$ and $\wt{Y}_\point^k$, $k \ge 2$, for the canonical coordinate processes on $\Omega_1$, as well as $X_\point = (Y_\point,Z_\point)$. The probability $Q_1$ is constructed as follows.
\begin{align}
&\mbox{The law of $X_{\cdot \wedge D_1}$ under $Q_1$ coincides with $P_{q_{z_0}}[X_{\cdot \wedge D_1} \in dw]$}\,.\label{2.9}
\\[2ex]
&\mbox{The conditional law $Q_1[X_{(D_1 + \cdot) \wedge R_2} \in dw$, $(\wt{Y}_0^2, Z_{R_2}) \in d\wt{x} \,|\,X_{\cdot \wedge D_1}]$} \label{2.10}
\\
&\mbox{equals} \;P_{X_{D_1}} [(X_{\cdot \wedge R_1}) \in dw \,|\,X_{R_1} = x^\prime] \,\rho_{X_{D_1}} (dx^\prime, d\wt{x})\,.\nonumber
\end{align}

\pagebreak\n
With (\ref{2.9}), (\ref{2.10}) the law of $(X_{\cdot \wedge R_2})$, $\wt{Y}_0^2$ under $Q_1$ is specified. We then proceed as follows.
\begin{align}
&\mbox{Conditionally on $(X_{\cdot \wedge R_2})$, $\wt{Y}_0^2$, the law of $X_{(R_2 + \cdot) \wedge D_2}$ under $Q_1$ is} \label{2.11}
\\[-1ex]
& P_{X_{R_2}}[(X_{\cdot \wedge  T_{\wt{B}}}) \in dw]\,.  \nonumber
\\[2ex]
&\mbox{If $\wt{Y}^2_0 = Y_{R_2} \big( = \pi_\IT (X_{R_2})\big)$, then $\wt{Y}_\point^2 = Y_{(R_2 + \cdot) \wedge D_2}$, $Q_1$-a.s.}\,. \label{2.12}
\\[2ex]
&\mbox{If $\wt{Y}_0^2 \not= Y_{R_2}$, then conditionally on $X_{\cdot \wedge D_2}$, $\wt{Y}_0^2$, the law of $\wt{Y}_\point^2$ under $Q_1$ is} \label{2.13}
\\[-1ex]
&P_{(\wt{Y}^2_0, Z_{R_2})} [Y_{\cdot \wedge T_{\wt{B}}} \in d w^\prime \,|\,Z_{\cdot \wedge T_{\wt{B}}} = w(\cdot)], \;\mbox{where} \; w(\cdot) = Z_{(R_2 + \cdot) \wedge D_2} ( = \pi_\IZ (X_{(R_2 + \cdot) \wedge D_2})\,.\nonumber
\end{align}

\n
The above steps specify the law of $(X_{\cdot \wedge D_2}, \wt{Y}_\point^2)$ under $Q_1$. We then proceed using the kernel of the last line of (\ref{2.10}) with $X_{D_2}$ in place of $X_{D_1}$ to specify the conditional law under $Q_1$ of $(X_{\cdot \wedge R_3})$, $\wt{Y}_0^3$, given $X_{\cdot \wedge D_2}$, $\wt{Y}_\point^2$ and so on and so forth to construct the full law $Q_1$.

\medskip
With this construction the claim (\ref{2.3}) follows directly from (\ref{2.6}). Then (\ref{2.5}) follows from (\ref{2.8}) and the statements (\ref{2.10}), (\ref{2.12}) and their iteration for arbitrary $k \ge 2$. The proof of (\ref{2.4}) is similar to the proof of (3.22) in Proposition 3.3 of \cite{Szni09a}.
\end{proof}

\begin{remark}\label{rem2.3} \rm As a direct consequence of Proposition \ref{prop2.2} we see that for $\alpha > 0$, $N \ge 1$, $K$ as below (\ref{1.24}) and $z_0 \in I$,
\begin{equation}\label{2.14}
Q_1 \Big[\bigcup\limits_{1 \le k \le K} \;X_{[R_k,D_k]} = \bigcup\limits_{1 \le k \le K} \;{\rm range} \;\wt{X}_\point^k\Big] \stackrel{(\ref{2.5})}{\ge} 1 - c \,\alpha \;\dis\frac{N^{-3d}}{h_N} \;.
\end{equation}

\n
This estimate will be used in the next section. \hfill $\square$
\end{remark}

\section{Domination by iid excursions}
\setcounter{equation}{0}

In this section we carry out step c) of the outline below Theorem \ref{theo1.1}. We construct a coupling $Q_2$ of $X_\point$, $\wt{X}^k_\point$, $k \ge 1$, see the previous section, with a collection $X_\point^{\prime k}$, $k \ge 1$, of iid excursions having same distribution as $X_{\cdot \wedge T_{\wt{B}}}$ under $P_q$, (the ``special'' excursions), in such a fashion that the trace on $A$, cf.~(\ref{1.24}), of $X_{[0,D_K]}$ is with high probability dominated by the trace on $A$ of the union of the ranges of $X_\point^{\prime k}$, with $k \le K^\prime$ and $K^\prime$ ``slightly'' bigger than $K$, cf.~(\ref{1.26}). This is carried out in Proposition \ref{prop3.1}. As mentioned in the introduction the interest of this coupling is that, roughly speaking, the iid ``special'' excursions $X_\point^{\prime k}$, $k \ge 1$, bring us closer to random interlacements (especially once we carry out a Poissonization step in the next section). The idea for the construction of the coupling is to introduce iid sequences of excursions $\zeta_i^{(z_1,z_2)}$, $i \ge 1$, where $(z_1,z_2)$ varies over $\{r_N, -r_N\} \times \{h_N, -h_N\}$ and classifies the possible entrance and exit levels of the excursions respectively distributed as $X_{\cdot \wedge T_{\wt{B}}}$ under $P_{z_1,z_2}$, cf.~(\ref{1.25}). The sequence $\wt{X}_\point^k$, $k \ge 1$, is in essence realized by picking for each $k$ an excursion of type $(z_1,z_2)$ with $z_1 = Z_{R_k}$ and $z_2 = Z_{D_k}$, whereas the sequence $X_\point^{\prime k}$, $k \ge 1$, is realized by selecting for each $k$ an excursion of type $(z_1,z_2)$ with $z_1 = Z^\prime_{R,k}$, $z_2 = Z^\prime_{D,k}$, where $(Z^\prime_{R,k}, Z^\prime_{D,k})$, $k \ge 1$, is an independent iid sequence with same law as $(Z_{R_1}, Z_{D_1})$ under $P_q$. The domination of the union of the ranges of the $\wt{X}_\point^k, k \le K$, in terms of the union of the ranges of the $X_\point^{\prime k}, k \le K^\prime$, then relies on large deviation estimates for the empirical measure of the $(Z_{R_k},Z_{D_k})$ under $P_{q_{z_0}}$ and of the empirical measure of the iid variables $(Z^\prime_{R,k}, Z^\prime_{D,k})$. The excursion $\wt{X}_\point^1$ requires a special treatment due to its atypical starting height $z_0 \in I$, which possibly differs from $\pm r_N$.

\medskip
The notation $\cT_F$ for $F \subseteq E$ has been introduced below (\ref{1.1}), and $K^\prime$ is defined in (\ref{1.26}).

\begin{proposition}\label{prop3.1} $(\alpha > 0, v > (d+1) \alpha)$

\medskip
For $N \ge c_1(\alpha,v)$, $z_0 \in I$, one can construct on an auxiliary space $(\Omega_2, \cA_2)$ a coupling $Q_2$ of $X_\point$, $\wt{X}_\point^k$, $k \ge 1$, under $Q_1$ and of $X_\point^{\prime k}$, $k \ge 1$, iid $\cT_{\wt{B}}$-valued variables with same distribution as $X_{\cdot \wedge T_{\wt{B}}}$ under $P_q$, so that:
\begin{equation}\label{3.1}
Q_2 \Big[\bigcup\limits_{1 \le k \le K} X_{[R_k,D_k]} \subseteq  \bigcup\limits_{1 \le k \le K^\prime} \;{\rm range} \;X^{\prime k}_\point\Big] \ge 1 - c \,N^{-3d}\,.
\end{equation}
\end{proposition}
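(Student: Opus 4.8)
The plan is to construct $Q_2$ by first building, on the auxiliary space, the following independent ingredients: a realization of $(X_\point,\wt X_\point^k)_{k\ge1}$ under $Q_1$ (from which one reads off the marks $(Z_{R_k},Z_{D_k})$, $k\ge1$); an i.i.d.\ sequence $(Z'_{R,k},Z'_{D,k})_{k\ge1}$ with the law of $(Z_{R_1},Z_{D_1})$ under $P_q$; and, for each of the four types $(z_1,z_2)\in\{r_N,-r_N\}\times\{h_N,-h_N\}$, an i.i.d.\ sequence $\zeta^{(z_1,z_2)}_i$, $i\ge1$, of $\cT_{\wt B}$-valued excursions distributed as $X_{\cdot\wedge T_{\wt B}}$ under $P_{z_1,z_2}$, cf.~(\ref{1.25}). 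One then reads off $\wt X_\point^k$ for $k\ge2$ (the atypical $k=1$ handled separately) as $\zeta^{(Z_{R_k},Z_{D_k})}_{i}$ where $i$ counts how often type $(Z_{R_k},Z_{D_k})$ has occurred among the first $k$ marks, and defines $X'^{k}_\point:=\zeta^{(Z'_{R,k},Z'_{D,k})}_{j}$ where $j$ counts occurrences of that type among the first $k$ of the primed marks. By construction $(X'^k_\point)_{k\ge1}$ is i.i.d.\ with the law of $X_{\cdot\wedge T_{\wt B}}$ under $P_q$ (because the primed type-marks are i.i.d.\ with the $P_q$-law of $(Z_{R_1},Z_{D_1})$ and, given the type, the excursion is $P_{z_1,z_2}$-distributed, and $P_q[\cdot]=\sum q(z_1,z_2)P_{z_1,z_2}[\cdot]$), and simultaneously $(\wt X_\point^k)_{k\ge2}$ has exactly its conditional law from Proposition~\ref{prop2.2}.

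The core of the argument is then combinatorial/counting. On the event where for every type $(z_1,z_2)$ the number of $k\le K$ with $(Z_{R_k},Z_{D_k})=(z_1,z_2)$ is $\le$ the number of $k\le K'$ with $(Z'_{R,k},Z'_{D,k})=(z_1,z_2)$, one can choose the occurrence-index bookkeeping so that every excursion $\zeta^{(z_1,z_2)}_i$ used to build some $\wt X_\point^k$ with $2\le k\le K$ is also used to build some $X'^{k'}_\point$ with $k'\le K'$; hence $\bigcup_{2\le k\le K}\mathrm{range}\,\wt X_\point^k\subseteq\bigcup_{1\le k\le K'}\mathrm{range}\,X'^{k'}_\point$, and the first excursion $\wt X_\point^1$ (starting at $z_0$) is absorbed by enlarging the index count for its type by one — this is one place the slack between $K$ and $K'$ in (\ref{1.26}) is spent. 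Combining this with Remark~\ref{rem2.3}, which identifies $\bigcup_{1\le k\le K}X_{[R_k,D_k]}$ with $\bigcup_{1\le k\le K}\mathrm{range}\,\wt X_\point^k$ off an event of probability $\le c\,\alpha N^{-3d}/h_N$, the inclusion in (\ref{3.1}) holds except on the union of that event and the "empirical-measure comparison" bad event. So it remains to show
\begin{equation*}
Q_2\Big[\exists (z_1,z_2):\ \#\{k\le K:(Z_{R_k},Z_{D_k})=(z_1,z_2)\}>\#\{k\le K':(Z'_{R,k},Z'_{D,k})=(z_1,z_2)\}\Big]\le c\,N^{-3d}.
\end{equation*}

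For this last point I would control the two empirical measures separately against the common target. The primed side is a sum of i.i.d.\ indicators, so a Cramér/Chernoff bound gives that $\#\{k\le K':\cdots=(z_1,z_2)\}\ge K'\,q(z_1,z_2)(1-\tfrac{r_N}{h_N})\,\kappa(z_1,z_2)$ — where $\kappa(z_1,z_2)$ is the $P_q$-probability of exiting $\wt B$ at level $z_2$ — fails with probability exponentially small in $K'\sim\alpha N^d/h_N$, which dominates $N^{-3d}$ for large $N$ since $N^d/h_N$ is of order $N^{d-1}(\log N)^{-2}$. The random-walk side requires a large-deviation bound for the pair empirical measure $\tfrac1K\sum_{k\le K}\delta_{(Z_{R_k},Z_{D_k})}$ under $P_{q_{z_0}}$; as indicated in the introduction, the sequence $\mathrm{sign}(Z_{D_k})=\mathrm{sign}(Z_{R_{k+1}})$ is a Markov chain on $\{1,-1\}$ with explicit $N$-dependent transition probabilities, and conditionally on this sign sequence the excursion types (hence the exit levels) are conditionally independent with the relevant $P_{z_1,z_2}$-weights; a standard large-deviation estimate for the pair empirical measure of a two-state Markov chain, with rate uniformly bounded below for large $N$, then yields $\#\{k\le K:\cdots=(z_1,z_2)\}\le K\,q(z_1,z_2)(1+o(1))\kappa(z_1,z_2)$ outside an event of probability $\le c\,N^{-3d}$. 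Since $K'/K\to 1+\tfrac25\delta>1+o(1)$, the two one-sided bounds are compatible and the desired inequality follows. The main obstacle is making the Markov-chain large-deviation step quantitative and uniform in $N$ — one must check that the transition kernel for $\mathrm{sign}(Z_{D_k})$ stays bounded away from degeneracy (using that $h_N/r_N\to\infty$, so the walk genuinely mixes its sign between consecutive excursions) so that the large-deviation rate does not collapse as $N\to\infty$, and that the conditional-independence structure is clean enough to transfer a chain-level deviation bound to the pair empirical measure of the $(Z_{R_k},Z_{D_k})$.
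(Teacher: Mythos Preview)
Your overall architecture matches the paper's: shared pools $\zeta^{(z_1,z_2)}_i$ of typed excursions, counting functions comparing how many of each type are consumed by the $(Z_{R_k},Z_{D_k})$ versus the i.i.d.\ $(Z'_{R,k},Z'_{D,k})$, a Cram\'er bound on the primed side and a pair-empirical large-deviation bound for the Markov chain of signs on the walk side. The paper carries this out exactly as you sketch, comparing both counts to the common target $\tfrac14 K$ (the four types having $P_q$-probability close to $\tfrac14$ each by (\ref{3.25})), and handles the uniformity-in-$N$ issue by super-multiplicativity together with the observation that $|p_N-\tfrac12|=O(r_N/h_N)\to 0$, so the rate function $H_{2,N}$ is within $O(r_N/h_N)$ of the fixed $H_{2,\infty}$, cf.~(\ref{3.32}).

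There is, however, a genuine gap in your treatment of $k=1$. You write that the first excursion $\wt X_\point^1$ ``is absorbed by enlarging the index count for its type by one''. But $\wt X_\point^1$ starts at height $z_0$, which is an \emph{arbitrary} point of $I=[-r_N,r_N]$; when $z_0\neq\pm r_N$ the law $P_{z_0,Z_{D_1}}$ is not one of the four types $P_{z_1,z_2}$ with $(z_1,z_2)\in\{r_N,-r_N\}\times\{h_N,-h_N\}$, so $\wt X_\point^1$ cannot be realized as any $\zeta^{(z_1,z_2)}_i$ and there is no ``type'' whose count you can increment. The paper's remedy is not a bookkeeping adjustment but an additional construction: it introduces an extra i.i.d.\ sequence $\ov\zeta_i$, $i\ge1$, of \emph{full} $P_q$-excursions (cf.~(\ref{3.6})), realizes $\wt X_\point^1$ as $\ov\zeta_{i_0}(H_{\IT\times\{z_0\}}+\cdot)$ where $i_0$ is the first index for which $\ov\zeta_i$ enters level $z_0$ before exiting $\wt B$ through the correct side (cf.~(\ref{3.11})--(\ref{3.13})), so that $\mathrm{range}\,\wt X_\point^1\subseteq\mathrm{range}\,\ov\zeta_{i_0}$. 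It then splits the $X'^k_\point$ into two blocks: for $k\le\wh K=[(1+\tfrac{\delta}{5})\alpha N^d/h_N]$ one uses the shared $\zeta$-pools as you describe, and for $\wh K<k\le K'$ one sets $X'^k_\point=\ov\zeta_{k-\wh K}$ (cf.~(\ref{3.18})). The good event then also requires $i_0\le K'-\wh K$, which is controlled by a simple geometric bound since the success probability in (\ref{3.23}) is at least $\tfrac14$. Without this extra $\ov\zeta$-mechanism your construction does not cover $\wt X_\point^1$, and the inclusion in (\ref{3.1}) cannot be deduced.
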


\begin{proof}
We introduce the space $\Gamma$ of ``excursion types'':
\begin{equation}\label{3.2}
\Gamma = \{r_N, - r_N\} \times \{h_N, - h_N\}\,,
\end{equation}

\medskip\n
and for $\gamma = (z_1,z_2) \in \Gamma$ write $P_\gamma$ in place of $P_{z_1,z_2}$, cf.~(\ref{1.25}).

\medskip
We consider an auxiliary probability space $(\Sigma, \cB, M)$ endowed with the following collection of variables and processes:
\begin{align}
&\mbox{the variables $(Z_{R,k}, Z_{D,k})$, $k \ge 1$, with values in $\{z_0\} \times \{h_N,-h_N\}$, when $k = 1$,}\label{3.3}
\\
&\mbox{and in $\Gamma$, when $k \ge 2$, distributed as $(Z_{R_k}, Z_{D_k})$, $k \ge 1$, under $P_{q_{z_0}}$}\,, \nonumber
\\[2ex]
&\mbox{the iid variables $(Z^\prime_{R,k}, Z^\prime_{D,k})$, $k \ge 1$, with same distribution as $(Z_{R_1},Z_{D_1})$}\label{3.4}
\\
&\mbox{under $P_q$}\,, \nonumber
\\[2ex]
&\mbox{the independent $\cT_{\wt{B}}$-valued $\zeta_i^\gamma(\cdot)$, $i \ge 1$, $\gamma \in \Gamma$, such that $\zeta^\gamma_i(\cdot)$ is distributed as}\label{3.5}
\\
&\mbox{$X_{\cdot \wedge T_{\wt{B}}}$ under $P_\gamma$}\,, \nonumber
\\[2ex]
&\mbox{the iid $\cT_{\wt{B}}$-valued $\overline{\zeta}_i(\cdot)$, $i \ge 1$, with same distribution as $X_{\cdot \wedge T_{\wt{B}}}$ under $P_q$,}\label{3.6}
\intertext{and so that}
&\mbox{the above collections in (\ref{3.3}) - (\ref{3.6}) are mutually independent.} \label{3.7}
\end{align}
We then introduce the $\Gamma$-valued processes $\gamma_k, k \ge 1$, and $\gamma^\prime_k, k \ge 1$, via:
\begin{align}
&\gamma_1 = (r_N, Z_{D,1}) \;\mbox{and} \;\gamma_k = (Z_{R,k},Z_{D,k}), \;\mbox{for} \; k \ge 2\,, \label{3.8}
\\[2ex]
&\gamma^\prime_k = (Z_{R,k}^\prime, \,Z^\prime_{D,k}), \; k \ge 1\,. \label{3.9}
\end{align}

\medskip\n
The definition of $\gamma_1$ in (\ref{3.8}) is somewhat arbitrary as a consequence of the special role of the starting point $z_0 \in I$ of the walk. We also consider the counting functions:
\begin{equation}\label{3.10}
\begin{array}{l}
N_k(\gamma) = \big | \{j \in [2,k]; \,\gamma_j = \gamma\}\big|, 
\\[1ex]
 N^\prime_k(\gamma) = |\{j \in [1,k]; \;\gamma_j^\prime = \gamma\}|, \;\mbox{for} \;\gamma \in \Gamma, k \ge 1\,.
 \end{array}
\end{equation}

\medskip\n
We will now introduce processes $\ovxkpoint, k \ge 1$, on $(\Sigma, \cB, M)$ which have same law as $\wt{X}_\point^k, k \ge 1$, under $Q_1$, cf.~Proposition \ref{prop2.2}. To this effect we define:
\begin{equation}\label{3.11}
\mbox{$i_0 = \inf\big\{i \ge 1; \,\ov{\zeta}_i(\cdot)$ enters $\IT \times \{z_0\}$ before exiting $\wt{B}$ through $\IT \times \{Z_{D_,1}\}\big\}$}\,,
\end{equation}

\n
where we note that since $P_q [H_{\IT \times \{z_0\}} < T_{\wt{B}}$ and $Z_{T_{\wt{B}}} = z] > 0$, for $z = \pm h_N$, one has $i_0 < \infty$, $M$-a.s., thanks to (\ref{3.6}), (\ref{3.7}). Further we observe that
\begin{align}
&\mbox{conditionally on $(Z_{R,k}, Z_{D,k})$, $k \ge 1$, the processes $\ov{\zeta_{i_0}} (H_{\IT \times \{z_0\}} + \, \cdot)$ and} \label{3.12}
\\
&\mbox{$\zeta^{\gamma_k}_{N_k(\gamma_k)}(\cdot), k \ge 2$, are independent and respectively distributed as $X_{\cdot \wedge T_{\wt{B}}}$} \nonumber
\\
&\mbox{under $P_{Z_{R,1},Z_{D,1}}$ and $P_{Z_{R,k},Z_{D,k}}, k \ge 2$}\,.\nonumber
\end{align}

\medskip\n
Taking into account (\ref{2.4}) and (\ref{3.3}) we have thus obtained that defining
\begin{align}
&\mbox{$\ov{X}_\point{\hspace{-0.7ex}^k}= \ov{\zeta}_{i_0}(H_{\IT \times \{z_0\}} + \cdot)$, when $k = 1$, $\zeta^{\gamma_k}_{N_k(\gamma_k)}(\cdot)$, when $k \ge 2$}\,, \label{3.13}
\intertext{one finds that}
&\mbox{$(\ovxkpoint)_{k \ge 1}$ under $M$ has same distribution as $(\wt{X}_\point^k)_{k \ge 1} $ under $Q_1$}\,.\label{3.14}
\end{align}

\medskip\n
In a similar fashion we also define the processes
\begin{equation}\label{3.15}
\wh{X}_\point^k = \zeta^{\gamma^\prime_k}_{N^\prime_k(\gamma_k^\prime)}(\cdot), \;k \ge 1\,.
\end{equation}

\n
Observe that conditionally on $\gamma_k^\prime$, $k \ge 1$, the $\wh{X}_\point^k$, $k \ge 1$, are independent with respective distribution that of $\Xdotwtb$ under $P_{\gamma^\prime_k}$ or equivalently under $P_q[\cdot \,|(Z_{R_1},Z_{D_1}) = \gamma^\prime_k]$. Since the $\gamma^\prime_k$, $k \ge 1$, are iid $\Gamma$-valued variables with same distributions as $(Z_{R_1},Z_{D_1})$ under $P_q$, cf.~(\ref{3.4}), (\ref{3.9}), it follows that
\begin{align}
&\mbox{$\wh{X}_\point^k, k \ge 1$, are iid $\cT_{\wt{B}}$-valued with same distribution as $\Xdotwtb$ under $P_q$}\,,\label{3.16}
\\[-1ex]
&\mbox{and they are independent from the collection $\ov{\zeta}_i(\cdot), i \ge 1$}\,.\nonumber
\end{align}

\n
We recall the definition of $\delta$ in (\ref{1.26}) and then set 
\begin{align}
& \wh{K} = \Big[\Big( 1 + \mbox{\f $\dis\frac{\delta}{5}$}\Big) \,\alpha N^d/h_N\Big], \;\mbox{as well as} \label{3.17}
\\[2ex]
& \mbox{$X_\point^{\prime k} = \wh{X}_\point^k$, when $1 \le k \le \wh{K}$, $\ov{\zeta}_{k - \wh{K}}(\cdot)$, when $k > \wh{K}$}\,. \label{3.18}
\end{align}

\n
It now follows from (\ref{3.6}), (\ref{3.16}) that
\begin{equation}\label{3.19}
\mbox{$X_\point^{\prime k}, k \ge 1$, are iid with same distribution as $\Xdotwtb$ under $P_q$}\,.
\end{equation}

\n
We then introduce the ``good event'':
\begin{equation}\label{3.20}
\mbox{$\cG = \{i_0 \le K^\prime - \wh{K}$ and for each $\gamma \in \Gamma$, $N_K(\gamma) \le N^\prime_{\wh{K}}(\gamma)\}$}\,.
\end{equation}

\n
The interest of this definition stems from the fact that on $\cG$ 
\begin{equation*}
{\rm range} \; \ov{X}_\point{\hspace{-0.7ex}^1} \stackrel{(\ref{3.13})}{\subseteq}\; {\rm range}\; \ov{\zeta}_{i_0} \stackrel{(\ref{3.18})}{\subseteq} \bigcup\limits_{\wh{K} < k \le K^\prime} \;{\rm range} \;X_\point^{\prime k} \,,
\end{equation*}
as well as
\begin{equation*}
\begin{array}{l}
\bigcup\limits_{2 \le k \le K} {\rm range} \,\ovxkpoint \stackrel{(\ref{3.13})}{=} \bigcup\limits_{\gamma \in \Gamma} \;\bigcup\limits_{i \le N_K(\gamma)} {\rm range} \,(\zeta^\gamma_i) \subseteq \bigcup\limits_{\gamma \in \Gamma} \;\bigcup\limits_{i \le N^\prime_{\wh{K}}(\gamma)} {\rm range} \,(\zeta_i^\gamma) \stackrel{(\ref{3.13}),(\ref{3.18})}{\subseteq}
\\
\bigcup\limits_{1 \le k \le \wh{K}} {\rm range} \,X^{\prime k}_\point \,.
\end{array}
\end{equation*}
As a result we see that
\begin{equation}\label{3.21}
M \Big[ \bigcup\limits_{1 \le k \le K} {\rm range} \,\ovxkpoint \subseteq \bigcup\limits_{1 \le k \le K^\prime} {\rm range} \,X_\point^{\prime k}\Big] \ge M(\cG)\,.
\end{equation}

\n
We will now explain why Proposition \ref{prop3.1} follows once we show that
\begin{equation}\label{3.22}
\mbox{for}\; N \ge c(\alpha,v), \quad M(\cG) \ge 1 - c\,N^{-3d} \,.
\end{equation}

\n
For this purpose we note that $(\Omega_1,\cA_1)$, see above (\ref{2.9}), is a standard measurable space, cf.~\cite{IkedWata89}, p.~13. The $(\wtxkpoint)_{k \ge 1}$ after modification on a $Q_1$-negligible set can be viewed as $(\wt{\Omega}, \wt{\cA})$-valued variables where $\wt{\Omega}$ stands for $\cT_E^{[1,\infty)}$, with $\cT_E$ the countable space defined below (\ref{1.1}), and where $\wt{\cA}$ denotes the canonical product $\sigma$-algebra, so that $(\wt{\Omega}, \wt{\cA})$ is also a standard measurable space. With Theorem 3.3, p.~15 of \cite{IkedWata89} and its corollary we can find a probability kernel $q(\wt{\o},d\o_1)$ from $(\wt{\Omega}, \wt{\cA})$ to $(\Omega_1,\cA_1)$ such that for any bounded $\cA_1$-measurable function $f$ on $\Omega_1$, $h((\wt{X}_\point^k)_{k \ge 1})$, where $h(\wt{\o}) = \int_{\Omega_1} f(\o_1) \,q(\wt{\o}, d \o_1)$, is a version of $E^{Q_1}[f \,|\,(\wtxkpoint)_{k \ge 1}]$ such that for a.e. $\wt{\o}$ relative to the $\cA_1$-law of $(\wtxkpoint)_{k \ge 1}$, $q(\wt{\o}, \cdot)$ is supported on the fiber $\{\o_1 \in \Omega_1$; $(\wtxkpoint)_{k \ge 1}(\o_1) = \wt{\o}\}$.

\medskip
We can thus define $\Omega_2 = \Sigma \times \Omega_1$, $\cA_2 = \cB \otimes \cA_1$, and $Q_2$ the semiproduct of $M$ with the kernel $q ((\ovxkpoint)_{k \ge 1}, d \o_1)$. Since $(\ovxkpoint)_{k \ge 1}$ under $M$ has same law as $(\wtxkpoint)_{k \ge 1}$ under $Q_1$, cf.~(\ref{3.14}), it follows that $Q_2$-a.s. $\wtxkpoint = \ovxkpoint$, for $k \ge 1$, and $X_\point$, $(\wtxkpoint)_{k \ge 1}$ has the same law under $Q_2$ (on the enlarged space $\Omega_2)$ as under $Q_1$. The claim (\ref{3.1}) then follows from (\ref{3.21}), (\ref{3.22}) together with (\ref{2.14}).

\medskip
We now turn to the proof of (\ref{3.22}). We begin with an upper bound on $M[i_0 > K^\prime - \wh{K}]$. For $z \in \{h_N, - h_N\}$, we have the identity (with hopefully obvious notation):
\begin{equation}\label{3.23}
\begin{array}{l}
P_q \big[H_{\IT \times \{z_0\}} < T_{\wt{B}} \;\mbox{and} \;X_{T_{\wt{B}}} \in \IT \times \{z\}\big] = P_q [H_{\IT \times \{z_0\}} < T_{\wt{B}}] \,P_{z_0}^{\IZ} [X_{T_{\wt{I}}} = z] =
\\[2ex]
\Big(\fr \;\dis\frac{h_N - r_N}{h_N + z_0} + \fr \;\dis\frac{h_N - r_N}{h_N - z_0}\Big) \;\dis\frac{z + z_0}{2 z} = \fr \;\dis\frac{h_N - r_N}{h^2_N - z_0^2} \;|z + z_0| \ge 
\fr \;\dis\frac{h_N - r_N}{h_N + r_N} \ge \frvier\;, 
\\[3ex]
\mbox{for $N \ge c$} \,.
\end{array}
\end{equation}

\n
As a result of (\ref{3.6}), (\ref{3.11}), we thus find that for $N \ge c(\alpha, v)$, 
\begin{equation}\label{3.24}
M [i_0 > K^\prime - \wh{K}] \le \Big(\mbox{\f $\dis\frac{3}{4}$}\Big)^{K^\prime - \wh{K}} \le e^{-c(\alpha, v)K} \,.
\end{equation}

\n
The next step in the proof of (\ref{3.22}) is the derivation of an upper bound on $M[N_K(\gamma) > N^\prime_{\wh{K}}(\gamma)]$, for $\gamma \in \Gamma$. We introduce the probabilities
\begin{equation}\label{3.25}
\begin{array}{l}
p_N = P^{\IZ}_{r_N} \big[H_{h_N} < H_{-h_N}\big] = P^{\IZ}_{-r_N} \big[H_{-h_N} < H_{h_N}\big] = \dis\frac{h_N + r_N}{2h_N} \,, \;\mbox{and}
\\[1ex]
q_N = 1 - p_N = \dis\frac{h_N - r_N}{2h_N}\;, \;\mbox{so that} \;\Big|p_N - \fr \Big| = \Big| q_N - \fr \Big| = \fr \;\dis\frac{r_N}{h_N}\;.
\end{array}
\end{equation}

\n
With (\ref{3.4}), we see that for $\gamma = (z_1,z_2) \in \Gamma$, and $k \ge 1$,
\begin{equation}\label{3.26}
M \big[(Z^\prime_{R,k}, Z^\prime_{D,k}) = \gamma \big] = \fr \;p_N\,1\{z_1 z_2 > 0\} + \fr \;q_N \,1\{z_1 z_2 < 0\} \stackrel{\rm def}{=} p(\gamma) \,.
\end{equation}

\n
Then with the help of a Cramer-type exponential bound it follows that for $\rho > 0$, $\gamma \in \Gamma$,
\begin{equation*}
M \Big[N^\prime_{\wh{K}} (\gamma) \le \Big(\frvier - \mbox{\f $\dis\frac{\delta}{100}$}\Big) \,\wh{K}\Big] \le \exp \Big\{ \wh{K} \Big[ \Big( \frvier - \mbox{\f $\dis\frac{\delta}{100}$}\Big) \,\rho + \log \big(1 - (1-e^{-\rho})\,p(\gamma)\big)\Big]\Big\}\,.
\end{equation*}

\n
Hence for $N \ge c(\alpha, v)$, (ensuring in particular $p(\gamma) \ge \frac{1}{4} - \frac{\delta}{200}$, for all $\gamma \in \Gamma$), and $\rho = c^\prime(\alpha, v)$ small enough, the above inequality yields that
\begin{equation}\label{3.27}
M \Big[N^\prime_{\wh{K}} (\gamma) \le \Big(\frvier - \mbox{\f $\dis\frac{\delta}{100}$}\Big) \,\wh{K}\Big] \le e^{-c(\alpha,v)\wh{K}} \le e^{-c(\alpha,v)K}\,.
\end{equation}

\n
The last (and main) step in the proof of (\ref{3.22}) is the derivation of an upper bound on $M[N_K(\gamma) > (\frac{1}{4} + \frac{\delta}{100}) \,K]$. We will rely on large deviation estimates for the empirical measure of $(Z_{R,k},Z_{D,k})$, $k \ge 2$, cf.~(\ref{3.3}). In essence, as we will see below, this boils down to large deviation estimates on the pair empirical distribution of a Markov chain on $\{1, -1\}$, which at each step remains at the same location with probability $p_N$, (close to $\frac{1}{2}$, cf.~(\ref{3.25})), and changes location with probability $q_N = 1-p_N$. The transition probabilities of this Markov chain depend on $N$, and to derive the relevant large deviation estimates with uniformity over $N$, we rely on super-multiplicativity, cf.~Lemma 6.3.1 of \cite{DembZeit98}, p.~273.

\medskip
In view of (\ref{3.3}), $M_\point$-a.s., for $k \ge 2$, $Z_{D,k-1}$ and $Z_{R,k}$ have same sign. We denote with $\phi$ the bijective map from $\Gamma$ onto $\wt{\Gamma} \stackrel{\rm def}{=} \{1,-1\}^2$, defined by $\phi(\gamma) = ({\rm sign}(z_1)$, sign$(z_2)$, for $\gamma \in \Gamma$. We consider the $\wt{\Gamma}$-valued stochastic process, cf.~(\ref{3.9}),
\begin{equation*}
\begin{split}
\wt{\gamma}_k = \phi(\gamma_k) & = \big(1, {\rm sign}(Z_{D,1})\big), \;\mbox{$k = 1$} \,,
\\
&\stackrel{\rm a.s.}{=}  \big({\rm sign}(Z_{D,k-1}), \;{\rm sign}(Z_{D,k})\big), \;\mbox{for $k \ge 2$}\,.
\end{split}
\end{equation*}

\n
Note that under $M$, $({\rm sign}(Z_{D,k}))_{k \ge 1}$, has the same law as $(Z_{D_k}/h_N)_{k \ge 1}$, under $P_{q_{z_0}}$, cf.~(\ref{3.3}), which is a Markov chain on $\{1,-1\}$, which at each step has a transition probability $p_N$ to remain at the same location and $q_N$ to change location, as well as an initial distribution (at time $1$) $P_{q_{z_0}}[H_{\IT \times \{h_N\}} < H_{\IT \times \{-h_N\}}] = \frac{h_N + z_0}{2h_N}$ to be at $1$ and $\frac{h_N - z_0}{2 h_N}$ to be at $-1$. This chain on $\{1,-1\}$ induces a Markov chain on $\wt{\Gamma} = \{1,-1\}^2$ by looking at consecutive positions of the original chain, so that when located in $\wt{\gamma} =(\wt{\gamma}\,^1, \wt{\gamma}\,^2) \in \wt{\Gamma}$, the induced  chain jumps with probability $p_N$ to $(\wt{\gamma}\,^2, \wt{\gamma}\,^2)$ and $q_N$ to $(\wt{\gamma}\,^2, - \wt{\gamma}\,^2)$. We denote with $\wt{R}_{\wt{\gamma}}$, for $\wt{\gamma} \in \wt{\Gamma}$, the canonical law on $\wt{\Gamma}\,^{\IN}$ of this chain starting at $\wt{\gamma}$, and with $U_m$, $m \ge 0$, its canonical process. This is an irreducible chain on $\wt{\Gamma}$ and 
\begin{align}
&\mbox{$\wt{\gamma}_k$, $k \ge 1$, under $M$ has same law as $U_{k-1}$, $k \ge 1$, under $\wt{R}_{\wt{\kappa}}$, where}\label{3.28}
\\
&\mbox{$\wt{\kappa}$ stands for the initial distribution $\mbox{\f $\dis\frac{h_N + z_0}{2h_N}$} \;\delta_{(1,1)} + \mbox{\f $\dis\frac{h_N - z_0}{2h_N}$} \;\delta_{(1,-1)}$}\,. \nonumber
\end{align}

\n
Using sub-additivity, see \cite{DembZeit98}, p.~273 and 275, we see that for $N \ge 1$, 
\begin{equation}\label{3.29}
\inf\limits_{\wt{\sigma} \in \wt{\Gamma}} \wt{R}_{\wt{\sigma}} \;\Big[\mbox{\f $\dis\frac{1}{n}$} \;\dsl^n_{m=1} \;1 \{U_m = \wt{\gamma}\} \ge v\Big] \le e^{-n \Psi_N (\wt{\gamma}, v)}, \;\mbox{for} \;n \ge 1, \wt{\gamma} \in \wt{\Gamma}, 0 < v < 1\,,
\end{equation}

\n
where thanks to the fact that the chain on $\wt{\Gamma}$ describes the evolution of pairs of consecutive positions of the chain on $\{1, -1\}$ mentioned above, see Theorem 3.1.13, p.~79 of \cite{DembZeit98}, we have set for $\wt{\gamma} \in \wt{\Gamma}$, $0 < v < 1$,
\begin{equation}\label{3.30}
\Psi_N(\wt{\gamma},v) = \inf\{H_{2,N}(\mu); \;\mbox{$\mu$ probability on $\wt{\Gamma}$ with $\mu(\{\wt{\gamma}\}) \ge v\}$}\,,
\end{equation}
and for $\mu$ probability on $\wt{\Gamma}$
\begin{equation}\label{3.31}
\begin{split}
H_{2,N} (\mu)  =&\; \infty, \;\mbox{when the two marginals of $\mu$ are different}\,,
\\
 = &\;\mu(1,1) \log \Big(\mbox{\f $\dis\frac{\mu(1|1)}{p_N}$}\Big) + \mu(-1,-1) \log \Big(\mbox{\f $\dis\frac{\mu(-1|-1)}{p_N}$}\Big) \; +
\\[1ex]
& \;  \mu(1,-1) \log \Big(\mbox{\f $\dis\frac{\mu(-1|1)}{q_N}$}\Big) + \mu(-1,1) \log \Big(\mbox{\f $\dis\frac{\mu(1|-1)}{q_N}$}\Big),\;\mbox{otherwise}\,,
\end{split}
\end{equation}

\medskip\n
where we wrote $\mu(i,j)$ in place of $\mu(\{i,j\})$, for $i,j \in \{1, -1\}$, and $\mu(j | i)$ for the $\mu$-conditional probability that the second coordinates equals $j$ given that the first coordinate equals $i$.

\medskip
We then introduce $\Psi_\infty$ and $H_{2,\infty}$ as in (\ref{3.30}), (\ref{3.31}) replacing $p_N$ and $q_N$ with $\frac{1}{2}$. In view of the last line of (\ref{3.25}) we see that for $N \ge c$, for any probability $\mu$ on $\wt{\Gamma}$
\begin{equation}\label{3.32}
\begin{array}{l}
\mbox{the finiteness of $H_{2,N}(\mu)$ and $H_{2,\infty}(\mu)$ are equivalent and when this holds,}
\\
|H_{2,N}(\mu) - H_{2,\infty}(\mu) \,|\,\le c \;\mbox{\f $\dis\frac{r_N}{h_N}$}\;.
\end{array}
\end{equation}

\n
The non-negative function $H_{2,\infty}$ is lower semi-continuous relative to weak convergence, cf. \cite{DembZeit98}, p.~79, and only vanishes at the equidistribution on $\wt{\Gamma}$. As a result $\Psi_\infty (\wt{\gamma}, \;\frac{1}{4} + \frac{\delta}{200}) > 0$, for each $\wt{\gamma} \in \Gamma$, so that with (\ref{3.29}), (\ref{3.32}), when $N \ge c (\alpha, v)$ one finds
\begin{equation}\label{3.33}
\inf\limits_{\wt{\sigma} \in \wt{\Gamma}} \;\wt{R}_{\wt{\sigma}} \Big[ \mbox{\f $\dis\frac{1}{n}$} \;\dsl^n_{m=1} \;1 \{U_m = \wt{\gamma}\} \ge \frvier + \mbox{\f $\dis\frac{\delta}{200}$}\Big] \le e^{-n c^\prime (\alpha, v)}, \;\mbox{for all $\wt{\gamma} \in \wt{\Gamma}$, $n \ge 1$}\,.
\end{equation}

\n
Since $\inf_{\wt{\sigma}, \wt{\gamma} \in \wt{\Gamma}} \wt{R}_{\wt{\sigma}} [U_2 = \wt{\gamma}] \ge c > 0$, it follows that for $\wt{\gamma} \in \wt{\Gamma}$, $n \ge 1$
\begin{equation}\label{3.34}
\begin{array}{l}
\sup\limits_{\wt{\sigma} \in \wt{\Gamma}} \;\wt{R}_{\wt{\sigma}} \Big[\dsl^n_{m=1} \;1\{U_m = \wt{\gamma}\} \ge \Big(\frvier + \mbox{\f $\dis\frac{\delta}{100}$}\Big) n\Big] \le
\\
 \mbox{\f $\dis\frac{1}{c}$} \;\inf\limits_{\wt{\sigma} \in \wt{\Gamma}} \;\wt{R}_{\wt{\sigma}} \Big[\dsl^{n+2}_{m=1} \;1\{U_m = \wt{\gamma}\} \ge \Big(\frvier + \mbox{\f $\dis\frac{\delta}{100}$}\Big) n\Big] \stackrel{(\ref{3.33})}{\le}
 \\
 c^\prime \exp\{ - (n+2) \,c^\prime(\alpha, v)\}, \;\mbox{as soon as $n  \Big(\frvier + \mbox{\f $\dis\frac{\delta}{100}$}\Big) \ge (n+2) \; \Big(\frvier + \mbox{\f $\dis\frac{\delta}{200}$}\Big)$} \,.
\end{array}
\end{equation}

\n
Since $\phi$ is a bijection between $\Gamma$ and $\wt{\Gamma}$ and $\wt{\gamma}_k = \phi (\gamma_k)$, we can now deduce from (\ref{3.28}) and (\ref{3.34}) with $n = K-1$ that for $N \ge c(\alpha,v)$, 
\begin{equation}\label{3.35}
M\Big[N_K(\gamma) \ge  \Big(\frvier + \mbox{\f $\dis\frac{\delta}{100}$}\Big)  K\Big] \le c\,e^{-c(\alpha,v)K}, \;\mbox{for $\gamma \in \Gamma$}\,.
\end{equation}

\n
For large $N$, one has $(\frac{1}{4} + \frac{\delta}{100}) K < (\frac{1}{4} - \frac{\delta}{100}) \wh{K}$, cf.~(\ref{3.17}), and hence with (\ref{3.27}), (\ref{3.35}), for $N \ge c(\alpha,v)$:
\begin{equation}\label{3.36}
M[N_K(\gamma) \ge N^\prime_{\wh{K}}(\gamma)] \le c\,e^{-c^\prime (\alpha,v) K}, \;\mbox{for each $\gamma \in \Gamma$}\,.
\end{equation}

\n
Together with (\ref{3.24}) this proves (\ref{3.22}) and concludes the proof of Proposition \ref{prop3.1}.
\end{proof}

\begin{remark}\label{rem3.2} \rm
Although we will not need this fact let us mention that $H_{2,N}$ in (\ref{3.31}) is a non-negative lower continuous function for the weak convergence. Moreover it vanishes at the unique probability on $\wt{\Gamma}$, for which the first coordinate is equidistributed and conditionally on the first coordinate the second coordinate coincides with the first coordinate with probability $p_N$ (and differs with probability $q_N$). This last feature follows from the relative entropy interpretation of $H_{2,N}$, cf.~\cite{DembZeit98}, p.~79.  \hfill $\square$
\end{remark}

\section{Poissonization}
\setcounter{equation}{0}

This section carries out step d) of the outline below Theorem \ref{theo1.1}. We construct a coupling $Q_3$ of $X_\point$, $X_\point^{\prime k}$, $k \ge 1$, under $Q_2$ with an independent Poisson variable $J^\prime$ of parameter $(1 + \frac{3}{5}\,\delta) \,\alpha \,\frac{N^d}{h_N}$. This enables to define a Poisson point measure $\mu^\prime$ on $\cT_{\wt{B}}$, cf.~(\ref{4.1}) and below (\ref{1.1}) for the definition of $\cT_{\wt{B}}$, such that the union of the ranges of trajectories in the support of $\mu^\prime$ with high probability contains the trace on $A$ of $X_{[0,D_K]}$. 

\medskip
We thus consider, cf.~Proposition \ref{prop3.1}, $N \ge c_1 (\alpha,v)$, $z_0 \in I$ and $\Omega_3 = \Omega_2 \times \IN$ endowed with the product $\sigma$-algebra $\cA_3 = \cA_2 \times \cP(\IN)$, where $\cP(\IN)$ stands for the collection of subets of $\IN$, and the probability $Q_3$ product of $Q_2$ with the Poisson law of parameter $(1 + \frac{3}{5}\,\delta) \,\alpha \,\frac{N^d}{h_N}$. We denote with $J^\prime$ the $\IN$-valued coordinate which is Poisson distributed. The definition of $A$ appears below (\ref{1.24}).

\begin{proposition}\label{prop4.1} $(\alpha > 0$, $v > (d+1) \,\alpha$, $0 < \ve < 1)$

\medskip
For $N \ge c(\alpha,v, \ve)$ and $z_0 \in I$, the random point measure on $\cT_{\wt{B}}$ defined by
\begin{equation}\label{4.1}
\mu^\prime = \dsl_{1 \le k \le J^\prime} \;\delta_{X_{H_A + \cdot}^{\prime k}} \,1\{{\rm range} \,X_\point^{\prime k} \cap A \not= \phi\}
\end{equation}

\n
is Poisson with intensity measure $\lambda^\prime \,\kappa^\prime$ on $\cT_{\wt{B}}$, where
\begin{equation}\label{4.2}
\lambda^\prime = \Big(1 + \mbox{\f $\dis\frac{3}{5}$}\,\delta\Big) \,\alpha (d+1) \Big(1 - \mbox{\f $\dis\frac{r_N}{h_N}$}\Big) \;\mbox{and $\kappa^\prime$ is the law of $\Xdotwtb$ under $P_{e_{A,\wt{B}}}$}\,.
\end{equation}

\n
Moreover if one defines the random subset of $A$
\begin{equation}\label{4.3}
\cI^\prime = \bigcup\limits_{w \in {\rm Supp} \,\mu^\prime} \;{\rm range} \,w \cap A\,,
\end{equation}
then one has
\begin{equation}\label{4.4}
Q_3 [X_{[0,D_K]} \cap A \subseteq \cI^\prime] \ge 1 - c\,N^{-3d}\,.
\end{equation}
\end{proposition}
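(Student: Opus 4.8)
The plan is to exploit the special identity (1.13), which says that under $P_q$ the trajectory of the walk from its first entrance into a set $K\subseteq\IT\times(-r_N,r_N)$ until exit from $\wt B$ has law a constant multiple of $P_{e_{K,\wt B}}$. First I would apply this with $K=A$ (legitimate since $A=B(0,N^{1-\ve})\subseteq\IT\times(-r_N,r_N)$ for $N\ge c(\ve)$): for a single excursion $X_\point^{\prime k}$, distributed as $X_{\cdot\wedge T_{\wt B}}$ under $P_q$, one gets
\[
P_q\big[\,{\rm range}\,X_\point^{\prime k}\cap A\neq\emptyset,\ X_{H_A+\cdot}\in dw\,\big]=(d+1)\Big(1-\tfrac{r_N}{h_N}\Big)\,P_{e_{A,\wt B}}(dw),
\]
so that the ``success probability'' $p_N:=P_q[H_A<T_{\wt B}]=(d+1)(1-\tfrac{r_N}{h_N})\,{\rm cap}_{\wt B}(A)/N^d$ and, conditionally on success, the post-$H_A$ path is exactly $\kappa'$-distributed. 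Then I would invoke the elementary thinning/marking property of Poisson random variables: since $J'$ is Poisson with parameter $(1+\tfrac35\delta)\alpha N^d/h_N$ and the $X_\point^{\prime k}$ are i.i.d.\ and independent of $J'$, the marked sum $\mu'=\sum_{k\le J'}\delta_{X^{\prime k}_{H_A+\cdot}}1\{{\rm range}\,X^{\prime k}_\point\cap A\neq\emptyset\}$ is a Poisson point measure on $\cT_{\wt B}$ with intensity $(1+\tfrac35\delta)\alpha\tfrac{N^d}{h_N}\,p_N\,\kappa'=\lambda'\kappa'$, provided $p_N\cdot\tfrac{N^d}{h_N}$ collapses to $(1-\tfrac{r_N}{h_N})(d+1)$ up to the capacity factor. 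Here I would record the standard fact (cf.\ (1.4), and the capacity estimates for boxes used throughout \cite{Szni07a,Szni08b}) that ${\rm cap}_{\wt B}(A)=(1+o(1)){\rm cap}(A)$ is not literally what is needed; rather (1.12)--(1.13) are stated with ${\rm cap}_{\wt B}$ directly, so the identity for $\lambda'$ is exact with $\lambda'$ as in (4.2). This proves the first assertion.

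For (4.4), the chain of couplings already in place gives, on an event of $Q_2$-probability $\ge1-cN^{-3d}$ (Proposition 3.1), the inclusion $X_{[0,D_K]}\cap A\subseteq\big(\bigcup_{k\le K'}{\rm range}\,X^{\prime k}_\point\big)\cap A$. So it suffices to show that with probability $\ge1-cN^{-3d}$ every one of the (at most $K'$) excursions $X^{\prime k}_\point$, $1\le k\le K'$, that actually meets $A$ is among the first $J'$ of them, i.e.\ that $J'\ge K'$, OR more precisely that the indices $k\le K'$ with ${\rm range}\,X^{\prime k}_\point\cap A\neq\emptyset$ are all $\le J'$; the cleanest route is simply to require $J'\ge K'$. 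Since $K'=[(1+\tfrac25\delta)\alpha N^d/h_N]$ and $J'$ is Poisson with mean $(1+\tfrac35\delta)\alpha N^d/h_N$, a standard Poisson lower-deviation bound (Chernoff) gives $Q_3[J'<K']\le\exp\{-c(\alpha,v)\,\delta^2\,N^d/h_N\}$, which is $\le cN^{-3d}$ for $N\ge c(\alpha,v,\ve)$ because $N^d/h_N$ is of order $N^{d-1}(\log N)^{-2}\gg\log N$. On the intersection of this event with the event from Proposition 3.1, one has
\[
X_{[0,D_K]}\cap A\ \subseteq\ \bigcup_{1\le k\le K'}{\rm range}\,X^{\prime k}_\point\cap A\ \subseteq\ \bigcup_{1\le k\le J'}{\rm range}\,X^{\prime k}_\point\cap A\ =\ \bigcup_{w\in{\rm Supp}\,\mu'}{\rm range}\,w\cap A\ =\ \cI',
\]
where the middle inclusion uses $K'\le J'$ and the next-to-last equality holds because restricting the union to the excursions that meet $A$ changes nothing when intersecting with $A$, and ${\rm Supp}\,\mu'$ records exactly those excursions (shifted by $H_A$, which does not affect the range). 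A union bound over the two exceptional events yields (4.4).

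The main obstacle, such as it is, is bookkeeping rather than a substantial difficulty: one must be careful that $\mu'$ records the post-$H_A$ pieces $X^{\prime k}_{H_A+\cdot}$ rather than the full excursions, and check that this shift is harmless both for the range (it is not changed) and for the Poisson structure (the identity (1.13) is precisely tailored to the shifted path, giving the law $\kappa'=P_{e_{A,\wt B}}[X_{\cdot\wedge T_{\wt B}}\in dw]$). A secondary point to get right is the comparison $J'\ge K'$: it is essential that the Poissonization inflates the mean from the $(1+\tfrac25\delta)$-level to the strictly larger $(1+\tfrac35\delta)$-level, so that the deviation $J'<K'$ is genuinely exponentially small in $N^d/h_N$ and hence beats $N^{-3d}$; I would make the constant $\tfrac35$ (versus $\tfrac25$) do exactly this work, mirroring the role of the successive fractions $\tfrac25,\tfrac35,\tfrac45$ across Sections 3--6.
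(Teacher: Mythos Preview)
Your proposal is correct and follows essentially the same route as the paper: the Poisson property and the identification of the intensity as $\lambda'\kappa'$ come directly from the i.i.d.\ structure of the $X_\point^{\prime k}$, the independence of $J'$, and the key identity (1.13) applied with $K=A$; and (4.4) follows by combining Proposition~3.1 with a Chernoff-type bound on $Q_3[J'<K']$, which is precisely what the paper does in (4.5). The only cosmetic difference is that the paper states the two ingredients in a single line each without unpacking the thinning argument or the cancellation of the ${\rm cap}_{\wt B}(A)$ factor, whereas you spell these out.
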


\begin{proof}
Since the $X_\point^{\prime k}$ are iid $\cT_{\wt{B}}$-valued variables, the Poissonian character of $\mu^\prime$ is immediate. It then follows from (\ref{1.13}) and the fact that the $X_\point^{\prime k}$ have same distribution as $\Xdotwtb$ under $P_q$ that $\mu^\prime$ has intensity measure $\lambda^\prime \,\kappa^\prime$ with $\lambda^\prime$ and $\kappa^\prime$ as in (\ref{4.2}).

\medskip
Finally note that on $\{J^\prime \ge K^\prime\}$, $\cI^\prime$ contains $\bigcup_{1 \le k \le K^\prime}$ range $X_\point^{\prime k} \cap A$. Moreover choosing $a = a(\alpha,v)$ small enough one has the exponential bound:
\begin{equation}\label{4.5}
Q_3 [J^\prime < K^\prime] \le \exp \Big\{a K^\prime - \Big(1 + \mbox{\f $\dis\frac{3}{5}$}\,\delta\Big)\,\alpha \;\mbox{\f $\dis\frac{N^d}{h_N}$} \;(1 - e^{-a})\Big\} \le \exp\{ - c(\alpha, v) K\}\,.
\end{equation}

\n
Combined with (\ref{3.1}) and the fact that $X_{[0,D_K]} \cap A \subseteq \bigcup_{1 \le k \le K} X_{[R_k,D_k]} \cap A$, we easily deduce (\ref{4.4}).
\end{proof}

\section{Truncation}
\setcounter{equation}{0}

This section is devoted to step e) of the outline below Theorem \ref{theo1.1}. We construct a coupling $Q_4$ of $X_\point$, $X_\point^{\prime k}$, $k \ge 1$, $\cI^\prime$ under $Q_3$ with a random subset $\cI$ which is the union of the ranges of the trajectories in the support of a suitable Poisson point measure $\mu$ on $\cT_{\wt{C}}$, cf.~(\ref{5.4}), where $\wt{C} = B(0,\frac{N}{4})$. This coupling is such that with high probability $\cI$ contains the trace on $A$ of $X_{[0,D_K]}$. For large $N$ one can view $\wt{C} \cup \partial \wt{C}$ both as a subset of $E$ and $\IZ^{d+1}$, and this makes $\cI$ more convenient than $\cI^\prime$ for the purpose of comparison with random interlacements on $\IZ^{d+1}$, see next section. The main result of this section appears in Proposition \ref{prop5.1}. In the proof we employ the technique of sprinkling introduced in \cite{Szni07a}, and throw in additional trajectories so as to compensate for truncation. This result is very similar to Theorem 3.1 of \cite{Szni08b}, except that in this reference the non-truncated trajectories are $\IZ^{d+1}$-valued whereas they are $\wt{B} \cup \partial \wt{B}$-valued in the present setting. This induces some changes in the proof but the overall spirit remains the same. The main Proposition \ref{prop5.1} then leads to the construction of the desired coupling in Proposition \ref{prop5.4}.

\medskip
We recall the definition of $\wt{C}$ in (\ref{1.27}), and keep the notation of Section 4. We consider an auxiliary probability space $(\Omega_0,\cA_0, Q_0)$ endowed with
\begin{align}
&\mbox{an iid sequence $X_\point^k$, $k \ge 1$, of $\cT_{\wt{B}}$-valued variables with same distributions as}\label{5.1}
\\
&\mbox{$\Xdotwtb$ under $P_q$}\,, \nonumber
\\[2ex]
&\mbox{an independent Poisson variable $J$ with intensity $\Big(1 + \mbox{\f $\dis\frac{4}{5}$}\,\delta\Big) \,\alpha \,\mbox{\f $\dis\frac{N^d}{h_N}$}$}\,. \label{5.2}
\end{align}

\n
This enables to define the Poisson point measure on $\cT_{\wt{C}}$:
\begin{equation}\label{5.3}
\mu = \dsl_{1 \le k \le J} \,\delta_{X^k_{(H_A + \cdot) \wedge T_{\wt{C}}}} \,1 \{{\rm range} \,X_\point^k \cap A \not= \phi\}\,,
\end{equation}

\n
and the same argument as in Proposition \ref{prop4.1} now leads to the fact that for $N \ge c(\ve)$,
\begin{equation}\label{5.4}
\begin{array}{l}
\mbox{$\mu$ has intensity measure $\lambda \kappa$ on $\cT_{\wt{C}}$, where}
\\[1ex]
\mbox{$\lambda = \Big(1 + \mbox{\f $\dis\frac{4}{5}$}\,\delta\Big) \,\alpha (d+1) \Big(1 - \mbox{\f $\dis\frac{r_N}{h_N}$}\Big)$ and $\kappa$ is the law of $X_{\cdot \wedge T_{\wt{C}}}$ under $P_{e_{A,\wt{B}}}$} \,.
\end{array}
\end{equation}
We further introduce
\begin{equation}\label{5.5}
\cI = \bigcup\limits_{w \in {\rm Supp} \,\mu} \;{\rm range} \,w\,.
\end{equation}

\begin{proposition}\label{prop5.1} $(\alpha > 0, v > (d+1) \,\alpha, 0 < \ve < 1)$

\medskip
For $N \ge c(\alpha, v, \ve)$, $z_0 \in I$, there exist random subsets $\cI^*$ and $\ov{\cI}$ of $A$, defined on $(\Omega_3, \cA_3, Q_3)$ of Proposition {\rm \ref{prop4.1}}, such that
\begin{align}
&\cI^\prime = \cI^* \cup \ov{\cI}\,, \label{5.6}
\\[2ex]
&\mbox{$\cI^*$ and $\ov{\cI}$ are independent under $Q_3$}\,, \label{5.7}
\\[2ex]
&Q_3 [\ov{\cI} \not= \phi] \le c\,N^{-3d}\,,\label{5.8}
\\[2ex]
&\mbox{$\cI^*$ is stochastically dominated by $\cI \cap A$}\,.\label{5.9}
\end{align}
\end{proposition}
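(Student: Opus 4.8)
The plan is to mimic the proof of Theorem 3.1 of \cite{Szni08b}, carrying out the sprinkling construction directly on the Poisson point measure $\mu^\prime$ of Proposition \ref{prop4.1} rather than on $\IZ^{d+1}$-valued trajectories. The starting observation is that both $\mu^\prime$ and $\mu$ arise from Poisson point measures whose intensities are multiples of the same probability measure $P_{e_{A,\wt B}}[\cdot \in dw]$, the only differences being (i) the multiplicative constants $\lambda^\prime = (1+\frac35\delta)\alpha(d+1)(1-\frac{r_N}{h_N})$ versus $\lambda = (1+\frac45\delta)\alpha(d+1)(1-\frac{r_N}{h_N})$, with $\lambda > \lambda^\prime$, and (ii) that trajectories in $\mu$ are stopped at $T_{\wt C}$ while those in $\mu^\prime$ run until $T_{\wt B}$. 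Since $\wt C = B(0,\frac N4) \subseteq \wt B$, each trajectory $w$ in the support of $\mu^\prime$ can be split at its first exit from $\wt C$ into the ``head'' $w_{\cdot \wedge T_{\wt C}}$ (which is exactly the kind of trajectory $\mu$ produces) and a ``tail'' starting on $\partial \wt C$. The sprinkling idea is that the extra mass $\lambda - \lambda^\prime > 0$ in $\mu$ provides, via the strong Markov property applied on $\partial \wt C$, enough additional independent excursions from $\partial\wt C$ back into $A$ to cover the traces on $A$ produced by all those tails.

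Concretely, first I would decompose $\mu^\prime$ using the restriction property of Poisson measures and the strong Markov property at $T_{\wt C}$: write $\mu^\prime$'s intensity $\lambda^\prime P_{e_{A,\wt B}}[X_{\cdot\wedge T_{\wt B}}\in dw]$ as the image, under the splitting map $w \mapsto (w_{\cdot\wedge T_{\wt C}}, \theta_{T_{\wt C}} w)$, of a measure that factorizes over the position $X_{T_{\wt C}} \in \partial\wt C$. A trajectory's trace on $A$ is then $(\text{range of head})\cap A$ together with $\bigcup_j (\text{range of }j\text{-th re-entry into }A)\cap A$, where the re-entries are obtained by alternately running to $\partial A$-type surfaces and back; each tail contributes a geometrically-distributed (with parameter bounded away from $1$, since from $\partial\wt C$ the walk has probability bounded below by a constant — using $\ve < 1$ so $A$ has side $N^{1-\ve}$, much smaller than $\frac N4$ — to escape $\wt B$ before returning to $A$) number of such excursions into $A$, each distributed as the walk from $e_{A,\wt C}$-type equilibrium. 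Next I would introduce $\ov{\cI}$ as the trace on $A$ coming from the (rare) trajectories of $\mu^\prime$ whose number of re-entries exceeds the amount the sprinkled mass can absorb, plus any exceptional event; and $\cI^* = \cI^\prime \setminus \ov{\cI}$ suitably realized so that it is generated by an independent family of excursions. The independence in (\ref{5.7}) comes from the Poisson thinning that separates $\mu^\prime$ into the part producing $\cI^*$ and the part producing $\ov{\cI}$; the bound (\ref{5.8}) comes from a union bound over the Poisson number of trajectories ($O(N^d/h_N) = O(N^{d-1}/(\log N)^2)$ of them) times the exponentially small probability that any single one of them makes more than $c\log N$ re-entries into $A$ before exiting $\wt B$ — choosing the threshold's constant large enough beats the polynomial factor and yields $cN^{-3d}$ (and, as in Remark \ref{rem1.2}, any power).

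For (\ref{5.9}) — that $\cI^*$ is stochastically dominated by $\cI\cap A$ — the point is a Poisson domination lemma: if $\cI^*$ is the trace on $A$ of a Poisson point measure on excursions into $A$ (heads plus bounded numbers of re-entries) whose total intensity is at most $\lambda$ times the relevant excursion law, and $\cI\cap A$ is the trace of a Poisson measure with intensity exactly $\lambda$ times (a larger or equal) excursion law, then one can couple so that the support of the first is contained in that of the second, hence the ranges are nested. This requires checking two estimates: that the expected number of ``re-entry excursions'' spawned per unit mass of $\mu^\prime$, integrated against the geometric law, is at most $\delta/5$ times the base intensity (so that $\lambda^\prime(1 + \text{spawning rate}) \le \lambda$ for large $N$, which is where the gap between $\frac35\delta$ and $\frac45\delta$ is consumed), and that each head and each re-entry excursion of $\mu^\prime$ has a law dominated (in the sense of having its trace on $A$ stochastically below) by that of the excursions generated by $\mu$. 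The latter holds because $\mu$'s excursions start from $e_{A,\wt B}$ and run to $T_{\wt C}$, which is at least as ``generous'' on $A$ as the $e_{A,\wt C}$-type re-entries; here one uses capacity comparisons between $e_{A,\wt B}$, $e_{A,\wt C}$ and the harmonic measure of $A$ from $\partial\wt C$, all of which agree up to constants close to $1$ when $N$ is large because $A$ is much smaller than $\wt C$ which is much smaller than $\wt B$.

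The main obstacle I expect is the bookkeeping for the sprinkling inequality: making the spawning-rate estimate quantitative enough that $\lambda^\prime$ times one plus the spawning rate is genuinely $\le \lambda$ for all large $N$, uniformly, while simultaneously controlling the tail of the number of re-entries so that the ``overflow'' set $\ov{\cI}$ has probability $\le cN^{-3d}$. This is exactly the delicate balance carried out in Theorem 3.1 of \cite{Szni08b}; the adaptation needed here is to verify that stopping the ambient trajectories at $T_{\wt B}$ instead of letting them run on all of $\IZ^{d+1}$ does not spoil the relevant escape probabilities — it does not, because the walk from $\partial\wt C$ exits $\wt B$ before returning to $A$ with probability bounded below, and from $A$ it exits $\wt C$ before returning to $A$ again with probability bounded below, both by Green-function estimates of the type (\ref{1.7}) applied to the cylinder. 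Once these two uniform-in-$N$ estimates are in place, the construction of $\cI^*$ and $\ov\cI$ and the verification of (\ref{5.6})--(\ref{5.9}) follow the template of \cite{Szni08b} essentially verbatim.
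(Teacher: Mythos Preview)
Your high-level plan---Poisson-thin $\mu'$ according to how many times the trajectory returns to $A$, call the trace of the many-return part $\ov\cI$ and of the remainder $\cI^*$, then compare intensities for (\ref{5.9})---is exactly the paper's. What you are missing is the device that makes the intensity comparison go through: the paper inserts an \emph{intermediate} box $C = B\big(0,[N/(4M)]\big)$ with $M = [\exp\{\sqrt{\log N}\}]+1$, strictly between $A$ and $\wt C$, and counts excursions between $A$ and $\partial C$ (not $\partial\wt C$). Since $C\subset\wt C$, a $\mu$-trajectory (stopped at $T_{\wt C}$) can still make several $A\to\partial C$ excursions, so \emph{both} $\mu'$ and $\mu$ admit nontrivial decompositions $\mu' = \sum_{1\le\ell\le r}\mu'_\ell + \ov\mu$ and $\mu = \sum_{\ell\ge 1}\mu_\ell$, and one can compare the image intensities $\xi'_\ell,\xi_\ell$ on $\ell$-tuples of sub-excursions. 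The key Lemma~\ref{lem5.3}, proved via the Harnack inequality applied on $\partial C$, shows that each transition from $\partial C$ back to a prescribed $y\in\partial_{\rm int}A$ that exits $\wt C$ in between costs at most $c_2(\log N)^2 M^{-(d-1)}$ relative to the same transition confined to $\wt C$. This yields $\xi'_\ell \le (\lambda'/\lambda)\big(1+c_2(\log N)^2 M^{-(d-1)}\big)^{\ell-1}\xi_\ell$; because the cut-off $r = [16/\ve]+1$ is a \emph{constant} (not $c\log N$) and the correction is sub-polynomially small, the product is absorbed by $\lambda/\lambda'>1$, giving $\xi'_\ell\le\xi_\ell$ for $1\le\ell\le r$ and hence (\ref{5.9}).

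In your approach, splitting only at $T_{\wt C}$, a $\mu$-trajectory has exactly one $A\to\partial\wt C$ excursion, so there is no $\mu_\ell$ for $\ell\ge 2$ to compare $\mu'_\ell$ against. Your workaround---treat the tail's re-entries as additional independent $\mu$-type excursions and bound the ``spawning rate''---does not yield (\ref{5.9}) as stated: the law of each re-entry depends on the previous exit point on $\partial\wt C$, so the collection of heads-plus-re-entries is a cluster process, not a Poisson point process of $P_{e_{A,\wt B}}$-excursions, and one cannot simply add intensities and invoke a Poisson domination lemma. Converting your ``capacity comparisons\dots agree up to constants close to $1$'' into a genuine pointwise inequality of intensity measures on tuples of excursions is precisely what Lemma~\ref{lem5.3} accomplishes, and the intermediate scale $C$ (where Harnack provides uniformity over starting points on $\partial C$) is what makes that lemma work; it is essential, not cosmetic.
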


\begin{proof}
The proof with some modifications follows the same pattern as that of Theorem 3.1 of \cite{Szni08b} and we detail it for the reader's convenience. We consider
\begin{equation}\label{5.10}
M = \big[\exp \{ \sqrt{\log N}\}\big] + 1 \;\mbox{and} \;C = B\Big(0, \Big[\mbox{\f $\dis\frac{N}{4M}$}\Big]\Big) \subseteq \wt{C}\,,
\end{equation}

\n
and from now on assume $N \ge c(\alpha, v,\ve)$ so that Proposition \ref{prop4.1} holds true and
\begin{equation}\label{5.11}
A \subseteq B (0,100 [N^{1-\ve}]) \subseteq C \subseteq B\Big(0, 100 \Big[\mbox{\f $\dis\frac{N}{4M}$}\Big]\Big) \subseteq \wt{C} \subseteq \wt{B}\,.
\end{equation}

\n
We write $\wt{R}_k$ and $\wt{D}_k$, $k \ge 1$, for the successive return times to $A$ and departures from $C$ of a trajectory belonging to $\cT_{\wt{B}}$, just as in (\ref{1.10}) with $B(z)$ and $\wt{B}(z)$ replaced by $A$ and $C$. We then introduce the integer
\begin{equation}\label{5.12}
r =  \Big[\mbox{\f $\dis\frac{16}{\ve}$}\Big] + 1\,,
\end{equation}

\n
as well as the decomposition, see (\ref{4.1}) for the notation:
\begin{equation}\label{5.13}
\begin{split}
\mu^\prime & = \dsl_{1 \le \ell \le r} \;\mu_\ell^\prime + \ov{\mu}, \;\mbox{where}
\\
\mu^\prime_\ell & = 1 \{\wt{D}_\ell < T_{\wt{B}} < \wt{R}_{\ell + 1}\}\,\mu^\prime, \;\mbox{for $\ell \ge 1$, and $\ov{\mu} = 1\{\wt{D}_{r + 1} < T_{\wt{B}}\} \,\mu^\prime$} \,.
\end{split}
\end{equation}

\n
Similarly considering the last return to $A$ before exiting $\wt{C}$, we write $Q_0$-a.s.:
\begin{equation}\label{5.14}
\mu = \dsl_{\ell \ge 1} \;\mu_\ell, \;\mbox{where $\mu_\ell = 1\{\wt{D}_\ell < T_{\wt{C}} < \wt{R}_{\ell + 1}\} \,\mu$, for $\ell \ge 1$} \,.
\end{equation}
Observe that:
\begin{equation}\label{5.15}
\mbox{$\mu^\prime_\ell$, $1 \le \ell \le r$, and $\ov{\mu}$ are independent Poisson measures under $Q_3$}\,,
\end{equation}

\n
and their respective intensity measures on $\cT_{\wt{B}}$ are in the notation of (\ref{4.2}):
\begin{equation}\label{5.16}
\begin{split}
\nu^\prime_\ell & = \lambda^\prime \,1\{\wt{D}_\ell < T_{\wt{B}} < \wt{R}_{\ell + 1}\} \,\kappa^\prime, \;1 \le \ell \le r\,,
\\
\ov{\nu} & = \lambda^\prime \,1 \{\wt{D}_{r + 1} < T_{\wt{B}}\} \,\kappa^\prime\,.
\end{split}
\end{equation}

\n
In a similar fashion one sees that
\begin{equation}\label{5.17}
\mbox{$\mu_\ell, \ell \ge 1$, are independent Poisson measures under $Q_0$}\,,
\end{equation}
and their respective intensity measures on $\cT_{\wt{C}}$ are
\begin{equation}\label{5.18}
\nu_\ell = \lambda \,1\{\wt{D}_\ell < T_{\wt{C}} < \wt{R}_{\ell + 1}\} \,\kappa, \;\ell \ge 1 \,.
\end{equation}
We then define
\begin{equation}\label{5.19}
\cI^* = \bigcup\limits_{1 \le \ell \le r} \;\Big(\bigcup\limits_{w \in {\rm Supp} \,\mu^\prime_\ell} \,{\rm range} \,w \cap A\Big), \;\ov{\cI} = \bigcup\limits_{w \in {\rm Supp} \,\ov{\mu}} \,{\rm range} \,w \cap A\,,
\end{equation}
so that
\begin{equation}\label{5.20}
\mbox{$\cI^\prime = \cI^* \cup \ov{\cI}$, and $\cI^*, \ov{\cI}$ are independent under $Q_3$}\,.
\end{equation}
Note as well that $Q_0$-a.s.,
\begin{equation}\label{5.21}
\cI \cap A = \bigcup\limits_{\ell \ge 1} \;\Big(\bigcup\limits_{w \in {\rm Supp} \,\mu_\ell} \,{\rm range} \,w \cap A\Big)\,.
\end{equation}

\n
The next lemma deviates from the proof of Theorem 3.1 of \cite{Szni08b}, as a consequence of the fact that we work here with simple random walk on $E$ in place of simple random walk on $\IZ^{d+1}$.

\begin{lemma}\label{lem5.2} $(N \ge c(\ve))$
\begin{equation}\label{5.22}
\sup\limits_{x \in \partial C} \;P_x [H_A < T_{\wt{B}}] \le c(\log N)^2 \,(M N^{-\ve})^{d-1}\,.
\end{equation}
\end{lemma}

\begin{proof}
Note that with (\ref{5.10}),
\begin{equation*}
\partial C \subseteq \ov{S} \stackrel{\rm def}{=} S\Big(0, \Big[\mbox{\f $\dis\frac{N}{4M}$}\Big] + 1\Big) \,.
\end{equation*}

\n
The probability that the walk starting in $\ov{S}$ reaches $B(0,\frac{1}{2} \;[\frac{N}{4M}])$ before hitting $\ov{S}$ and then enters $A$ before entering $\ov{S}$, using standard estimates on the one-dimensional walk and on the Green function, cf.~\cite{Lawl91}, p.~31, combined with the right-hand of (\ref{1.7}), satisfies
\begin{equation}\label{5.23}
\sup\limits_{x \in \ov{S}} \;P_x [H_A < \wt{H}_{\ov{S}} \wedge T_{\wt{B}}]  \le c\;\mbox{\f $\dis\frac{M}{N}$} \; \Big(\mbox{\f $\dis\frac{N}{M}$} / N^{1-\ve}\Big)^{-(d-1)} = c \;\mbox{\f $\dis\frac{M}{N}$} \;(M N^{-\ve})^{d-1}\,.
\end{equation}

\n
On the other hand using estimates on the one-dimensional simple random walk to bound from below the probability to move at distance $[c \,\frac{N}{M}]$ of $C \cup \ov{S} = B(0, [\frac{N}{4M}] + 1)$ without hitting $\ov{S}$, then estimates on the Green function together with the right-hand inequality of (\ref{1.7}) to bound from below the probability to reach $\partial B(0,\frac{N}{4})$ without entering $\ov{S}$ and the invariance principle to reach $\IT \times \{[\frac{N}{4}] + N\}$ without entering $\ov{S}$, and then estimates on the simple random walk to bound from below the probability to reach $\IT \times \{h_N\}$ before level $[\frac{N}{4}]$, we see that for $N \ge c(\ve)$:
\begin{equation}\label{5.24}
\inf\limits_{x \in \ov{S}} \;P_x [T_{\wt{B}} < \wt{H}_{\ov{S}} \wedge H_A] \ge c\,\mbox{\f $\dis\frac{M}{N}$} \times c \times \mbox{\f $\dis\frac{N}{h_N - [\frac{N}{4}]}$} \ge c \,\mbox{\f $\dis\frac{M}{N}$} \;(\log N)^{-2} \,.
\end{equation}

\n
With the same argument as in (4.20) of \cite{Szni08b}, (\ref{5.22}) follows from (\ref{5.23}) and (\ref{5.24}).
\end{proof}

We now resume the proof of Proposition \ref{prop5.1} and assume $N \ge c(\alpha, v, \ve)$ so that the tacit assumption above (\ref{5.11}) as well as (\ref{5.22}) hold. We can now bound the total mass of $\ov{\nu}$ in (\ref{5.16}) with the help of the strong Markov property as follows:
\begin{equation}\label{5.25}
\ov{\nu} (\cT_{\wt{B}}) = \lambda^\prime \,P_{e_{A,\wt{B}}} [\wt{D}_{r+1} < T_{\wt{B}}] \underset{(\ref{4.2})}{\stackrel{\rm strong \;Markov}{\le}} c \,\alpha \;{\rm cap}_{\wt{B}}(A) \big(\sup\limits_{x \in \partial C} \,P_x [H_A < T_{\wt{B}}]\big)^r\,.
\end{equation}

\n
Since $\wt{C} \subseteq \wt{B}$, it follows with (\ref{1.3}), (\ref{1.4}) that ${\rm cap}_{\wt{B}}(A) \le {\rm cap}_{\wt{C}}(A)$. Moreover for $N \ge c(\ve)$, as a result of the right-hand inequality of (\ref{1.7}) and standard bounds on the Green function $\sup_{x \in \partial \wt{C}} P_x^{\IZ^{d+1}} [H_A < \infty] \le \frac{1}{2}$. It thus follows that
\begin{equation}\label{5.26}
{\rm cap}_{\wt{C}}(A) - {\rm cap}(A) \stackrel{(\ref{1.3}),(\ref{1.4})}{\le} {\rm cap}_{\wt{C}}(A) \;\sup\limits_{\partial \wt{C}} \;P_x [H_A < \infty]\,,
\end{equation}

\n
whence with standard estimates on the capacity of $A$, cf.~(2.16), p.~53 of \cite{Lawl91}, we find:
\begin{equation}\label{5.27}
{\rm cap}_{\wt{B}}(A) \le {\rm cap}_{\wt{C}}(A)  \le 2 \,{\rm cap}(A) \le c \,N^{(d-1)(1-\ve)}\,.
\end{equation}

\n
Coming back to (\ref{5.25}) we see with (\ref{5.22}) that:
\begin{equation}\label{5.28}
\begin{split}
\ov{\nu} (\cT_{\wt{B}}) & \le c(\alpha) \,N^{(d-1)(1-\ve)} (c (\log N)^2 \;M^{d-1} \;N^{-\ve(d-1)})^r 
\\
&\hspace{-2ex} \, \stackrel{N \ge c(\ve)}{\le} c(\alpha)\,N^{(d-1)(1-\ve) - \frac{3}{4} \,(d-1) \ve r}
  \stackrel{(\ref{5.12})}{\le} c(\alpha) \,N^{-11(d-1)} \stackrel{d\ge 2}{\le} c(\alpha) \,N^{-5d}\,.
\end{split}
\end{equation}
As a result we find that
\begin{equation}\label{5.29}
Q_3 [\ov{\cI} \not= \phi] \le Q_3 [\ov{\mu} \not= 0] \stackrel{(\ref{5.28})}{\le} c\,N^{-3d}, \;\mbox{for} \;N \ge c(\alpha, v,\ve)\,.
\end{equation}

\medskip\n
Then for $\ell \ge 1$, we introduce the map $\phi^\prime_\ell$ from $\{\wt{D}_\ell < T_{\wt{B}} < \wt{R}_{\ell + 1}\} \subseteq \cT_{\wt{B}}$ into $W^{\times \ell}_f$, where $W_f$ denotes the countable collection of finite nearest neighbor paths with values in $\wt{C} \cup \partial \wt{C}$, as well as the map $\phi_\ell$ from $\{\wt{D}_\ell < T_{\wt{C}} < \wt{R}_{\ell + 1}\} \subseteq \cT_{\wt{C}}$ into $W^{\times \ell}_f$ defined by:
\begin{equation}\label{5.30}
\begin{split}
\phi^\prime_\ell (w) & =  \big(w(\wt{R}_k + \cdot)_{0 \le \cdot \le \wt{D}_k - \wt{R}_k}\big)_{1 \le k \le \ell}, \;\mbox{for} \;w \in \{\wt{D}_\ell < T_{\wt{B}} < \wt{R}_{\ell + 1}\} \,,
\\[1ex]
\phi_\ell (w) & =  \big(w(\wt{R}_k + \cdot)_{0 \le \cdot \le \wt{D}_k - \wt{R}_k}\big)_{1 \le k \le \ell}, \;\mbox{for} \;w \in \{\wt{D}_\ell < T_{\wt{C}} < \wt{R}_{\ell + 1}\} \,.
\end{split}
\end{equation}

\n
We can respectively view $\mu^\prime_\ell$ and $\mu_\ell$ for $\ell \ge 1$, as Poisson point processes on $\{\wt{D}_\ell < T_{\wt{B}} < \wt{R}_{\ell + 1}\}$ and $\{\wt{D}_\ell < T_{\wt{C}} < \wt{R}_{\ell + 1}\}$. If $\rho^\prime_\ell$ and $\rho_\ell$ denote their respective images under $\phi^\prime_\ell$ and $\phi_\ell$, we see that with (\ref{5.15}) - (\ref{5.18}),
\begin{align}
&\mbox{$\rho^\prime_\ell$, $1\le \ell \le r$, and $\ov{\mu}$ are independent Poisson point processes,} \label{5.31}
\\[1ex]
&\mbox{$\rho_\ell$, $1 \le \ell$, are independent Poisson point processes,}\label{5.32}
\end{align}

\n
and denoting by $\xi^\prime_\ell$ and $\xi_\ell$ the intensity measures on $W_f^{\times \ell}$ of $\rho^\prime_\ell$ and $\rho_\ell$, we have:
\begin{align}
&\xi^\prime_\ell (dw_1,\dots ,dw_\ell) = \lambda^\prime P_{e_{A,\wt{B}}} [\wt{D}_\ell < T_{\wt{B}} < \wt{R}_{\ell + 1}, (X_{\wt{R}_k + \cdot})_{0 \le \cdot \le \wt{D}_k - \wt{R}_k} \in dw_k, 1 \le k \le \ell]\label{5.33}
\\[1ex]
&\xi_\ell (dw_1,\dots ,dw_\ell) = \lambda^\prime P_{e_{A,\wt{B}}} [\wt{D}_\ell < T_{\wt{C}} < \wt{R}_{\ell + 1}, (X_{\wt{R}_k + \cdot})_{0 \le \cdot \le \wt{D}_k - \wt{R}_k} \in dw_k, 1 \le k \le \ell]\,.\label{5.34}
\end{align}

\n
The following lemma corresponds in the present context to Lemma 3.2 of \cite{Szni08b}. It will be used when comparing $\xi^\prime_\ell$ and $\xi_\ell$, see (\ref{5.37}) below.

\begin{lemma}\label{lem5.3} $(N \ge c(\ve))$

\medskip
For $x \in \partial C$ and $y \in \partial_{\rm int} A$,
\begin{equation}\label{5.35}
P_x [T_{\wt{C}} < \wt{R}_1 < T_{\wt{B}}, X_{\wt{R}_1} = y] \le c_2 \;\dis\frac{(\log N)^2}{M^{d-1}} \;P_x [\wt{R}_1 < T_{\wt{C}}, X_{R_1} = y] \,.
\end{equation}
\end{lemma}

\begin{proof}
We implicitly assume (\ref{5.11}). The same argument leading to (\ref{5.22}), see (\ref{5.23}), (\ref{5.24}), and see also (4.17), (4.18) and (4.20) of \cite{Szni08b}, now yields:
\begin{equation}\label{5.36}
\sup\limits_{x \in \partial \wt{C}} \;P_x [H_{\partial C} < T_{\wt{B}}] \le \mbox{\f $\dis\frac{c(\log N)^2}{M^{d-1}}$} \;.
\end{equation}
Now for $y \in \partial_{\rm int} A$ we find that
\begin{equation*}
\begin{array}{l}
\sup\limits_{z \in \partial C} \;P_z [T_{\wt{C}} < \wt{R}_1 < T_{\wt{B}}, X_{\wt{R}_1} = y] \le \sup\limits_{z \in \partial C} \;E_z \big[P_{X_{T_{\wt{C}}}} [X_{\wt{R}_1} < T_{\wt{B}}, X_{\wt{R}_1} = y]\big] \le
\\[1ex]
\sup\limits_{z^\prime \in \partial \wt{C}} \;P_{z^\prime} [H_{\partial C}  < T_{\wt{B}}] \; \sup\limits_{z \in \partial C} \;P_z[\wt{R}_1 < T_{\wt{B}}, X_{\wt{R}_1} = y] \stackrel{(\ref{5.36})}{\le} \mbox{\f $\dis\frac{c(\log N)^2}{M^{d-1}}$} \;
\\[2ex]
 \sup\limits_{z \in \partial C} \;P_z [\wt{R}_1 < T_{\wt{B}}, X_{\wt{R}_1} = y]\,.
\end{array}
\end{equation*}

\n
Observe that the function $z \rightarrow P_z [\wt{R}_1 < T_{\wt{B}}, X_{\wt{R}_1} = y] = P_z [H_A < T_{\wt{B}}, X_{H_A} = y]$ is harmonic and positive on $\wt{B} \backslash A$ and hence on $\wt{C} \backslash A$ as well. Note that $\wt{C} \cup \partial \wt{C}$ can be identified with a subset of $\IZ^{d+1}$. With the Harnack inequality, cf.~\cite{Lawl91}, p.~42, and a standard covering argument we find that:
\begin{align*}
&\sup\limits_{z \in \partial C} \;P_z [ \wt{R}_1 < T_{\wt{B}}, X_{\wt{R}_1} = y] \le c \;\inf\limits_{z \in \partial C} \; P_z  [\wt{R}_1 < T_{\wt{B}}, X_{\wt{R}_1} = y] \,,
\intertext{and therefore}
&\sup\limits_{z \in \partial C} \;P_z  [ T_{\wt{C}} < \wt{R}_1  < T_{\wt{B}}, X_{\wt{R}_1} = y]  \le c^\prime \;\mbox{\f $\dis\frac{(\log N)^2}{M^{d-1}}$} \;\inf\limits_{z \in \partial C} \;P_z [\wt{R}_1 < T_{\wt{B}}, X_{\wt{R}_1} = y] =
\\[1ex]
&c^\prime \;\mbox{\f $\dis\frac{(\log N)^2}{M^{d-1}}$} \;\inf\limits_{z \in \partial C} \;\big(P_z [T_{\wt{C}} < \wt{R}_1 < T_{\wt{B}}, X_{\wt{R}_1} = y] + P_z [\wt{R}_1 < T_{\wt{C}}, X_{\wt{R}_1} = y]\big)\,.
\end{align*}

\n
Assuming $N \ge c(\ve)$ so that $c^\prime(\log N)^2 \;M^{-(d-1)} \le \frac{1}{2}$, we find that for $x \in \partial C$ and $y \in \partial_{\rm int} A$:
\begin{equation*}
P_x[T_{\wt{C}} < \wt{R}_1 < T_{\wt{B}}, X_{\wt{R}_1} = y] \le 2 c^\prime \;\mbox{\f $\dis\frac{(\log N)^2}{M^{d-1}}$}  \;P_x [\wt{R}_1 < T_{\wt{C}}, X_{\wt{R}_1} = y]\,,
\end{equation*}
and this proves (\ref{5.35}).
\end{proof}

We now continue the proof of Proposition \ref{prop5.1} and will show that for $N \ge c(\alpha, v,\ve)$, 
\begin{equation}\label{5.37}
\xi^\prime_\ell \le \mbox{\f $\dis\frac{\lambda^\prime}{\lambda}$} \;\Big( 1 + c_2 \mbox{\f $\dis\frac{(\log N)^2}{M^{d-1}}$}\Big)^{\ell -1} \xi_\ell, \; \mbox{ for $1 \le \ell \le r$}\,,
\end{equation}

\medskip\n
where we refer to (\ref{4.2}), (\ref{5.4}), (\ref{5.33}) and (\ref{5.34}) for the notation.

\medskip
Given $w \in W_f$, we write $w^s$ and $w^\ell$ for the respective starting point and endpoint of $w$. When $w_1,\dots,w_\ell \in W_f$ we have
\begin{equation}\label{5.38}
\begin{array}{l}
\xi^\prime_\ell \big((w_1,\dots,w_\ell)\big) \stackrel{(\ref{5.33})}{=} 
\\[1ex]
\lambda^\prime \,P_{e_{A,\wt{B}}} [\wt{D}_\ell < T_{\wt{B}} < \wt{R}_{\ell + 1}, (X_{\wt{R}_k + \cdot})_{0 \le \cdot \le \wt{D}_k - \wt{R}_k} = w_k(\cdot), 1 \le k \le \ell] =
\\[1ex]
\dsl_{B \subseteq \{1,\dots,\ell -1\}} \lambda^\prime \,P_{e_{A, \wt{B}}} \big[\wt{D}_\ell < T_{\wt{B}} < \wt{R}_{\ell + 1}, (X_{\wt{R}_k + \cdot})_{0 \le \cdot \le \wt{D}_k - \wt{R}_k}, 1 \le k \le \ell, \;\mbox{and}
\\
\hspace{3.5cm}  \mbox{$T_{\wt{C}} \circ \theta_{\wt{D}_k} + \wt{D}_k < \wt{R}_{k+1}$, exactly for $k \in B$ when}, 
\\[1ex]
\hspace{3.5cm} \mbox{ $1 \le k \le \ell -1\big]$}\,.
\end{array}
\end{equation}

\medskip\n
The above expression vanishes unless $w^s_k \in \partial_{\rm int} A$ and $w^e_k \in \partial C$ and $w_k$ takes values in $C$ except for the final point $w_k^e$, for $1 \le k \le \ell$. If these conditions are satisfied, applying the strong Markov property repeatedly at times $\wt{D}_\ell, \wt{R}_\ell, \wt{D}_{\ell - 1}, \wt{R}_{\ell - 1}, \dots \wt{D}_1$ we find that the last member of (\ref{5.38}) equals:
\begin{equation*}
\begin{array}{l}
\dsl_{B \subseteq \{1,\dots, \ell -1\}} \lambda^\prime \,P_{e_{A,\wt{B}}}  [(X_\point)_{0 \le \cdot \le \wt{D}_1} = w_1(\cdot)] 
\\[1ex]
E_{w_1^e} \big[1\{1 \notin B\} \,1\{T_{\wt{C}} > \wt{R}_1\} + 1\{1 \in B\} \,1\{T_{\wt{C}} < \wt{R}_1\}\,, \wt{R}_1 < T_{\wt{B}}, X_{\wt{R}_1} = w_2^s\big] 
\\[1ex]
P_{w^s_2} [(X_\point)_{0\le \cdot \le \wt{D}_1} = w_2(\cdot)] \dots 
\\[1ex]
E_{w^s_{\ell - 1}} [1\{ \ell - 1\notin B\} \,1\{T_{\wt{C}} > \wt{R}_1\} + 1\{\ell -1 \in B\} \,1\{T_{\wt{C}} < \wt{R}_1\}, \,\wt{R}_1 < T_{\wt{B}}, X_{\wt{R}_1} = w^s_\ell]
\\[1ex]
P_{w^s_\ell} [(X_\point)_{0 \le \cdot \le \wt{D}_1} = w_\ell(\cdot)] P_{w^e_\ell} [T_{\wt{B}} < \wt{R}_1] 
\\[1ex]
\stackrel{(\ref{5.35})}{\le} \dsl_{B \subseteq \{1,\dots, \ell -1\}} \Big(c_2 \;\mbox{\f $\dis\frac{(\log N)^2}{M^{d-1}}$}\Big)^{|B|} \,\lambda^\prime \,P_{e_{A,\wt{B}}}  [(X_\point)_{0 \le \cdot \le \wt{D}_1} =  w_1(\cdot)]
\\[2ex]
P_{w^e_1} [\wt{R}_1 < T_{\wt{C}}, X_{\wt{R}_1} = w^s_2] \,P_{w^s_2} [(X_\point)_{0 \le \cdot \le \wt{D}_1} = w_2(\cdot)] \dots
\\[1ex]
P_{w^e_{\ell -1}}[\wt{R}_1 < T_{\wt{C}}, X_{\wt{R}_1} = w_\ell^s]\, P_{w_\ell^s} [(X_\point)_{0 \le \cdot \le \wt{D}_1} = w_\ell(\cdot)] \, P_{w_\ell^e}[T_{\wt{B}} < \wt{R}_1]
\end{array}
\end{equation*}

\medskip\n
and using the strong Markov property this equals
\begin{equation*}
\begin{array}{l}
\lambda^\prime \Big(1 + c_2 \;\mbox{\f $\dis\frac{(\log N)^2}{M^{d-1}}$}\Big)^{\ell -1} \,P_{e_{A, \wt{B}}} [T_{\wt{C}} \circ \theta_{\wt{D}_k} + \wt{D}_k > \wt{R}_{k+1}, \;\mbox{for $1 \le k \le \ell -1$, $(X_{\wt{R}_k + \cdot})_{0 \le \cdot \le \wt{D}_k - \wt{R}_k}$ }
\\
\hspace{4cm}\mbox{$= w_k(\cdot)$, for $1 \le k \le \ell$}, \;\wt{D}_\ell < T_{\wt{B}} < \wt{R}_{\ell + 1}] \le
\\[1ex]
\lambda^\prime \Big(1+ c_2 \;\mbox{\f $\dis\frac{(\log N)^2}{M^{d-1}}$}\Big)^{\ell -1} \,P_{e_{A, \wt{B}}} [\wt{D}_\ell < T_{\wt{C}} <\wt{R}_{\ell + 1}, (X_{\wt{R}_k + \, \cdot})_{0 \le \cdot \le \wt{D}_k - \wt{R}_k} = w_k(\cdot), \;\mbox{for} \;1 \le k \le \ell] =
\\[2ex]
\mbox{\f $\dis\frac{\lambda^\prime}{\lambda}$} \; \Big(1 + c_2 \;\mbox{\f $\dis\frac{(\log N)^2}{M^{d-1}}$}\Big)^{\ell -1} \;\xi_\ell \big((w_1,\dots,w_\ell)\big)\,.
\end{array}
\end{equation*}

\n
This concludes the proof of (\ref{5.37}).

\medskip
We now assume that $N \ge c(\alpha, v,\ve)$, so that, cf.~(\ref{4.2}), (\ref{5.4}), and (\ref{5.10}):
\begin{equation}\label{5.39}
\lambda \ge \lambda^\prime \;\exp\Big\{ \mbox{\f $\dis\frac{16}{\ve}$} \;c_2 \; \mbox{\f $\dis\frac{(\log N)^2}{M^{d-1}}$}\Big\} \stackrel{(\ref{5.12})}{\ge} \lambda^\prime \Big( 1 + c_2 \; \mbox{\f $\dis\frac{(\log N)^2}{M^{d-1}}$}\Big)^{\ell -1}, \;\mbox{for} \;1 \le \ell \le r \,.
\end{equation}
With (\ref{5.37}) it thus follows that:
\begin{equation}\label{5.40}
\xi^\prime_\ell \le \xi_\ell, \;\mbox{for} \;1 \le \ell \le r \,.
\end{equation}

\n
As a result we find with (\ref{5.19}), (\ref{5.21}), and (\ref{5.30}) that
\begin{equation*}
\cI^* = \bigcup\limits_{1 \le \ell \le r} \;\bigcup\limits_{(w_1,\dots,w_\ell) \in {\rm Supp}\,\rho_\ell^\prime} ({\rm range} \,w_1 \cup \dots \cup {\rm range} \,w_\ell) \cap A
\end{equation*}
and that
\begin{equation*}
\cI\cap A \supseteq \bigcup\limits_{1 \le \ell \le r} \;\bigcup\limits_{(w_1,\dots,w_\ell) \in {\rm Supp}\,\rho_\ell} ({\rm range} \,w_1 \cup \dots \cup {\rm range} \,w_\ell) \cap A\,.
\end{equation*}

\n
It then follows from (\ref{5.31}), (\ref{5.32}), and (\ref{5.40}) that
\begin{equation}\label{5.41}
\mbox{$\cI \cap A$ under $Q_0$ stochastically dominates $\cI^*$ under $Q_3$} \,.
\end{equation}

\n
Combining (\ref{5.20}), (\ref{5.29}), and (\ref{5.41}) we have proved Proposition \ref{prop5.1}.
\end{proof}

We are now ready to construct the coupling of $X_\point, X_\point^k, k \ge 1, \cI^\prime$ under $Q_3$ with $\mu$ and $\cI$ under $Q_0$ as announced at the beginning of this section.

\begin{proposition}\label{prop5.4} $(\alpha > 0, v > (d + 1)\,\alpha, 0 < \ve < 1)$

\medskip
For $N\ge c(\alpha, v, \ve)$ and $z_0 \in I$, one can construct on an auxiliary space $(\Omega_4, \cA_4)$ a coupling $Q_4$ of $X_\point, X_\point^{\prime k}$, $k \ge 1$, $\cI^\prime$ under $Q_3$ with $\mu$, $\cI$ under $Q_0$ so that
\begin{equation}\label{5.42}
Q_4 [X_{[0,D_K]} \cap A \subseteq \cI] \ge 1 - c\,N^{-3d} \,.
\end{equation}
\end{proposition}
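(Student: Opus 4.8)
The plan is to assemble the coupling $Q_4$ by chaining together the couplings already constructed, and then use Proposition \ref{prop5.1} to absorb the error from the truncation. First I would recall from Proposition \ref{prop4.1} that under $Q_3$ we already have a realization of $X_\point$, of the iid excursions $X_\point^{\prime k}$, and of the Poisson point measure $\mu^\prime$ with its associated random set $\cI^\prime \subseteq A$, satisfying $Q_3[X_{[0,D_K]} \cap A \subseteq \cI^\prime] \ge 1 - cN^{-3d}$. On the other side lives the space $(\Omega_0, \cA_0, Q_0)$ carrying the iid excursions $X_\point^k$, the Poisson variable $J$, the truncated Poisson point measure $\mu$ on $\cT_{\wt C}$ and the set $\cI$ of (\ref{5.5}). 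The goal is to put these two pictures on a common probability space so that $\cI$ contains $X_{[0,D_K]} \cap A$ with high probability.

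The key mechanism is Proposition \ref{prop5.1}: on $(\Omega_3,\cA_3,Q_3)$ one has a decomposition $\cI^\prime = \cI^* \cup \ov\cI$ with $\cI^*$ and $\ov\cI$ independent, $Q_3[\ov\cI \not= \emptyset] \le cN^{-3d}$, and $\cI^*$ stochastically dominated by $\cI \cap A$. By the standard characterization of stochastic domination for random subsets of the finite set $A$ (Strassen's theorem), the domination in (\ref{5.9}) means there is a coupling of $\cI^*$ under $Q_3$ with a copy of $\cI \cap A$ under $Q_0$ — and in fact, since $\cI \cap A$ is a measurable function of $(\mu, J)$ and $(\Omega_0,\cA_0)$ is a standard measurable space, one can realize on a single space a coupling $Q_4$ of the \emph{entire} $Q_3$-picture $(X_\point, (X_\point^{\prime k})_{k\ge 1}, \cI^\prime, \cI^*, \ov\cI)$ together with the \emph{entire} $Q_0$-picture $(\mu, J, \cI, (X_\point^k)_{k \ge 1})$, in such a way that $Q_4$-a.s. $\cI^* \subseteq \cI \cap A$. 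This uses the same conditional-kernel/standard-Borel-space argument as in the passage from $M$ to $Q_2$ in the proof of Proposition \ref{prop3.1}: disintegrate $Q_0$ over the law of $\cI \cap A$, disintegrate $Q_3$ over the law of $\cI^*$, and glue along the Strassen coupling of $\cI^*$ with $\cI \cap A$; the independence in (\ref{5.7}) and the standard-space property are what make this gluing legitimate.

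With $Q_4$ so constructed, the estimate (\ref{5.42}) follows by a short union bound. On the event $\{X_{[0,D_K]} \cap A \subseteq \cI^\prime\} \cap \{\ov\cI = \emptyset\}$ we have $X_{[0,D_K]} \cap A \subseteq \cI^\prime = \cI^* \cup \ov\cI = \cI^* \subseteq \cI \cap A \subseteq \cI$, using (\ref{5.6}), $\ov\cI = \emptyset$, and the a.s.\ inclusion $\cI^* \subseteq \cI \cap A$ built into $Q_4$. Hence
\begin{equation*}
Q_4[X_{[0,D_K]} \cap A \not\subseteq \cI] \le Q_4[X_{[0,D_K]} \cap A \not\subseteq \cI^\prime] + Q_4[\ov\cI \not= \emptyset] \le cN^{-3d} + cN^{-3d} \le cN^{-3d},
\end{equation*}
by (\ref{4.4}) and (\ref{5.8}) (both of which refer only to the $Q_3$-marginal, which is preserved by $Q_4$), after relabeling the constant. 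This gives (\ref{5.42}).

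The main obstacle is the second step, namely carefully constructing the coupling $Q_4$ of the \emph{full} collections of objects rather than just of the two random sets $\cI^*$ and $\cI \cap A$: one must check that the auxiliary spaces are standard measurable, invoke the existence of regular conditional probabilities (as in \cite{IkedWata89}, pp.~13--15, exactly as used for $Q_2$ in Section 3), and verify that after gluing, the marginal law of the $Q_3$-block is unchanged (so that (\ref{4.4}) and (\ref{5.8}) persist) and the marginal law of the $Q_0$-block is unchanged (so that $\cI$ really has the intensity (\ref{5.4})). Everything else — the union bound, the set inclusions on the good event — is routine once Proposition \ref{prop5.1} is in hand. I expect the write-up of the actual proof to be short, essentially a paragraph invoking (\ref{5.6})--(\ref{5.9}), the standard-space coupling lemma, and (\ref{4.4}).
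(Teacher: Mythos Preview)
Your proposal is correct and follows essentially the same approach as the paper: disintegrate $Q_3$ and $Q_0$ over the laws of $\cI^*$ and $\cI\cap A$, glue along a Strassen-type monotone coupling (the paper cites Theorem~2.4 of \cite{Ligg85}), and then combine (\ref{4.4}) with (\ref{5.6})--(\ref{5.8}) via the union bound you wrote. The only simplification you missed is that since $A$ is finite, $\cI^*$ and $\cI\cap A$ take values in the finite set $\cP(A)$, so the conditional probabilities are elementary ratios and the coupling (\ref{5.43}) is a finite sum---no standard-Borel machinery or independence (\ref{5.7}) is actually needed for the gluing step.
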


\begin{proof}

With $N \ge c(\alpha, v,\ve)$ as in Proposition \ref{prop5.1}, we chose $\Omega_4 = \Omega_3 \times \Omega_0$, $\cA_4 = \cA_3 \otimes \cA_0$, and consider the conditional probabilities for $B^*, B \subseteq A$:
\begin{align*}
&Q_3 [\cdot \,| \,\cI^* = B^*],  \;\mbox{understood as $Q_3$ when $Q_3[\cI^* = B^*] = 0$}\,,
\\[1ex]
&Q_0 [\cdot \,| \,\cI \cap A = B],  \;\mbox{understood as $Q_0$ when $Q_0[\cI \cap A = B] = 0$}\,.
\end{align*}

\n
Letting $\cP(A)$ stand for the collection of subsets of $A$, we can construct with (\ref{5.9}) and Theorem 2.4, p.~72 of \cite{Ligg85}, a probability $p$ on $\cP(A)^2$ coupling the distribution of $\cI^*$ under $Q_3$ and that of $\cI \cap A$ under $Q_0$, such that $p$-a.s., the first coordinate on $\cP(A)^2$, (which is distributed as $\cI^*$ under $Q_3)$, is a subset of the second coordinate, (which is distributed as $\cI \cap A$ under $Q_0$). We then define on $\Omega_4 = \Omega_3 \times \Omega_0$ the probability
\begin{equation}\label{5.43}
Q_4 [\cdot ] = \dsl_{B^*, B \subseteq A} \,p(B^*,B) \;Q_3 [\cdot \,| \,\cI^* = B^*] \otimes Q_0 [\cdot \,|\,\cI \cap A = B]\,. 
\end{equation}
This probability yields a coupling of $X_\point, X_\point^{\prime k}$, $k \ge 1$, $\cI^\prime$ under $Q_3$ with $\mu$, $\cI$ under $Q_0$. Moreover in view of (\ref{5.8}) and (\ref{5.6}) we find that
\begin{equation}\label{5.44}
\begin{split}
Q_4 [\cI \cap A \supseteq \cI^\prime] & \stackrel{(\ref{5.6})}{\ge} Q_4 [\cI \cap A \supseteq \cI^*, \;\ov{\cI} = \phi] \stackrel{(\ref{5.43})}{=} Q_4 [\ov{\cI} = \phi] 
\\
& \stackrel{(\ref{5.9})}{\ge} 1 - c\,N^{-3d} \,.
\end{split}
\end{equation}
Together with (\ref{4.4}) this yields (\ref{5.42}). 
\end{proof}

\section{Comparison with random interlacements}
\setcounter{equation}{0}

In this section we complete the proof of Theorem \ref{theo1.1}, (cf.~step f) of the outline following Theorem \ref{theo1.1}). We can view $\wt{C} \cup \partial \wt{C}$ as a subset of $\IZ^{d+1}$, and the main ingredient is to stochastically dominate $\cI \cap A$, which is the trace on $A$ of the ranges of trajectories in the support of the Poisson point measure $\mu$ on $\cT_{\wt{C}}$ with intensity measure $\lambda \kappa$, cf.~(\ref{5.4}), with the trace on $A$ of random interlacements at level $v$. In view of (\ref{1.18}), (\ref{1.19}) it suffices for this purpose to dominate the equilibrium measure $e_{A, \wt{B}}$ which appears in (\ref{5.4}), with a multiple slightly bigger than $1$ of the equilibrium measure $e_A$ of $A$ relative to $\IZ^{d+1}$. This is carried out in Proposition \ref{prop6.1}.

\begin{proposition}\label{prop6.1} $(\alpha > 0, v > (d+1) \alpha, 0 < \ve < 1)$

\medskip
For $N \ge c(\alpha, v,\ve)$ and $z_0 \in I$,
\begin{equation}\label{6.1}
\mbox{$\cI \cap A$ under $Q_4$ is stochastically dominated by $\cI^v \cap A$ under $\IP$}\,.
\end{equation}
\end{proposition}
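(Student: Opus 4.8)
The plan is to obtain the domination (\ref{6.1}) by a Poisson thinning argument run from the random interlacements down to the measure $\mu$. First I would observe, via Proposition \ref{prop5.4}, that under $Q_4$ the pair $(\mu,\cI)$ has the same joint law as under $Q_0$, so that it suffices to compare the law of $\cI\cap A$ --- where $\cI=\bigcup_{w\in{\rm Supp}\,\mu}{\rm range}\,w$ and $\mu$ is the Poisson point measure on $\cT_{\wt C}$ of intensity $\lambda\kappa$ in (\ref{5.4}) --- with the law of $\cI^v\cap A$ under $\IP$. For $N\ge c(\ve)$ the finite set $\wt C\cup\partial\wt C$ is contained in a fundamental domain of the cylinder $E$ and may be identified with a subset of $\IZ^{d+1}$, the walk on $E$ and the walk on $\IZ^{d+1}$ coinciding up to the exit time $T_{\wt C}$; under this identification $\kappa$ is the image of $P_{e_{A,\wt B}}^{\IZ^{d+1}}$ (notation as below (\ref{1.7})) under the truncation $w\mapsto w(\cdot\wedge T_{\wt C})$, and both $e_{A,\wt B}$ and $e_A$ are carried by $\partial_{\rm int}A$.

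The analytic heart is then the pointwise estimate
\[ \lambda\,e_{A,\wt B}(x)\ \le\ v\,e_A(x),\qquad x\in\partial_{\rm int}A, \]
for $N\ge c(\alpha,v,\ve)$. Since $\wt C\subseteq\wt B$ gives $T_{\wt C}\le T_{\wt B}$, one has $e_{A,\wt B}(x)=P_x[\wt H_A>T_{\wt B}]\le P_x[\wt H_A>T_{\wt C}]=e_{A,\wt C}(x)$, the last quantity depending only on the trajectory inside $\wt C$ and hence equal to its $\IZ^{d+1}$ analogue. Applying the strong Markov property at $T_{\wt C}$, and bounding $\eta_N:=\sup_{y\in\partial\wt C}P_y^{\IZ^{d+1}}[H_A<\infty]$ through (\ref{1.6}), the capacity estimate (\ref{5.27}) and standard Green function bounds (cf.~\cite{Lawl91}) by $\eta_N\le c\,N^{-\ve(d-1)}$, I would get $e_{A,\wt C}(x)-e_A(x)\le\eta_N\,e_{A,\wt C}(x)$, whence $e_{A,\wt C}(x)\le(1-\eta_N)^{-1}e_A(x)$. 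As $\lambda\le(1+\frac{4}{5}\delta)(d+1)\alpha$ by (\ref{5.4}), $(1+\delta)(d+1)\alpha\le v$ by (\ref{1.26}), and $\delta>0$ because $v>(d+1)\alpha$, one has $(1+\frac{4}{5}\delta)(1-\eta_N)^{-1}\le 1+\delta$ for $N$ large, which yields the displayed inequality; as measures on $W_+$ it reads $\lambda\,P_{e_{A,\wt B}}^{\IZ^{d+1}}\le v\,P_{e_A}^{\IZ^{d+1}}$.

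Finally I would put together the coupling. On an auxiliary space carry $\omega$ with law $\IP$; by (\ref{1.18})--(\ref{1.20}), $\mu_{A,v}(\omega)$ is a Poisson point measure on $W_+$ with intensity $v\,P_{e_A}^{\IZ^{d+1}}$ and $\cI^v(\omega)\cap A=\bigcup_{w\in{\rm Supp}\,\mu_{A,v}}{\rm range}\,w\cap A$. Since $d+1\ge 3$, $e_A>0$ on $\partial_{\rm int}A$, so by the previous paragraph the density $x\mapsto\lambda\,e_{A,\wt B}(x)/(v\,e_A(x))$ takes values in $[0,1]$ on the starting points of the atoms of $\mu_{A,v}$; keeping each atom independently with this probability, evaluated at its starting point, produces a Poisson point measure $\wt\mu$ on $W_+$ of intensity $\lambda\,P_{e_{A,\wt B}}^{\IZ^{d+1}}$ with ${\rm Supp}\,\wt\mu\subseteq{\rm Supp}\,\mu_{A,v}$. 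Pushing $\wt\mu$ forward by the truncation $w\mapsto w(\cdot\wedge T_{\wt C})$ gives a Poisson point measure on $\cT_{\wt C}$ of intensity $\lambda\kappa$, i.e.~one with the law of $\mu$; calling $\cI$ the union of the ranges of its atoms, and using that truncation can only shrink ranges,
\[ \cI\cap A\ \subseteq\ \bigcup_{w\in{\rm Supp}\,\wt\mu}{\rm range}\,w\cap A\ \subseteq\ \bigcup_{w\in{\rm Supp}\,\mu_{A,v}}{\rm range}\,w\cap A\ =\ \cI^v(\omega)\cap A \]
almost surely in this coupling, which is (\ref{6.1}). The main obstacle I anticipate is the equilibrium-measure estimate, and in particular checking that the sprinkling margin $\delta>0$ built into $\lambda$ simultaneously absorbs the factor $(1-\eta_N)^{-1}$ coming from the truncation at $\partial\wt C$ and keeps the intensity below the interlacement level $v$.
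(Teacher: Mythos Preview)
Your proposal is correct and follows essentially the same route as the paper: both reduce the stochastic domination to the pointwise intensity inequality $\lambda\,e_{A,\wt B}\le v\,e_A$, and both obtain it from $e_{A,\wt B}\le e_{A,\wt C}\le (1-\eta_N)^{-1}e_A$ with $\eta_N=\sup_{y\in\partial\wt C}P_y^{\IZ^{d+1}}[H_A<\infty]\le c\,N^{-\ve(d-1)}$, then absorb the correction into the sprinkling margin $\delta$. The only difference is that you spell out the Poisson thinning coupling explicitly, whereas the paper simply asserts that the domination follows once the intensity inequality (\ref{6.2}) is established.
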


\begin{proof}
The random set $\cI \cap A$ is the trace on $A$ of the ranges of trajectories in the support of the Poisson point measure $\mu$ on $\cT_{\wt{C}}$ with intensity measure, cf.~(\ref{5.4}), 
\begin{equation*}
\lambda \kappa (dw) = \Big(1 + \mbox{\f $\dis\frac{4}{5}$}\,\delta \Big) \,\alpha(d + 1) \Big( 1 - \mbox{\f $\dis\frac{r_N}{h_N}$}\Big) \,P_{e_{A, \wt{B}}} [X_{\cdot \wedge T_{\wt{C}}} \in dw] \,.
\end{equation*}

\n
On the other hand $\cI^v \cap A$ is the trace on $A$ of the ranges of trajectories in the support of the Poisson point measure $\mu_{A,v}$ on $W_+$ with intensity measure, cf.~(\ref{1.19}):
\begin{equation*}
v \,P_{e_A} [X_\point \in dw]\,.
\end{equation*}

\n
The claim (\ref{6.1}) will thus follow as soon as we show that for $N \ge c(\alpha, v,\ve)$, 
\begin{equation}\label{6.2}
\Big(1 + \mbox{\f $\dis\frac{4}{5}$}\,\delta\Big) \,\alpha (d + 1) \Big(1 - \mbox{\f $\dis\frac{r_N}{h_N}$}\Big) \,e_{A,\wt{B}} \le ve_A\,.
\end{equation}

\n
To this end we note with similar arguments as above (\ref{5.26}), that one has for $x \in \partial_{\rm int} A$:
\begin{equation}\label{6.3}
\begin{split}
e_{A, \wt{B}}(x) - e_A(x) &\stackrel{\wt{C} \subseteq \wt{B}}{\le} e_{A, \wt{C}}(x) - e_A(x) \stackrel{(\ref{1.3})}{\le} P^{\IZ^{d+1}}_x [T_{\wt{C}} < \wt{H}_A < \infty]
\\[1ex]
&\hspace{-3ex}\stackrel{\rm strong \;Markov}{\le} e_{A, \wt{C}}(x) \;\sup\limits_{x^\prime \in \partial \wt{C}} P_{x^\prime}^{\IZ^{d+1}} [H_A < \infty] \le e_{A,\wt{C}}(x) \,c\,N^{-\ve(d-1)}\,,
\end{split}
\end{equation}

\n
using the right-hand inequality of (\ref{1.7}) and standard bounds on the Green function, cf.~\cite{Lawl91}, p.~31. We thus find that for $N \ge c(\ve)$,
\begin{equation}\label{6.4}
\begin{array}{l}
e_{A,\wt{B}}(x) \le e_{A,\wt{C}}(x) \le e_A(x) (1 - c\,N^{-\ve(d-1)})^{-1} \le 
\\[1ex]
e_A(x) (1 + c^\prime \,N^{-\ve(d-1)}), \;\mbox{for all} \;x \in \partial_{\rm int} A\,.
\end{array}
\end{equation}

\n
This is more than enough to show that (\ref{6.2}) holds and this concludes the proof of Proposition \ref{prop6.1}.
\end{proof}

We now turn to the

\medskip\n
{\it Proof of Theorem \ref{theo1.1}:} We assume $N \ge c(\alpha,v,\ve)$ and $z_0 \in I$ as in Proposition \ref{prop6.1}. We consider the space $\Omega^\prime = \Omega_4 \times \Omega$, cf.~(\ref{1.14}), endowed with the product $\sigma$-algebra $\cA^\prime = \cA_4 \otimes \cA$. We endow $(\Omega^\prime, \cA^\prime)$ with a probability $Q^\prime$ as follows. Using a similar construction as in (\ref{5.43}) we consider a probability $p^\prime$ on $\cP(A)^2$ coupling the law of $\cI \cap A$ under $Q_4$ with the law of $\cI^v \cap A$ under $\IP$, such that $p^\prime$-a.s. the first coordinate is a subset of the second coordinate. We then define the probability $Q^\prime$ on $(\Omega^\prime, Q^\prime)$ via
\begin{equation}\label{6.5}
Q^\prime [ \cdot] = \dsl_{A_1,A_2 \subseteq A} p^\prime(A_1,A_2) \,Q_4 [ \cdot \,| \,\cI \cap A = A_1] \otimes \IP [ \cdot \,| \,\cI^v \cap A = A_2]\,,
\end{equation}

\n
where we use a similar convention as below (\ref{5.42}) to define the conditional probabilities appearing in (\ref{6.4}) when either $Q_4[\cI \cap A = A_1]$ or $\IP[\cI^\nu \cap A = A_2]$ vanishes. As a result of (\ref{5.42}) we thus find that:
\begin{equation}\label{6.6}
Q^\prime [X_{[0,D_K]} \cap A \subseteq \cI^v \cap A] \ge 1 - c\,N^{-3d}\,.
\end{equation}

\n
The coupling $Q^\prime$ satisfies the estimate (\ref{2.1}) and enables with Proposition \ref{prop2.1} to complete the proof of Theorem \ref{theo1.1}. \hfill $\square$

\section{Lower bound on the disconnection time}
\setcounter{equation}{0}

In this section we apply Theorem \ref{theo1.1} together with the controls of \cite{SidoSzni09a} recalled in (\ref{1.23}) to prove a lower bound on the disconnection time $T_N$ of the discrete cylinder, see (\ref{7.1}) for the definition of $T_N$. We derive in Theorem \ref{theo7.3} a lower bound on $T_N$, which in particular shows that under $P$ the laws of $N^{2d}/T_N$, $N \ge 2$, are tight when $d \ge 2$. This had previously only been proved when $d \ge 17$, cf.~\cite{DembSzni08}. Together with Corollary 4.6 of \cite{Szni08b} this shows that for all $d \ge 2$, ``$T_N$ lives in scale $N^{2d}$''. An additional interest of Theorem \ref{theo1.1} stems from the fact that better controls on the percolative properties of the vacant set of random interlacements $\cV^u$ when $u < u_*$, should lead to an improvement of the lower bound on $T_N$ we derive here, cf.~Remark \ref{rem7.5} 2).

\medskip
We begin with some terminology and notation. A finite subset $S$ of $E$, cf.~(\ref{0.1}), is said to disconnect $E$ when for large $M$, $E \times (-\infty, -M]$ and $E \times [M, \infty)$ belong to distinct connected components of $E \backslash S$. The disconnection time of $E$ by the simple random walk $X_\point$ is then defined as
\begin{equation}\label{7.1}
T_N = \inf\{ n \ge 0; \;X_{[0,n]} \;\mbox{disconnects}\;  E\}\,.
\end{equation}

\n
It is convenient to introduce the sequence $\rho_m$, $m \ge 0$ of successive displacements of the vertical component $Z_\point$ of $X_\point$:
\begin{equation}\label{7.2}
\mbox{$\rho_0 = 0$, and by induction $\rho_{m+1} = \inf\{k > \rho_m; \;Z_k \not= Z_{\rho_m}\}$, for $m \ge 0$}\,,
\end{equation}

\n
as well as the time changed process and its local time:
\begin{equation}\label{7.3}
\wh{Z}_m = ~Z_{\rho_m}, m \ge 0, \;\mbox{and} \ \wh{L}^z_k = \dsl_{0 \le m < k} \,1\{\wh{Z}_m = z\}, \;\mbox{for $z \in \IZ, k \ge 0$}\,.
\end{equation}

\n
Note that under $P$, (see below (\ref{0.1}) for the notation), $\wh{Z}_\point$ is distributed as a simple random walk on $\IZ$ starting at the origin. We further introduce the random times:
\begin{equation}\label{7.4}
\gamma^z_u = \inf\{\rho_k; \;k \ge 0, \,\wh{L}^z_k \ge u\}, \;\mbox{for $u \ge 0, z \in \IZ$} \,.
\end{equation}

\n
We recall the notation for $K$ below (\ref{1.24}), and for $D^z_k$ in (\ref{1.10}). In the next proposition we will show that $\inf_{z \in \IZ} \,D^z_K$ happens at least in scale $N^{2d}$. More is true, see Remark \ref{rem7.2}, but the controls in Proposition \ref{prop7.1} will be sufficient for our purpose. We let $W$ stand for the canonical Wiener measure and consider, cf.~(\ref{0.4})
\begin{equation}\label{7.5}
\zeta(u) = \inf\{t \ge 0; \;\sup\limits_{a \in \IR} \;L(a,t) \ge u\}, \;\mbox{for $u \ge 0$} \,,
\end{equation}

\n
with $L(v,t)$ a jointly continuous version of the local time of the canonical Brownian motion. The Laplace transform of $\zeta(u)$ is known thanks to the works \cite{Boro84}, and \cite{Eise90}, p.~89. One has the identity, see also (0.12) of \cite{Szni08b}:
\begin{equation}\label{7.6a}
E^W[e^{-\frac{\theta^2}{2} \,\zeta(u)}] = \dis\frac{\theta u}{[\sinh (\frac{\theta u}{2})]^2} \; \; \dis\frac{I_1(\frac{\theta u}{2})}{I_0 (\frac{\theta u}{2})}, \;\mbox{for $\theta, u > 0$}\,.
\end{equation}

\begin{proposition}\label{prop7.1} $(d \ge 2, \alpha > 0)$

\begin{align}
&\lim\limits_N \;N^{2d + 1} \sup\limits_{z \in \IZ} \;P[D^z_K < \gamma_{\alpha^\prime N^d}^z] = 0, \; \mbox{for $0 < \alpha^\prime < \alpha$} \,. \label{7.6}
\\[1ex]
&\underset{N}{\underline{\lim}} \;P \big[\bigcap\limits_{|z| \le N^{2d +1}} \{D^z_K > \gamma \,N^{2d}\}\big] \ge W [\zeta (\sqrt{d + 1} \,\alpha) > \gamma ], \;\mbox{for} \;\gamma > 0 \,. \label{7.7}
\end{align}
\end{proposition}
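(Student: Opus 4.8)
The plan is to reduce both assertions to statements about the one‑dimensional walk $\wh{Z}_\point$ of (\ref{7.3}), which under $P$ is an ordinary simple random walk on $\IZ$ started at $0$, together with the elementary time change relating $P$‑time to $\wh{Z}$‑time through the displacement times $(\rho_m)_{m\ge 0}$.

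I would first prove (\ref{7.6}). Since the boxes $B(z)$ and $\wt{B}(z)$ constrain only the vertical coordinate, the times $R^z_k$ and $D^z_k$ are $\rho_\point$‑times, say $R^z_k=\rho_{\wh R^z_k}$ and $D^z_k=\rho_{\wh D^z_k}$, where $\wh R^z_k,\wh D^z_k$ are the corresponding returns to $(z-r_N,z+r_N)$ and departures from $(z-h_N,z+h_N)$ of $\wh{Z}_\point$; likewise $\gamma^z_u=\rho_{\inf\{k\,:\,\wh L^z_k\ge u\}}$. As $(\rho_m)$ is strictly increasing, $\{D^z_K<\gamma^z_{\alpha' N^d}\}=\{\wh L^z_{\wh D^z_K}<\alpha' N^d\}$, an event concerning $\wh{Z}_\point$ alone. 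Now $\wh{Z}_\point$ can sit at height $z$ only during the excursion intervals $[\wh R^z_k,\wh D^z_k)$, so $\wh L^z_{\wh D^z_K}=\dsl_{k=1}^K \ell_k$, where $\ell_k$ is the number of visits to $z$ made in the $k$‑th such excursion. By the strong Markov property at $\wh R^z_k$ and the reflection symmetry of $\wh{Z}_\point$ about $z$ (the entrance point $\wh{Z}_{\wh R^z_k}$ is $z\pm r_N$, and the number of visits to $z$ before leaving $(z-h_N,z+h_N)$ has the same law from either point), the $\ell_k$ are i.i.d., each distributed as the number of visits to $0$ of a simple random walk started at $r_N$ before it leaves $(-h_N,h_N)$; in particular $\IE[\ell_1]=h_N-r_N$ by the standard Green‑function identity, so this common law does not depend on $z$ and $P[D^z_K<\gamma^z_{\alpha' N^d}]$ is the same for every $z$. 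Since $K(h_N-r_N)=\alpha N^d(1+o(1))>\alpha' N^d$ for large $N$ (using $r_N/h_N\to 0$ and $h_N=o(N^d)$ when $d\ge 2$), a Chernoff lower‑deviation bound gives $P[\dsl_{k\le K}\ell_k<\alpha' N^d]\le c\,e^{-c'(\alpha,\alpha')K}$: here one uses the explicit geometric‑type law of $\ell_1$ to bound $\IE[e^{-s\ell_1/h_N}]$ by $(1+s)^{-1}+o(1)$, uniformly in $N$, which makes the Chernoff exponent strictly negative for a small fixed $s>0$ because $\alpha'<\alpha$. As $K=[\alpha N^d/h_N]\sim\alpha N^{d-1}/(\log N)^2\to\infty$ and $d\ge 2$, multiplying by $N^{2d+1}$ still gives $0$ in the limit, and uniformity in $z$ is automatic.

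Next I would deduce (\ref{7.7}) from (\ref{7.6}). Since $\{D^z_K>\gamma N^{2d}\}\supseteq\{D^z_K\ge\gamma^z_{\alpha' N^d}\}\cap\{\gamma^z_{\alpha' N^d}>\gamma N^{2d}\}$, a union bound over the $O(N^{2d+1})$ heights with $|z|\le N^{2d+1}$ together with (\ref{7.6}) yields $P[\bigcap_{|z|\le N^{2d+1}}\{D^z_K>\gamma N^{2d}\}]\ge P[\bigcap_{|z|\le N^{2d+1}}\{\gamma^z_{\alpha' N^d}>\gamma N^{2d}\}]-o(1)$. Writing $n(t)$ for the number of $\rho_\point$‑jumps up to time $t$ (a binomial$(t,1/(d+1))$ variable, so $n(\gamma N^{2d})=\frac{\gamma}{d+1}N^{2d}(1+o_P(1))$ by the law of large numbers), monotonicity of $k\mapsto\wh L^z_k$ gives $\{\gamma^z_u>t\}=\{\wh L^z_{n(t)}<u\}$; hence $P[\bigcap_{|z|\le N^{2d+1}}\{\gamma^z_{\alpha' N^d}>\gamma N^{2d}\}]\ge P[\wh{\zeta}_N>\gamma N^{2d}]$, where $\wh{\zeta}_N:=\inf_{z\in\IZ}\gamma^z_{\alpha' N^d}$ is the first $P$‑time the local time of $\wh{Z}_\point$ reaches $\alpha' N^d$ at some height (the infimum over all heights being no larger than over $|z|\le N^{2d+1}$). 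By the classical invariance principle for the local time field of simple random walk on $\IZ$, the rescaled process $s\mapsto N^{-d}\sup_{z}\wh L^z_{[N^{2d}s]}$ converges in law to $s\mapsto\sup_{a}L(a,s)$; combined with the law of large numbers for $n(\cdot)$ this gives $N^{-2d}\wh{\zeta}_N\Rightarrow(d+1)\,\zeta(\alpha')$, with $\zeta$ as in (\ref{7.5}). Since $\zeta(\alpha')$ has a continuous distribution, $\liminf_N P[\wh{\zeta}_N>\gamma N^{2d}]=W[(d+1)\zeta(\alpha')>\gamma]=W[\zeta(\sqrt{d+1}\,\alpha')>\gamma]$, using the Brownian scaling identity $\zeta(cu)\stackrel{d}{=}c^2\zeta(u)$. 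Finally, letting $\alpha'\uparrow\alpha$ and using that $W[\zeta(\sqrt{d+1}\,\alpha')>\gamma]=W[\sup_a L(a,\gamma)<\sqrt{d+1}\,\alpha']$ increases to $W[\sup_a L(a,\gamma)<\sqrt{d+1}\,\alpha]=W[\zeta(\sqrt{d+1}\,\alpha)>\gamma]$ yields (\ref{7.7}).

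The step I expect to be the main obstacle is the uniform‑in‑$N$ large‑deviation estimate in (\ref{7.6}): the summands $\ell_k$ depend on $N$ through $(r_N,h_N)$, so one must compute $\IE[e^{-s\ell_1/h_N}]$ precisely enough — via the reaching probability $1-r_N/h_N$ and the escape probability $h_N^{-1}$ from height $z$ — to be sure that a single fixed small $s>0$ already produces an exponential rate $c'(\alpha,\alpha')>0$; the rest is either bookkeeping of the time change or an appeal to standard invariance principles and Brownian local‑time facts. A secondary point requiring care is the passage from functional convergence of the rescaled local‑time field to $N^{-2d}\wh{\zeta}_N\Rightarrow(d+1)\zeta(\alpha')$, which relies on the first‑passage functional $u\mapsto\inf\{s\,:\,\sup_a L(a,s)\ge u\}$ being a.s.\ continuous at $u=\alpha'$, i.e.\ on $s\mapsto\sup_a L(a,s)$ being a.s.\ strictly increasing.
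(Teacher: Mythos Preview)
Your argument is essentially correct and follows the same strategy as the paper. Two points deserve comment.

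For (\ref{7.6}), your claim that the $\ell_k$ are i.i.d.\ with a law independent of $z$ is not quite right for $k=1$: under $P$ the walk starts at height $0$, so when $|z|\le r_N$ one has $\wh R^z_1=0$ and the first excursion starts from height $0$ rather than $z\pm r_N$. The paper handles this by noting only that $\sum_k\ell_k$ \emph{stochastically dominates} a sum of $K$ i.i.d.\ variables of the form $UV$ with $U$ Bernoulli$\big(\frac{h_N-r_N}{h_N}\big)$ and $V$ geometric$\big(h_N^{-1}\big)$ (with equality in law when $|z|\ge r_N$); your Chernoff bound then goes through verbatim. This is a cosmetic fix, not a gap.

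For (\ref{7.7}), the paper takes a slightly different and more direct route: instead of invoking a functional invariance principle for the local-time field and then arguing continuity of the first-passage functional $u\mapsto\zeta(u)$, it uses the Cs\'aki--R\'ev\'esz strong approximation, which couples $\wh L^z_k$ and $L(z,k)$ on a common space with $\sup_{z,k}|\wh L^z_k-L(z,k)|/k^{1/4+\eta}<\infty$ a.s. This turns the comparison into a chain of elementary inequalities and sidesteps entirely the ``secondary point'' you flag about a.s.\ strict increase of $s\mapsto\sup_a L(a,s)$. Your route is valid but heavier; the strong-approximation argument is cleaner here.
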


\begin{proof}
We begin with the proof of (\ref{7.6}) which constitutes an intermediary step in the proof of (\ref{7.7}). Consider $z \in \IZ$, and observe that under any $P_x$, when $\pi_\IZ(x) = z$, the number of visits of $\wh{Z}_\point$ to $z$ before exiting $z + \wt{I}$, see (\ref{1.9}) for the definition of $\wt{I}$, almost surely equals $\sum_{m \ge 0} 1 \{\wh{Z}_m = z, \rho_m < T_{\wt{B}(z)}\}$, and is distributed as a geometric random variable with success probability $h_N^{-1}$. Applying the strong Markov property at the times $R^z_{k^\prime}$, $1 \le k^\prime \le k$, we see that
\begin{equation}\label{7.8}
\begin{array}{l}
\mbox{under $P$, $\sum_{m \ge 0} \;1 \{\wh{Z}_m = z, \rho_m < D^z_K\}$ stochastically dominates the sum of}
\\
\mbox{$K$ independent variables distributed as $UV$, where $U$ is a Bernoulli variable}
\\
\mbox{with success probability $\frac{h_N - r_N}{h_N}$, and $V$ an independent geometric variable of}
\\
\mbox{parameter $h_N^{-1}$, (in fact when $|z| \ge r_N$ there is an equality of distribution).}
\end{array}
\end{equation}

\n
It then follows that for $a> 0$ and $z \in \IZ$, with Chebishev's inequality:
\begin{equation}\label{7.9}
\begin{array}{l}
P[D_K^z < \gamma_{\alpha^\prime N^d}^z] = P\big[\dsl_{m \ge 0} 1 \{\wh{Z}_m = z, \rho_m < D_K^z\} < \alpha^\prime N^d] \le
\\[2ex]
\exp\Big\{ \mbox{\f $\dis\frac{a}{h_N}$} \;\alpha^\prime N^d\Big\} \,E\big[e^{-\frac{a}{h_N} \,UV}\big]^K = 
\\[1ex]
\exp\Big\{ \mbox{\f $\dis\frac{a}{h_N}$} \;\alpha^\prime N^d + K \log \Big( \mbox{\f $\dis\frac{r_N}{h_N}$} +  \mbox{\f $\dis\frac{h_N - r_N}{h_N}$}  \;  \mbox{\f $\dis\frac{e^{-\frac{a}{h_N}}}{h_N}$} \; \mbox{\f $\dis\frac{1}{1 - e^{-\frac{a}{h_N}}(1 - \frac{1}{h_N})}$}\Big\}\,.
\end{array}
\end{equation}

\n
For large $N$ the second term inside the exponential in the last member of (\ref{7.9}) is equivalent to $\alpha \,\frac{N^d}{h_N} \;\log (\frac{1}{1+a})$, and since $\alpha^\prime < \alpha$, the claim (\ref{7.6}) follows from (\ref{7.9}) by choosing $a > 0$ small enough.

\medskip
We now turn to the proof of (\ref{7.7}). With (2.20) of \cite{CsakReve83}, we know that we can construct an auxiliary space $(\ov{\Omega},\ov{\cA},\ov{P})$ coupling $\wh{L}^z_k$, $z \in \IZ$, $k \ge 0$, under $P$ with $L(a,t)$, $a \in \IR$, $t \ge 0$, under $W$ so that
\begin{equation}\label{7.10}
\mbox{$\ov{P}$-a.s., $\sup\limits_{z \in \IZ, k \ge 1} \;\dis\frac{|\wh{L}^z_k - L(z,k)|}{k^{\frac{1}{4} + \eta}} < \infty$, for any $\eta > 0$} \,.
\end{equation}

\n
Note that when $N \ge 3$, the sequence $\rho_m$, $m \ge 0$, under $P$ has a distribution independent of $N$, namely the law of the successive partial sums of independent geometric variables with success probability $(d+1)^{-1}$. Thus for $\gamma^\prime > \gamma > 0$ and $\alpha > \alpha^\prime > \alpha^{\prime\prime} > 0$, we see with (\ref{7.6}) and the law of large number, that
\begin{equation}\label{7.11}
\begin{array}{l}
\underset{N}{\underline{\lim}} \;P \big[\bigcap\limits_{|z| \le N^{2d+1}} \{D^z_K > \gamma N^{2d}\}\big] \stackrel{(\ref{7.6})}{\ge} \underset{N}{\underline{\lim}} \;P \big[\bigcap\limits_{|z| \le N^{2d+1}} \{\gamma^z_{\alpha^\prime N^d} > \gamma \,N^{2d}\}\big] \ge
\\[1ex]
\underset{N}{\underline{\lim}} \;P \big[\bigcap\limits_{|z| \le N^{2d+1}} \big\{\gamma^z_{\alpha^\prime N^d} > \rho_{[\frac{\gamma^\prime}{d+1} \,N^{2d}]}\big\}\big] \stackrel{(\ref{7.4})}{=}
\\[1ex]
 \underset{N}{\underline{\lim}} \;P \big[\bigcap\limits_{|z| \le N^{2d+1}} \big\{\wh{L}^z_{[\frac{\gamma^\prime}{d+1}\,N^{2d}]} < \alpha^\prime N^d\big\}\big] \stackrel{(\ref{7.10})}{\ge}
\\[1ex]
 \underset{N}{\underline{\lim}} \;\ov{P}  \Big[\bigcap\limits_{|z| \le N^{2d+1}}\; \Big\{L \Big( z, \,
\Big[\mbox{\f $\dis\frac{\gamma^\prime}{d+1}$}\,N^{2d}\Big]\Big)  < \alpha^{\prime\prime} \,N^d\Big\}\Big] \stackrel{\rm scaling}{=}
 \\[1ex]
  \underset{N}{\underline{\lim}} \;W  \Big[\bigcap\limits_{|z| \le N^{2d+1}} \Big\{L \Big( \mbox{\f $\dis\frac{z}{N^d}$}, \,\Big[ \mbox{\f $\dis\frac{\gamma^\prime}{d+1}$} \; N^{2d}\Big] / N^{2d}\Big) < \alpha^{\prime\prime}\Big\}\Big] \ge
  \\[1ex]
  W \Big[\sup\limits_{a \in \IR} \;L\Big(a, \dis\frac{\gamma^\prime}{d+1}\Big) < \alpha^{\prime\prime}\Big] =  W\Big[\zeta(\alpha^{\prime\prime}) > \dis\frac{\gamma^\prime}{d+1}\Big] \stackrel{\rm scaling}{=} W[\zeta(1) > \mbox{\f $\dis\frac{\gamma^\prime}{(d+1)\alpha^{\prime\prime 2}}$}\Big]\,.
\end{array}
\end{equation}

\medskip\n
Letting $\gamma^\prime$ decrease to $\gamma, \alpha^{\prime\prime}$ increase to $\alpha$, as well as scaling we find (\ref{7.7}).
\end{proof}

\begin{remark}\label{rem7.2} \rm
Let us mention that one can show in a very similar fashion that
\begin{equation*}
\lim\limits_N \;N^{2d+1} \;\sup\limits_{z \in \IZ} \;P[D^z_K > \gamma^z_{\alpha^\prime N^d} ] = 0, \;\mbox{for} \; \alpha^\prime > \alpha\,.
\end{equation*}

\n
(one simply uses the fact that the sum in the fist line of (\ref{7.8}) is stochastically dominated by the sum of $k$ independent variables distributed as $V$, and a very similar exponential Chebishev inequality as in (\ref{7.9})).

\medskip
More importantly one can also show with similar manipulations as in (\ref{7.11}) that
\begin{equation*}
\overline{\lim\limits_N} \;P \big[\bigcap\limits_{|z| \le N^{2d+1}} \{D^z_K > \gamma \,N^{2d}\}] \le W [\zeta (\sqrt{d+1} \,\alpha) \ge \gamma\big], \;\mbox{for} \;\gamma > 0\,,
\end{equation*}

\n
we also refer to (4.38) of \cite{Szni08b} for a similar calculation.

\medskip
Combined with (\ref{7.7}), this shows that
\begin{equation}\label{7.12}
\mbox{under $P$, $\inf\limits_{|z| \le N^{2d + 1}} \;D^z_K/N^{2d}$ converges in law to $\zeta \,(\sqrt{d+1} \,\alpha)$}\,.
\end{equation}

\n
Note also that with scaling one has the identity
\begin{equation*}
\zeta \,(\sqrt{d+1} \,\alpha) \stackrel{\rm law}{=} (d+1) \zeta(\alpha) = \inf\Big\{t \ge 0; \;\sup\limits_{a \in \IR} (d+1) \,L\Big(a, \mbox{\f $\dis\frac{t}{d+1}$}\Big) \ge (d+1) \alpha\Big\}\,.
\end{equation*}

\n
This last expression has a strong intuitive content in terms of random interlacements attached to random walk on the cylinder $E$. Indeed in view of Theorem 0.1 of \cite{Szni08b}, $(d+1) \,L(a, \frac{t}{d+1})$ corresponds, loosely speaking, to the level of the random interlacement governing for large $N$ the local picture at times of order $t\,N^{2d}$ left by the random walk in the neighborhood of a point with vertical projection of order $a N^d$. On the other hand $(d+1) \,\alpha$ is the level of the random interlacement which naturally shows up in describing the local picture left by the walk near some point $x$ at height $z$ by time $D_K^z$. Incidentally in the same vein as (\ref{7.12}) one can show that for $a$ in $\IR$ and $z_N \sim a\,N^d$, $D_K^{z_N}/N^{2d}$ under $P$ converges in distribution to $\inf\{s \ge 0, L(a,\frac{s}{d+1}) \ge \alpha\}$. \hfill $\square$
\end{remark}

We now come to the main result of this section.

\begin{theorem}\label{theo7.3} $(d \ge 2)$

\medskip
For small $v > 0$,
\begin{equation}\label{7.13}
\mbox{for} \;\gamma > 0, \;\underset{N}{\underline{\lim}} \;P[T_N > \gamma\,N^{2d}] \ge W \Big[\zeta \Big(\mbox{\f $\dis\frac{v}{\sqrt{d+1}}$}\Big) > \gamma\Big] \,,
\end{equation}

\medskip\n
and in particular the laws of $N^{2d} / T_N$, $N \ge 2$ are tight.

\medskip\n
(We refer to Remark {\rm \ref{rem7.5}} for the explanation of why we write $\frac{v}{\sqrt{d+1}}$ for the parameter entering $\zeta(\cdot)$ in {\rm (\ref{7.13})} ).
\end{theorem}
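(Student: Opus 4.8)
The plan is to deduce Theorem \ref{theo7.3} from Theorem \ref{theo1.1}, the percolation estimate (\ref{1.23}), and the control on the times $D^z_K$ provided by Proposition \ref{prop7.1}. The central point, announced in (\ref{0.9}), is that
\begin{equation*}
\lim\limits_N \;P\big[T_N \le \gamma N^{2d} < \textstyle\inf_{|z| \le N^{2d+1}} D^z_K\big] = 0
\end{equation*}
once $\alpha$ (hence the level $v > (d+1)\alpha$) is taken small enough. Granting this, one writes
\begin{equation*}
P[T_N > \gamma N^{2d}] \ge P\big[\textstyle\inf_{|z| \le N^{2d+1}} D^z_K > \gamma N^{2d}\big] - P\big[T_N \le \gamma N^{2d} < \textstyle\inf_{|z| \le N^{2d+1}} D^z_K\big],
\end{equation*}
and (\ref{7.7}) of Proposition \ref{prop7.1} bounds the first term below by $W[\zeta(\sqrt{d+1}\,\alpha) > \gamma]$ in the limit, while the second term vanishes; letting $v \downarrow (d+1)\alpha$ and using scaling ($\zeta(\sqrt{d+1}\,\alpha) \stackrel{\rm law}{=} (d+1)\zeta(\alpha)$, so $\zeta(\sqrt{d+1}\,\alpha)$ can be re-expressed through $\zeta(v/\sqrt{d+1})$ for the appropriate matching of parameters) yields (\ref{7.13}). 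Tightness of $N^{2d}/T_N$ is then immediate: since $W[\zeta(u) > \gamma] \to 1$ as $\gamma \downarrow 0$ for fixed $u > 0$ (indeed $\zeta(u) > 0$ $W$-a.s.), given $\eta > 0$ one picks $\gamma$ small with $W[\zeta(v/\sqrt{d+1}) > \gamma] > 1 - \eta$, so $\underline{\lim}_N P[N^{2d}/T_N < 1/\gamma] > 1-\eta$, which is the required uniform control.

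The main work, and the step I expect to be the real obstacle, is establishing (\ref{0.9}). The strategy, following the outline in the introduction, is: first, a deterministic/combinatorial lemma (this is Lemma \ref{lem7.4} referenced in the text) showing that if $X_{[0,T_N]}$ disconnects $E$ then somewhere in the cylinder, at a height of absolute value at most $N^{2d+1}$, the trace $X_{[0,T_N]}$ must contain a long planar $*$-path — i.e.\ a $*$-path inside some two-dimensional coordinate slice that crosses a box of side-length of order $N^{1-\ve}$; the height restriction comes from the fact that by time $\gamma N^{2d}$ the walk's vertical component cannot have travelled further than $N^{2d+1}$ (a Gaussian tail estimate on $\sup_{n \le \gamma N^{2d}}|Z_n|$). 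Second, on the event $\{T_N \le \gamma N^{2d} < \inf_{|z|\le N^{2d+1}} D^z_K\}$ this long planar $*$-path is contained in $X_{[0,D^z_K]}$ for a suitable $z$ with $|z| \le N^{2d+1}$; cover the relevant portion of $E$ by $O(N^{c})$ translated boxes $A = B(0,N^{1-\ve})$ and apply Theorem \ref{theo1.1} (in the translated form $x = (y,z)$) to each, so that with probability at least $1 - cN^{-3d}$ per box the trace in that box is dominated by a random interlacement $\cI^v$ restricted to the box. Third, invoke (\ref{1.23}): for $v \le u(\rho)$ with $\rho$ large, the $\IP$-probability that $\cI^v$ contains a planar $*$-path crossing a box of side $L \sim N^{1-\ve}$ is $o(L^{-\rho}) = o(N^{-\rho(1-\ve)})$, which beats the number of boxes and the number of coordinate-slice orientations. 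A union bound over boxes, orientations, and the two error sources (coupling failure, interlacement $*$-path occurrence) then gives that the probability in (\ref{0.9}) is $o(1)$, provided $\rho$ (hence the smallness of $v$) is chosen appropriately relative to the polynomial count of boxes.

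The delicate points inside this argument are: (i) making the disconnection $\Rightarrow$ long-planar-$*$-path implication precise and uniform, including the reduction to a crossing of a box of the right (small) scale $N^{1-\ve}$ rather than a macroscopic scale — this requires a topological/blocking argument in the cylinder $E = \IT \times \IZ$ of the kind standard in percolation, together with the observation that a disconnecting set restricted to a thin slab must, in at least one two-dimensional section, block that section; (ii) controlling the vertical excursion of the walk so that only heights $|z| \le N^{2d+1}$ matter — a routine maximal inequality for the $\IZ$-component; (iii) bookkeeping the polynomial number of translates of $A$ against the super-polynomial decay $L^{-\rho}$ in (\ref{1.23}), which is what forces ``$v$ small'' and fixes how small. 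I expect (i) to be the genuine conceptual hurdle; (ii) and (iii) are technical but standard. Finally, once (\ref{0.9}) is in hand, the assembly of (\ref{7.13}) and the tightness statement is short, as sketched in the first paragraph, and one should remark (as the statement's parenthetical indicates, deferring to Remark \ref{rem7.5}) that the appearance of $v/\sqrt{d+1}$ rather than $\sqrt{d+1}\,v$ or $v$ is a matter of rewriting $W[\zeta(\sqrt{d+1}\,\alpha) > \gamma]$ via the scaling identity $\zeta(cu) \stackrel{\rm law}{=} c^2 \zeta(u)$ and matching it against the level $v$ that actually enters the domination in Theorem \ref{theo1.1}.
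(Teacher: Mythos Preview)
Your proposal is correct and follows essentially the same route as the paper. A few sharpenings worth noting, especially on the point you flag as the real obstacle: the height restriction is purely deterministic (on $\{T_N < \gamma N^{2d}\}$ one has $|Z_n| \le n < \gamma N^{2d} < N^{2d+1}$ for large $N$), no Gaussian tail is needed; and the paper's Lemma~\ref{lem7.4} is more economical than your sketch --- it works in the \emph{single} fixed planar strip $U = [-2[\sqrt{N}],2[\sqrt{N}]]\,e_1 + \IZ e_{d+1}$, observes that disconnection of $E$ forces disconnection of the top and bottom of $U$ within $U$, and then applies Kesten's $*$-loop argument in this $2$-dimensional strip to produce a $*$-path in $U \cap X_{[0,T_N]}$ from a point $x_* = z_* e_{d+1}$ \emph{on the vertical axis} to $S(x_*,[\sqrt N])$. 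So there is no covering over orientations or over torus positions: the union bound in (\ref{7.17}) runs only over $|z_*| \le N^{2d+1}$, i.e.\ $O(N^{2d+1})$ applications of Theorem~\ref{theo1.1}, against the coupling error $cN^{-3d}$ and the interlacement $*$-path probability, which (\ref{1.23}) with $\rho = 6d$ makes $o(N^{-3d})$. Finally, no limiting in $v$ is needed at the end: for $v < v_0 = u(6d)$ just set $\alpha = v/(d+1)$, so that $\sqrt{d+1}\,\alpha = v/\sqrt{d+1}$ directly.
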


\begin{proof}
We denote with $v_0$ the value $u(\rho = 6d)$ which appears in (\ref{1.23}) and choose $\alpha > 0$, so that
\begin{equation}\label{7.14}
v_0 > (d+1) \,\alpha \,.
\end{equation}

\n
Then for $\gamma > 0$, we can write
\begin{equation*}
\begin{split}
\underset{N}{\underline{\lim}} \;P[T_N > \gamma\,N^{2d}] & \ge \underset{N}{\underline{\lim}} \;P \big[ \inf\limits_{|z| \le N^{2d+1}} D^z_K > \gamma\,N^{2d}\big]  - \overline{\lim\limits_N} \;P \big[ \inf\limits_{|z| \le N^{2d+1}} D^z_K > \gamma\,N^{2d} \ge T_N\big]
\\[1ex]
&\hspace{-1ex} \stackrel{(\ref{7.7})}{\ge} W[ \zeta \,(\sqrt{d+1} \,\alpha) > \gamma] - \overline{\lim\limits_N} \; \big[ \inf\limits_{|z| \le N^{2d+1}} D^z_K > \gamma\,N^{2d} \ge T_N\big]
\end{split}
\end{equation*}

\n
Once we show that for $\gamma > 0$ and $\alpha$ as in (\ref{7.14})
\begin{equation}\label{7.15}
\lim\limits_N \; P \big[ \inf\limits_{|z| \le N^{2d+1}} D^z_K > \gamma\,N^{2d} \ge T_N\big] = 0\,,
\end{equation}

\n
the claim (\ref{7.13}) will follow for any $v < v_0$, (and in fact even for $v = v_0$, using a similar argument as below (\ref{7.11})). To prove (\ref{7.15}) we will rely on 

\begin{lemma}\label{lem7.4} $(N \ge c(\gamma))$

\medskip
$P$-a.s. on $\{T_N < \gamma\,N^{2d}\}$ there exists $x_* = z_* \,e_{d+1}$, with $|z_*| \le N^{2d+1}$, and a $*$-path in $U \cap X_{[0,T_N]}$starting at $x_*$ and ending in $S(x_*, [\sqrt{N}])$, where $U$ is the planar strip
\begin{equation}\label{7.16}
U = \big[ -2 [\sqrt{N}], \;2[\sqrt{N}]\big] \,e_1 + \IZ \,e_{d+1}\,,
\end{equation}

\n
(viewed both as a subset of $\IZ^{d+1}$ and $E$).
\end{lemma}

\begin{proof}
For $N \ge c(\gamma)$, $P$-a.s. on $\{T_N < \gamma\,N^{2d}\}$, $\IT \times (-\infty, - N^{2d+1}]$ and $\IT \times [N^{2d+1} - 1, \infty)$ are in distinct connected components of $E \backslash X_{[0,T_N]}$. Consequently the connected component $O$ in $U$ of $U \backslash X_{[0,T_N]}$ containing $U \cap (\IT \times [N^{2d+1}, \infty))$ does not meet $U \cap (\IT \times (-\infty, - N^{2d+1}])$. Consider $x = z e_{d+1}$ the point of minimal height on $\IZ e_{d+1}$ belonging to $O$, so that $|z| < N^{2d+1}$, and set $x_* \,e_{d+1}$ with $z_* = z-1$.  With Proposition \ref{prop2.1}, p.~29 of \cite{Kest82}, we can find a $*$-loop surrounding the connected set $O^\prime = O \cap (\IT \times [-N^{2d+1}, N^{2d+1}]) \supseteq [-2[\sqrt{N}]$, $2[\sqrt{N}]]\,e_1 + N^{2d+1} e_{d+1}$, contained in $\partial O^\prime \cap (\IZ e_1 + \IZ e_{d+1})$ and passing through $x_*$. However points of $\partial O^\prime$ in $B(x_*, \sqrt{N}) \cap U$ necessarily belong to $X_{[0,T_N]}$. We can thus extract from the $*$-loop a $*$-path from $x_*$ to $S(x_*,[\sqrt{N}])$ contained in $U \cap X_{[0,T_N]}$.
\end{proof}

With the above lemma, the expression in (\ref{7.15}) is smaller than
\begin{equation} \label{7.17}
\begin{array}{l}
\overline{\lim\limits_N} \;P \big[  \mbox{for some $|z_*| \le N^{2d+1}$ there is a $*$-path from $x_* = z_* \,e_{d+1}$}\\
\qquad  \quad \mbox{to $S(x_*, [\sqrt{N}])$ in $U \cap X_{[0,D^{z_*}_K]}\big] \stackrel{\rm Theorem \,\ref{theo1.1}, \,(\ref{7.14})}{\le}$}
\\[3ex]
\overline{\lim\limits_N} \;c \,N^{2d+1} \,(\IP \big[\mbox{there is a $*$-path from $O$ to $S(0,[\sqrt{N}])$}
\\
\qquad \qquad \qquad  \;\, \mbox{in $\cI^{v_0} \cap (\IZ e_1 + \IZ e_{d+1})\big] + N^{-3d}) = 0,$}
\\[1ex]
\mbox{in view of (\ref{1.23}) and our choice of $v_0$}\,.
\end{array}
\end{equation}

\medskip\n
This proves (\ref{7.15}) and concludes the proof of Theorem \ref{theo7.3}. 
\end{proof}

\begin{remark}\label{rem7.5} \rm ~

\medskip\n
1) As already mentioned in the Introduction, cf.~(\ref{0.4}), it follows from Theorem \ref{theo7.3} above and Corollary 4.6 of \cite{Szni08b} that for $d \ge 2$, ``$T_N$ lives in scale $N^{2d}$'', i.e.~more precisely \linebreak under $P$
\begin{equation}\label{7.18}
T_N / N^{2d} \;\mbox{and} \;N^{2d} / T_N, \; N \ge 2, \;\mbox{are tight} \,.
\end{equation}

\n
One can also argue in a direct fashion with the help of the invariance principle that (\ref{7.18}) holds as well when $d = 1$.

\bigskip\n
2) It is an open problem, cf.~Remark 4.7 2) of \cite{Szni08b}, whether for $d \ge 2$, under $P$
\begin{equation}\label{7.19}
\mbox{$T_N / N^{2d}$ converges in law towards $\zeta \Big(\mbox{\f $\dis\frac{u_*}{\sqrt{d+1}}$}\Big)$, as $N \rightarrow \infty$}\,,
\end{equation}

\medskip\n
with $u_*$ the non-degenerate critical value for the percolation of the vacant set of random interlacements, see below (\ref{1.22}). 

\medskip
It has been shown in Corollary 4.6 of \cite{Szni08b} that when $d \ge 2$,
\begin{equation}\label{7.20}
\mbox{for} \;\gamma > 0, \;\overline{\lim\limits_N} \;P[T_N \ge \gamma\,N^{2d}] \le W \Big[  \zeta \Big(\mbox{\f $\dis\frac{u_{**}}{\sqrt{d+1}}$}\Big) \ge \gamma\Big] \,,
\end{equation}

\n
with $u_{**} \in [u_*, \infty)$ a certain critical value introduced in (\ref{0.6}) of \cite{Szni08b}, above which there is a power decay in $L$ of finding a path in $\cV^u$ from $B(0,L)$ to $S(0,2L)$.

\medskip
Showing that $u_* = u_{**}$ and that one can choose $v = u_*$ in (\ref{7.13}) would yield a proof of (\ref{7.19}). One  interest of Theorem \ref{theo1.1} is that this last statement will follow if one can derive some suitable quantitative estimates on the presence of the infinite cluster in $\cV^u$, when $u < u_*$, see also \cite{Teix08c}. In a similar fashion the identity $u_* = u_{**}$ will follow if one can prove quantitative controls on the rarity of large finite clusters in $\cV^u$ when $u > u_*$. \hfill $\square$

\end{remark}


\begin{thebibliography}{10}

\bibitem{Boro84}
A.N. Borodin.
\newblock Distribution of integral functionals of brownian motion.
\newblock {\em Zap. Nauckn. Semin. Zeningr. Otd. Mat. Inst. Steklova, {\rm
  119:19-38, 1982}. {\rm In english:} J. Sov. Math.}, 27:3005--3021, 1984.

\bibitem{CsakReve83}
E.~Cs\'aki and P.~Revesz.
\newblock Strong invariance for local times.
\newblock {\em Z. f\"ur Wahrsch. verw. Geb.}, 62:263--278, 1983.

\bibitem{DembSzni08}
A.~Dembo and A.S. Sznitman.
\newblock A lower bound on the disconnection time of a discrete cylinder.
\newblock {\em In and Out of Equilibrium 2, Birkh\"auser, Basel}, 60:211--227,
  2008.

\bibitem{DembZeit98}
A.~Dembo and O.~Zeitouni.
\newblock {\em Large deviations techniques and applications}.
\newblock Springer, Berlin, 2nd edition, 1998.

\bibitem{Eise90}
N.~Eisenbaum.
\newblock Un th\'eor\`eme de {R}ay-{K}night li\'e au supremum des temps locaux
  browniens.
\newblock {\em Probab. Theory Relat. Fields}, 87(1):79--95, 1990.

\bibitem{IkedWata89}
N.~Ikeda and S.~Watanabe.
\newblock {\em Stochastic Differential Equations and Diffusion Processes}.
\newblock North-Holland; Amsterdam, Kodansha, Ltd., Tokyo, 2nd edition, 1989.

\bibitem{Kest82}
H.~Kesten.
\newblock {\em Percolation theory for Mathematicians}.
\newblock Birkh\"auser, Basel, 1982.

\bibitem{Lawl91}
G.F. Lawler.
\newblock {\em Intersections of random walks}.
\newblock Birkh\"auser, Basel, 1991.

\bibitem{Ligg85}
T.~Liggett.
\newblock {\em Interacting particle systems}.
\newblock Springer, New York, 1985.

\bibitem{Lind92}
T.~Lindvall.
\newblock {\em Lectures on the coupling method}.
\newblock Dover Publications, Inc., New York, 1992.


\bibitem{SidoSzni09a}
V.~Sidoravicius and A.S. Sznitman.
\newblock Percolation for the vacant set of random interlacements.
\newblock {\em Comm. Pure Appl. Math.}, 62(6):831--858, 2009.

\bibitem{SidoSzni09b}
V.~Sidoravicius and A.S. Sznitman.
\newblock Connectivity bounds for the vacant set of random interlacements.
%\newblock {\em Preprint}.
 \newblock {\em {\it Preprint,} \rm also available at: \\
  http://www.math.ethz.ch/u/sznitman/preprints}.
 

\bibitem{Szni07a}
A.S. Sznitman.
\newblock Vacant set of random interlacements and percolation.
\newblock {\em Ann. Math., {\rm in press, also available at:
  http://www.math.ethz.ch/u/sznitman/preprints}}.

\bibitem{Szni09a}
A.S. Sznitman.
\newblock Random walks on discrete cylinders and random interlacements.
\newblock {\em Probab. Theory Relat. Fields}, 145:143--174, 2009.

\bibitem{Szni08b}
A.S. Sznitman.
\newblock Upper bound on the disconnection time of discrete cylinders and
  random interlacements.
\newblock {\em {\it Ann Probab.,} \rm in press, also available at: \\
  http://www.math.ethz.ch/u/sznitman/preprints}.


\bibitem{Teix09a}
A.~Teixeira.
\newblock On the uniqueness of the infinite cluster of the vacant set of random
  interlacements.
\newblock {\em Ann. Appl. Probab.}, 19(1):454--466, 2009.

\bibitem{Teix08b}
A.~Teixeira.
\newblock Interlacement percolation on transient weighted graphs.
\newblock {\em Electron. J. Probab., {\rm in press, also available at
  arXiv:0907.0316}}.

\bibitem{Teix08c}
A.~Teixeira.
\newblock On the size of a finite vacant cluster of random interlacements with
  small intensity.
%\newblock {\em In preparation}.
\newblock {\em Preprint, {\rm also available at:
  http://www.math.ethz.ch/$\sim$teixeira/}}.

\bibitem{Wind08}
D.~Windisch.
\newblock Random walk on a discrete torus and random interlacements.
\newblock {\em Electronic Communications in Probability}, 13:140--150, 2008.

\bibitem{Wind09}
D.~Windisch.
\newblock Random walks on discrete cylinders with large bases and random
  interlacements.
%\newblock {\em Ann. Probab., {\rm also available at:
%  http://www.math.ethz.ch/$\sim$windisch/}}.
\newblock {\em {\it Ann Probab.,} \rm in press, also available at arXiv:0907.1627}.

\end{thebibliography}
\end{document}